\title{Enveloping classes over commutative rings}
\author[S.~Bazzoni]{Silvana Bazzoni}
\address[Silvana Bazzoni]{%
Dipartimento di Matematica ``Tullio Levi-Civita'' \\
Universit\`a di Padova \\
Via Trieste 63, 35121 Padova (Italy)}
\email{bazzoni@math.unipd.it}
\author[G.~Le Gros]{Giovanna Le Gros}
\address[Giovanna Le Gros]{%
Dipartimento di Matematica ``Tullio Levi-Civita'' \\
Universit\`a di Padova \\
Via Trieste 63, 35121 Padova (Italy)}
\email{giovannagiulia.legros@math.unipd.it}
\subjclass[2010]{{13B30, 13C60, 13D07, 18E40}}
\keywords{Envelopes, flat ring epimorphism, $1$-tilting classes}
\thanks{Research supported by grants BIRD163492 and DOR1690814 of
  Padova University}
\begin{document}

\begin{abstract} 
%
Given a $1$-tilting cotorsion pair over a commutative ring, we characterise the rings over which the $1$-tilting class is an enveloping class. To do so, we consider the faithful finitely generated Gabriel topology $\G$ associated to the $1$-tilting class $\T$ over a commutative ring as illustrated by Hrbek. We prove that a $1$-tilting class $\T$ is enveloping if and only if $\G$ is a perfect Gabriel topology (that is, it arises from a perfect localisation) and $R/J$ is a perfect ring for each $J \in \G$, or equivalently $\G$ is a perfect Gabriel topology and the discrete quotient rings of the topological ring $\mathfrak R=\End (R_\G/R)$ are perfect rings where $R_\G$ denotes the ring of quotients with respect to $\G$. Moreover, if the above equivalent conditions hold it follows that $\pdim R_\G \leq 1$ and $\T$ arises from a flat ring epimorphism.

 \end{abstract}

\maketitle


\section{Introduction}

The classification problem for classes of modules over arbitrary rings is in general very difficult, or even hopeless. Nonetheless, approximation theory was developed as a tool to approximate arbitrary modules by modules in classes where the classification is more manageable.
Left and right approximations were first studied for finite dimensional modules by Auslander, Reiten, and Smal\o \ and by Enochs and Xu for modules over arbitrary rings using the terminology of preenvelopes and precovers.


An important problem in approximation theory is when minimal approximations, that is covers or envelopes, over certain classes exist. In other words, for a certain class $\C$, the aim is to characterise the rings over which every module has a minimal approximation in $\C$ and furthermore to characterise the class $\C$ itself. The most famous positive result of when minimal approximations exist is the construction of an injective envelope for every module. Instead, Bass proved in~\cite{Bass}  that projective covers rarely exist. In his paper, Bass introduced and characterised the class of perfect rings which are exactly the rings over which every module admits a projective cover.
Among the many characterisations of perfect rings, the most important from the homological point of view is the closure under direct limits of the class of projective modules.

 
 A class $\C$ of modules is called covering, respectively enveloping, if every module admits a $\C$-cover, respectively a $\C$-envelope.

A cotorsion pair $(\A, \B)$ admits (special) $\A$-precovers if and only if it admits (special) $\B$-preenvelopes. This observation lead to the notion of complete cotorsion pairs, that is cotorsion pairs admitting approximations.

Results by Enochs and Xu (\cite[Theorem 2.2.6 and 2.2.8]{Xu}) show that a complete cotorsion pair $(\A, \B)$ such that $\A$ is closed under direct limits admits both $\A$-covers and $\B$-envelopes. 
Note that in the case of the cotorsion pair $(\clP_0, \Modr R)$, where $\clP_0$ is the class of projective modules, Bass's results state that $\clP_0$ is a covering class if and only if $\clP_0$ is closed under direct limits.

 
  In this paper we are interested in the conditions under which a class $\C$ is enveloping. We will deal with classes of modules over commutative rings and in particular with $1$-tilting classes.
  
   An important starting point is the bijective correspondence between faithful finitely generated Gabriel topologies $\G$ and $1$-tilting classes over commutative rings established by Hrbek in~\cite{H}. The tilting class can then be characterised as the class $\D_\G$ of $\G$-divisible modules, that is, the modules $M$ such that $JM=M$ for every $J\in \G$. 
 
 We prove in Section~\ref{S:tilting-enveloping} that if a $1$-tilting class is enveloping, then $R_\G$, the ring of quotients with respect to the Gabriel topology $\G$, is $\G$-divisible, so that $R\to R_\G$ is a flat injective ring epimorphism. 
     
  It is well known that every flat ring epimorphism gives rise to a finitely generated Gabriel topology. We will consider the case of a flat injective ring epimorphism $u\colon R\to U$ between commutative rings and show that if the module $R$ has a $\D_\G$-envelope, then $U$ has projective dimension at most one.
  

From results by Angeleri H\"ugel and S{\'a}nchez~\cite{AS} and also \cite[Proposition 5.4]{H}, we infer that the module $U\oplus K$, where $K$ is the cokernel of $u$, is a $1$-tilting module with $\D_\G$ the associated tilting class. In other words, $\D_\G$ arises from the perfect localisation $u$, so it coincides with the class of modules generated by $U$, that is epimorphic images of direct sums of copies of $U$ or also with $K^\perp$, the right Ext-orthogonal of $K$. 
Assuming furthermore that the class $\D_\G$ is enveloping, we prove that all the quotient rings $R/J$, for $J\in \G$ are perfect rings and so are all the discrete quotient rings of the topological ring $\mathfrak R=\End(K)$ (Theorems ~\ref{T:rjperfect} and ~\ref{T:EndK-properfect}). In the terminology of Positselski and Bazzoni-Positselski (for example \cite{BP2}) this means that $\mathfrak R$ is a pro-perfect topological ring.  
Moreover,  the converse holds, that is if $\mathfrak R=\End(K)$ is a pro-perfect topological ring then the class of $\G$-divisible modules is enveloping (Theorem~\ref{T:characterisation}).
 
In conclusion, we obtain that a $1$-tilting class over a commutative ring is enveloping if and only if it arises from a flat injective ring epimorphism with associated Gabriel topology $\G$ such that the factor rings $R/J$ are perfect rings for every $J\in \G$ (Theorem~\ref{T:tilting-envelope}).
This provides a partial answer to Problem 1 of~\cite[Section 13.5]{GT12} and generalises the result proved in~\cite{B} for the case of commutative domains and divisible modules.   

Applying results from~\cite[Section 19]{BP2} or \cite[Section 13]{BP4}, we obtain that $\Add (K)$, the class of direct summands of direct sums of copies of $K$, is closed under direct limits.
Since $\D_\G$ coincides with the right Ext-orthogonal of $\Add (K)$, we have an instance of the necessity of the closure under direct limits of a class whose right Ext-orthogonal admits envelopes.

Therefore in our situation we prove a converse of the result by Enochs and Xu (\cite[Theorem 2.2.6]{Xu}) which states that if a class $\A$ of modules is closed under direct limits and extensions and whose right Ext-orthogonal $\A^\perp$ admits special preenvelopes with cokernel in $\A$, then $\A^\perp$ is enveloping.

  The case of a non-injective flat ring epimorphism $u\colon R\to U$ is easily reduced to the injective case, since the class of $\G$-divisible modules is annihilated by the kernel $I$ of $u$, so all the results proved for $R$ apply to the ring $R/I$ and to the cokernel $K$ of  $u$.   
%


 The paper is organised as follows.
 After the necessary preliminaries, in Section~\ref{S:envelope} we state some general results concerning properties of envelopes with respect to  classes of modules.
    
     In Section~\ref{S:gab-top} we recall the notion of a Gabriel topology and outline the properties of the related ring of quotients. 
In Subsection~\ref{SS:gab-prop} we provide some of our own results for general Gabriel topologies which we will use in the later sections. Next in Subsection~\ref{SS:gab-tilt} we review the relationship between Gabriel topologies and $1$-tilting classes as well as silting classes as done by Hrbek. Finally in Subsection~\ref{SS:homological} we recall the classical notion of a Gabriel topology arising from a perfect localisation, as well as a lemma. 
     
     In Section~\ref{S:tilting-enveloping}, we consider a $1$-tilting class over a commutative ring and its associated Gabriel topology via Hrbek's results ~\cite{H}. We prove that if the $1$-tilting class is enveloping, then the ring of quotients with respect to the Gabriel topology $\G$, $R_\G$, is $\G$-divisible, hence $\G$ arises from a flat injective ring epimorphism $\psi_R \colon R \to R_\G$.
   
In Section~\ref{S:compl-endK} we introduce the completion of a ring with respect to a Gabriel topology and the endomorphism ring of a module as a topological ring.
Considering the particular case of a perfect localisation corresponding to a flat injective ring epimorphism $u\colon R\to U$ between commutative rings, we show the isomorphism between the completion of $R$ with respect to the associated Gabriel topology and the topological ring $\mathfrak R=\End (K)$.
  
  In the main Sections~\ref{S:enveloping} and ~\ref{S:properfect}, we prove a ring theoretic and topological characterisation of commutative rings for which the class of $\G$-divisible modules is enveloping where $\G$ is the Gabriel topology associated to a flat injective ring epimorphism. Namely, the characterisation in terms of perfectness of the factor rings $R/J$, for every $J\in \G$ and the pro-perfectness of the topological ring $\mathfrak R=\End(K)$.

In Section~\ref{S:notmono} we extend the results proved in Sections~\ref{S:enveloping} and ~\ref{S:properfect} to the case of a non-injective flat ring epimorphism

\section{Preliminaries}
The ring $R$ will always be associative with a unit and $\Modr R$ the category of right $R$-modules.
 
 Let $\C $ be a class of right $R$-modules. The right $\Ext^1_R$-orthogonal and right $\Ext^\infty_R$-orthogonal classes of $\C$ are defined as follows. \[%
\C ^{\perp_1} =\{M\in \Modr R \ | \ \Ext_R^1(C,M)=0  \ {\rm for \
all\ } C\in \C\} 
\]
\[\C^\perp = \{M\in \Modr R \ | \ \Ext_R^i(C,M)=0  \ {\rm for \
all\ } C\in \C, \ {\rm for \
all\ } i\geq 1 \}\]
The left Ext-orthogonal classes ${}^{\perp_1} \C$ and ${}^\perp \C$ are defined symmetrically. 

If the class $\C$ has only one element, say $\C = \{X\}$, we write $X^{\perp_1}$ instead of $\{X\}^{\perp_1}$, and similarly for the other $\Ext$-orthogonal classes.

%
%

We will now recall the notions of
$\C$-preenvelope, special $\C$-preenvelope and $\C$-envelope for a class $\C$ of $R$-modules. 

\begin{defn} Let $\C$ be a class of modules, $N$ a right $R$-module and $C\in \C$. A homomorphism $\mu\in \Hom_R(N, C)$ is called a
$\C$-{\sl preenvelope} (or left approximation) of $N$ if for every homomorphism $f' \in \Hom_R(N, C')$ with $C'\in \C$ there exists a homomorphism
$f\colon C\to C'$ such that $f '=
 f  \mu$.

A $\C$-preenvelope $\mu\in \Hom_R(N, C)$ is called a $\C$-{\sl envelope} (or a minimal left approximation) of $N$ if for every endomorphism $f$ of $C$ such that  $f \mu=
\mu$,
$f$ is an automorphism of $C$. 


A $\C$-preenvelope  $\mu$ of $N$ is said to be {\sl special} if it is a monomorphism  and $\Coker \mu\in {}^{\perp_1} \C$.
\end{defn}

The notions of $\C$-{\sl precover} (right approximation), {\sl special} $\C$-{\sl precover} and of  $\C$-{\sl cover} (minimal right approximation) (see \cite{Xu}) are defined dually.


A class $\C$ of $R$-modules is called \emph{enveloping} (\emph{covering}) if every module admits a $\C$-envelope ($\C$-cover).

A pair of classes of modules $(\A, \B)$ is a \emph{%
cotorsion pair} provided that $\mbox{$\A$} = {}^{\perp_1}
\mbox{$\B$}
$ and $\mbox{$\B$} = \mbox{$\A$} ^{\perp_1}$. 

We consider  preenvelopes  and envelopes for particular classes of modules, that is classes which form the right-hand class of a cotorsion pair.

A cotorsion pair $(\A, \B)$ is \emph{complete} provided that every $R$-module $M$ admits a {special $\B$-preenvelope} or equivalently, every $R$-module $M$ admits a {special $\A$-precover}. 

Results by Enochs and Xu (\cite[Theorem 2.2.6 and 2.2.8]{Xu}) show that a complete cotorsion pair $(\A, \B)$ such that $\A$ is closed under direct limits admits both $\B$-envelopes and $\A$-covers.

A cotorsion pair $(\A, \B)$ is {\it hereditary} if for every $A \in \A$ and $B \in \B$, $\Ext^i_R(A, B)=0$ for all $i \geq 1$.
\\
 Given a class $\mathcal{C}$ of modules, the pair 
$(^{\perp }(\C^{\perp}),\C^{\perp})$ is a (hereditary) cotorsion pair called the cotorsion pair \emph{generated} by $\C$, while $(^{\perp }\C, (^{\perp }\C) ^{\perp})$ is a (hereditary) cotorsion pair called the cotorsion pair \emph{cogenerated} by $\C$.

 Examples of complete cotorsion pairs are abundant. In fact, by \cite[Theorem
10]{ET01} or \cite[Theorem 6.11]{GT12} a cotorsion pair generated by a set of
modules is complete. 
\\
%
%

For an $R$-module $C$, we let $\Add(C)$ denote the class of $R$-modules which are direct summands of direct sums of copies of $C$, and $\Gen(C)$ denote the class of $R$-modules which are homomorphic images of direct sums of copies of $C$.
\\

We now define $1$-tilting and silting modules. 
\\
A right $R$-module $T$ is {\it $1$-tilting} (\cite{CT}) if the following conditions hold.
\begin{enumerate}
\item[(T1)] $\pdim T \leq1$.
\item[(T2)]  $\Ext_R^i (T, T^{(\kappa)}) =0$ for every cardinal $\kappa$ and every $i >0$.
\item[(T3)] There exists an exact sequence of the following form where each $T_i$ is in $\Add(T)$.
\[
0 \to R \to T_0 \to T_1 \to 0
\]
\end{enumerate}
Equivalently, $T$ is $1$-tilting if and only if $T^{\perp_1} = \Gen(T)$ (\cite[Proposition 1.3]{CT}). The cotorsion pair $({}^\perp(T^{\perp}), T^\perp)$ is called a {\it $1$-tilting cotorsion pair} and the torsion class $T^\perp$ is called a {\it $1$-tilting class}. Two $1$-tilting modules $T$ and $T'$ are {\it equivalent} if they define the same $1$-tilting class $T^\perp = T'^\perp$
(equivalently, if $\Add (T)=\Add(T')$).
\\


%
%

A $1$-tilting class can be generalised in the following way. For a homomorphism $\sigma:P_{-1} \to P_0$ between projective modules in $\ModR$, consider the following class of modules.
\[
D_\sigma := \{ X \in \ModR: \Hom_R( \sigma,X) \text{ is surjective}\}
\] 
An $R$-module $T$ is said to be \textit{silting} if it admits a projective presentation 
\[
P_{-1} \overset{\sigma}\to P_0 \to T \to 0
\]
such that $\Gen(T) = D_\sigma$. In the case that $\sigma$ is a monomorphism, $\Gen(T)$ is a $1$-tilting class.
\\

A ring $R$ is {\it left perfect} if every module in $\RMod$ has a projective cover. By \cite{Bass}, $R$ is left perfect if and only if all flat modules in $\RMod$ are projective.

An ideal $I$ of $R$ is said to be {\it left T-nilpotent} if for every sequence of elements $a_1, a_2, ..., a_i, ...$ in $I$, there exists an $n >0$ such that $a_1 a_2 \cdots a_n =0$.
The following proposition for the case of commutative perfect rings is well known.
\begin{prop} \label{P:perfect}
Suppose $R$ is a commutative ring. The following statements are equivalent for $R$.
\begin{itemize}
\item[(i)] $R$ is perfect
\item[(ii)] The Jacobson radical $J(R)$ of $R$, is T-nilpotent and $R/J(R)$ is semi-simple. 
\item[(iii)] $R$ is a finite product of local rings, each one with a T-nilpotent maximal ideal.
\end{itemize}
\end{prop}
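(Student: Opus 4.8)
The statement to prove is Proposition~\ref{P:perfect}, the characterisation of commutative perfect rings. I would treat this as a known result and reduce it to Bass's original theorem rather than reproving everything from scratch.

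\medskip

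The plan is to prove the equivalences in the order $(i)\Leftrightarrow(ii)$ and then $(ii)\Leftrightarrow(iii)$, exploiting commutativity to simplify Bass's general theorem. For $(i)\Leftrightarrow(ii)$, I would invoke Bass's theorem (\cite{Bass}) in its standard form: a ring $R$ is left perfect if and only if $R/J(R)$ is semisimple and $J(R)$ is left T-nilpotent. Since $R$ is commutative there is no left/right distinction, so this is exactly condition (ii); this direction is essentially a citation. The only genuine content is then the passage between (ii) and (iii).

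\medskip

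For $(ii)\Rightarrow(iii)$: if $R/J(R)$ is semisimple and commutative, it is a finite product of fields, say $R/J(R)\cong \prod_{k=1}^n F_k$. The idempotents of this product lift modulo the T-nilpotent (hence nil) ideal $J(R)$ to a complete set of orthogonal idempotents $e_1,\dots,e_n$ of $R$, giving $R\cong \prod_{k=1}^n R e_k$ where each $R e_k$ is a commutative ring with $R e_k / J(R e_k)\cong F_k$ a field, so each $R e_k$ is local. Its maximal ideal is $J(R)e_k$, which is a direct summand of the T-nilpotent ideal $J(R)$ and hence itself T-nilpotent. For $(iii)\Rightarrow(ii)$: a finite product $R=\prod_{k=1}^n R_k$ of local rings has $J(R)=\prod_k \m_k$ and $R/J(R)=\prod_k R_k/\m_k$, a finite product of fields, hence semisimple; and $J(R)$ is T-nilpotent because a sequence $a_1,a_2,\dots$ in $\prod_k\m_k$ projects to sequences in each $\m_k$, each of which dies after finitely many steps by hypothesis, so after $\max$ of these finitely many indices the product vanishes. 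The main (and only mildly delicate) obstacle is the idempotent-lifting step: one must check that orthogonal idempotents can be lifted along a nil ideal and that this yields a ring-direct-product decomposition; this is standard (T-nilpotent implies nil, and idempotents lift modulo nil ideals), so no real difficulty arises.

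\medskip

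In short, the argument is: cite Bass for $(i)\Leftrightarrow(ii)$, then handle $(ii)\Leftrightarrow(iii)$ by combining the structure of commutative semisimple rings (finite products of fields), lifting of idempotents modulo the nil ideal $J(R)$, and the elementary behaviour of T-nilpotence under finite products and direct summands. I would keep the write-up brief, since all ingredients are classical.
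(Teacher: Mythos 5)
Your argument is correct, and in fact the paper offers no proof at all here: it simply records this characterisation as a well-known fact (it is Bass's Theorem P specialised to commutative rings). Your reduction — citing Bass for (i)$\Leftrightarrow$(ii), then passing between (ii) and (iii) via the decomposition of a commutative semisimple ring into a finite product of fields, lifting the finitely many orthogonal idempotents modulo the nil ideal $J(R)$, and checking that T-nilpotence passes to and from finite products — is the standard derivation and is sound as written.
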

The following fact will be useful.
Let $_RF$ be a left $R$-module $_SG_R$ be an $S$-$R$-bimodule such that $\Tor_1^R(G, F)=0$. Then, for every left $S$-module $M$  there is an injective map  of abelian groups
\[\Ext^1_R(F, \Hom_S(G, M))\hookrightarrow\Ext^1_S(G\otimes_RF, M)).\]

\section{Envelopes}\label{S:envelope}
In this section we discuss some useful results in relation to envelopes. 

The following result is crucial in connection with the existence of envelopes.
\begin{prop}\label{P:Xu-env} \cite[Proposition 1.2.2]{Xu} Let $\C$ be a class of modules and assume that a module $N$ admits a $\C$-envelope. If  $\mu\colon N\to C$ is a $\C$-preenvelope of $N$, then $C=C'\oplus H$ for some submodules $C'$ and $H$ such that 
the composition $N\to C\to C'$ is a $\C$-envelope of $N$.
\end{prop}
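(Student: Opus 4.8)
The statement to prove is \cite[Proposition 1.2.2]{Xu}: if $N$ admits a $\C$-envelope and $\mu\colon N\to C$ is any $\C$-preenvelope, then $C$ splits as $C'\oplus H$ with $N\to C\to C'$ a $\C$-envelope.

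The plan is to exploit the universal property of the preenvelope $\mu$ together with the defining automorphism-property of the envelope to manufacture the splitting. First I would fix a $\C$-envelope $\nu\colon N\to E$, which exists by hypothesis. Applying the preenvelope property of $\mu$ to the map $\nu$, there is $g\colon C\to E$ with $\nu=g\mu$; applying the preenvelope property of $\nu$ (an envelope is in particular a preenvelope) to the map $\mu$, there is $f\colon E\to C$ with $\mu=f\nu$. Composing, $\nu=g f\nu$, so by the envelope (minimality) property of $\nu$ the endomorphism $gf$ of $E$ is an automorphism. Hence $f$ is a split monomorphism and $g$ is a split epimorphism: writing $h=(gf)\inv g$, we get $hf=\id_E$, so $E$ is (isomorphic to) a direct summand of $C$, say $C=f(E)\oplus H$ where $H=\Ker h$.

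Next I would check that the restriction of $\mu$ to its corestriction onto the summand $C'\colon=f(E)$ is a $\C$-envelope of $N$. Since $\mu=f\nu$ and $f$ is a monomorphism onto $C'$, the composite $N\to C\to C'$ is, up to the isomorphism $f\colon E\to C'$, just $\nu$; as $\nu$ is a $\C$-envelope and $C'\cong E$ lies in $\C$ (it is a direct summand of $C\in\C$ — here one uses that the relevant classes $\C$ of interest are closed under direct summands, or more precisely one argues directly that $f$ followed by the inverse iso identifies the two maps), $N\to C'$ is a $\C$-envelope. Concretely: given $f'\colon N\to C''$ with $C''\in\C$, factor it through $\nu$ and then transport along $f$; and given an endomorphism of $C'$ fixing the composite, transport it to an endomorphism of $E$ fixing $\nu$, which must then be an automorphism.

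The one genuinely delicate point — and the step I expect to be the main obstacle — is verifying that the projection $p\colon C\to C'$ (with kernel $H$) composed with $\mu$ really is the map $N\to C'$ we want, i.e. that $p\mu$ coincides with $f\nu$ viewed as landing in $C'$, and that this does not depend on the ambiguity in choosing the complement $H$; this is a matter of tracking the idempotent $fh$ carefully and observing $fh\mu=f(gf)\inv g f\nu = f(gf)\inv(gf)\nu = f\nu=\mu$, so that $\mu$ already factors through the summand $C'=\Img(fh)$ and $p\mu=\mu$ corestricted. Once this identity is in hand, the two bullet-point verifications of the envelope axioms for $N\to C'$ are routine diagram chases, and the proposition follows. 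I would also remark that no hypothesis on $\C$ beyond the existence of a single $\C$-envelope of $N$ is needed, since $C'\cong E\in\C$ automatically.
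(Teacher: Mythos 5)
Your argument is correct and is essentially the standard proof of \cite[Proposition 1.2.2]{Xu}, which the paper cites without reproducing: one obtains $f\colon E\to C$ and $g\colon C\to E$ from the two (pre)envelope properties, uses minimality of the envelope $\nu$ to make $gf$ an automorphism, splits off $C'=f(E)$ via the idempotent $fh$ with $h=(gf)\inv g$, and checks $fh\mu=\mu$ so that $\mu$ corestricted to $C'$ is $\nu$ transported along the isomorphism $f\colon E\to C'$. The only hypothesis on $\C$ you implicitly use is closure under isomorphism (not under direct summands), which is the standard convention for classes of modules, so the proof is complete.
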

We will consider $\C$-envelopes where $\C$ is a class closed under direct sums and therefore we will make use of the following result which is strongly connected with the notion of T-nilpotency of a ring.
\begin{thm}\label{T:Xu-sums}\cite[Theorem 1.4.4 and 1.4.6]{Xu}
\begin{enumerate}
\item[(i)] Let $\C$ be a class closed under countable direct sums. Assume that for every $n\geq 1$, $\mu_n\colon M_n\to C_n$ are $\C$-envelopes of $M_n$ and that $\oplus_nM_n$ admits a $\C$-envelope.
 Then $\oplus \mu_n\colon \oplus_nM_n\to \oplus_nC_n$ is a $\C$-envelope of $\oplus_nM_n$.
 \item[(ii)] Assume that $\oplus \mu_n\colon \oplus_nM_n\to \oplus_nC_n$ is a $\C$-envelope of $\oplus_nM_n$ with $M_n\leq C_n$ and let $f_n\colon C_n\to C_{n+1}$ be a family of homomorphisms such that $f_n(M_n)=0$. Then, for every $x\in C_1$ there is an integer $m$ such that $f_m  f_{m-1} \dots  f_1(x)=0$.
 \end{enumerate}
 \end{thm}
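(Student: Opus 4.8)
The plan is to treat the two parts separately: \textbf{(ii)} is a direct unpacking of the envelope property, while \textbf{(i)} combines Proposition~\ref{P:Xu-env} with the minimality of the individual $\mu_n$ through a left T‑nilpotency type elimination. For \textbf{(i)}, write $\iota_n\colon C_n\hookrightarrow\bigoplus_k C_k$ and $p_n\colon\bigoplus_k C_k\to C_n$ for the structure maps, put $\mu:=\bigoplus_n\mu_n$, and set $\psi_{mn}:=p_m\psi\iota_n$ for $\psi\in\operatorname{End}(\bigoplus_k C_k)$. First I would note that $\bigoplus_n C_n\in\C$ and that $\mu$ is a $\C$-preenvelope of $\bigoplus_n M_n$: any map $\bigoplus_n M_n\to C'$ with $C'\in\C$ restricts on each $M_n$ to a map factoring through $\mu_n$, and these factorizations assemble. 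Since $\bigoplus_n M_n$ admits a $\C$-envelope, Proposition~\ref{P:Xu-env} applies to $\mu$; replacing the complement it produces by the image of a splitting, one gets a decomposition $\bigoplus_n C_n=E\oplus H$ with $\mu(\bigoplus_n M_n)\subseteq E$ and the induced map $\bigoplus_n M_n\to E$ a $\C$-envelope. Let $e$ be the idempotent with image $E$ and kernel $H$; then $e\mu=\mu$, so $e_{nn}\mu_n=\mu_n$ and hence $e_{nn}\in\operatorname{Aut}(C_n)$ by minimality of $\mu_n$, while $e_{mn}\mu_n=0$ for $m\neq n$. Putting $D:=\bigoplus_n e_{nn}\in\operatorname{Aut}(\bigoplus_n C_n)$ and $\alpha:=D^{-1}e$, one checks $\ker\alpha=\ker e=H$ and $\alpha=\mathrm{id}+\tau$ with $\tau_{nn}=0$ and $\tau_{mn}\mu_n=0$ for all $m,n$; so it remains to show $\mathrm{id}+\tau$ is injective, since then $H=0$ and $\mu$ is itself a $\C$-envelope.

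To do so, suppose $(\mathrm{id}+\tau)(x)=0$ for some $x$ with finite support $S=\{n_1,\dots,n_r\}$. The $S$-components of this equation form a finite linear system in the $x_{n_i}$ all of whose coefficients annihilate the relevant $\operatorname{im}\mu_{n_j}$, and I would solve it by eliminating $x_{n_1},x_{n_2},\dots$ one at a time: at the step removing $x_{n_i}$ one divides by an operator $\mathrm{id}_{C_{n_i}}-u$, where $u$ is an endomorphism of $C_{n_i}$ annihilating $\operatorname{im}\mu_{n_i}$ — invertible precisely because $\mu_{n_i}$ is a $\C$-envelope — and a short bookkeeping argument shows that after each substitution every remaining off-diagonal coefficient still annihilates the corresponding $\operatorname{im}\mu_{n_j}$ and every diagonal operator still has the form $\mathrm{id}-u$ with $u$ annihilating $\operatorname{im}\mu_{n_j}$. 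The final equation is then $(\mathrm{id}_{C_{n_r}}-u)x_{n_r}=0$ with $u$ annihilating $\operatorname{im}\mu_{n_r}$, so $x_{n_r}=0$, and back-substitution yields $x=0$. (This works for arbitrary, not just countable, direct sums; the countable hypothesis is only what the later applications need.)

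For \textbf{(ii)}, now $\mu_n$ is the inclusion $M_n\le C_n$ and $\mu=\bigoplus_n\mu_n$ is a $\C$-envelope. I would define $s\in\operatorname{End}(\bigoplus_n C_n)$ by $s\iota_n:=\iota_{n+1}f_n$ on each summand — well defined, since each column of $s$ is finitely supported — and set $\phi:=\mathrm{id}-s$. For $m\in M_n$ one has $\phi(\iota_n m)=\iota_n m-\iota_{n+1}(f_n m)=\iota_n m$ because $f_n(M_n)=0$, so $\phi\mu=\mu$, and therefore $\phi$ is an automorphism of $\bigoplus_n C_n$, in particular surjective. Given $x\in C_1$, pick $y=(y_1,\dots,y_M,0,\dots)$ with $\phi(y)=\iota_1(x)$; comparing components gives $y_1=x$ and $y_k=f_{k-1}(y_{k-1})$ for $k\ge2$, hence $y_k=f_{k-1}\cdots f_1(x)$ for all $k$, and $y_{M+1}=0$ gives $f_M f_{M-1}\cdots f_1(x)=0$, which is the assertion with $m=M$.

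I expect the injectivity of $\mathrm{id}+\tau$ in part (i) to be the main obstacle. The hypothesis that $\bigoplus_n M_n$ admits a $\C$-envelope, used via Proposition~\ref{P:Xu-env}, only exhibits that envelope as a direct summand of $\bigoplus_n C_n$; forcing the complement to vanish is exactly where one must exploit the minimality of each $\mu_n$, via the elimination above. This is the left T‑nilpotency mechanism which, run in the other direction, also drives part (ii); part (ii) itself is a purely formal consequence of the definition once the endomorphism $\mathrm{id}-s$ is in hand.
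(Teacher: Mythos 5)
Your proposal is correct. Note that the paper itself gives no argument for this statement -- it is quoted verbatim from Xu's book (Theorems 1.4.4 and 1.4.6) -- so there is no in-paper proof to compare against; your reconstruction is essentially the standard one. Part (ii) is exactly the classical shift argument: the endomorphism $\mathrm{id}-s$ fixes $\mu$ because $f_n(M_n)=0$, hence is an automorphism, and solving $(\mathrm{id}-s)(y)=\iota_1(x)$ for a finitely supported $y$ forces $f_m\cdots f_1(x)=0$. Part (i) also checks out: $\oplus\mu_n$ is a preenvelope, Proposition~\ref{P:Xu-env} plus the factorisation $g\colon C'\to\oplus_nC_n$ with $\pi g$ an automorphism yields an idempotent $e$ with $e\mu=\mu$, the componentwise identities $e_{nn}\mu_n=\mu_n$ and $e_{mn}\mu_n=0$ ($m\neq n$) follow, and your elimination is sound because at every step the diagonal operators have the form $\mathrm{id}-u$ with $u\mu_{n_i}=0$, so $(\mathrm{id}-u)\mu_{n_i}=\mu_{n_i}$ and minimality of $\mu_{n_i}$ makes them invertible, while the substitutions preserve both invariants; injectivity of $\mathrm{id}+\tau$ then kills $H=\ker e$, so $e=\mathrm{id}$ and $\oplus\mu_n$ is the envelope. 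One small wording slip: in invoking Proposition~\ref{P:Xu-env} it is the summand $C'$, not the complement $H$, that you replace by the image of the splitting $g$ (the complement $\ker\pi$ can be kept); the construction you clearly intend is the right one and the rest of the argument is unaffected. Your remark that only finitely many indices are ever manipulated, so the argument extends beyond countable sums given the corresponding closure hypothesis, is also accurate.
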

%
 For a complete cotorsion pair $(\A, \B)$, we investigate the properties of $\B$-envelopes of arbitrary $R$-modules. First of all we state two straightforward lemmas. 

\begin{lem}\label{L:endomorph-env} Let
$0\to N\overset{\mu}\to B\overset{\pi}\to A\to 0$
be an exact sequence. Let $f$ be an endomorphism of $B$ such that $\mu = f  \mu$. Then $f(B)\supseteq \mu(N)$ and $\Ker f\cap \mu(N)=0$.
\end{lem}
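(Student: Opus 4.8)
The plan is to argue directly from the defining relation $\mu = f\mu$. For the inclusion $f(B) \supseteq \mu(N)$: given any $n \in N$, we have $\mu(n) = (f\mu)(n) = f(\mu(n)) \in f(B)$, so $\mu(N) \subseteq f(B)$. This is immediate and requires nothing beyond the hypothesis.

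For the second assertion, suppose $x \in \Ker f \cap \mu(N)$. Write $x = \mu(n)$ for some $n \in N$ (using that $\mu$ is injective, this $n$ is unique, though we only need existence). Then $0 = f(x) = f(\mu(n)) = (f\mu)(n) = \mu(n) = x$, where the third equality uses $f\mu = \mu$. Hence $x = 0$, so $\Ker f \cap \mu(N) = 0$.

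There is essentially no obstacle here; the statement is a formal consequence of $f\mu = \mu$, and the exactness of the sequence is only used to identify $\mu(N)$ as an honest submodule of $B$ (equivalently, that $\mu$ is monic). I would present it in two short sentences as above.
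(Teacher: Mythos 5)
Your proof is correct, and the paper in fact omits any proof of this lemma, labelling it straightforward; your two-line argument from $f\mu=\mu$ is exactly the intended one. Nothing is missing.
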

\begin{lem}\label{L:identity-env}  Let 
$0\to N\overset{\mu}\rightarrow B\overset{\pi}\to A\to 0$
be an exact sequence.  For every endomorphism $f$ of $B$, the following are equivalent
\begin{enumerate}
\item[(i)] $\mu = f  \mu$.
\item[(ii)] The restriction of $f$ to $\mu(N)$ is the identity of $\mu(N)$.
\item[(iii)] There is a homomorphism $g\in \Hom_R(A,B)$ such that $f=id_B-g  \pi$.
\end{enumerate}
 \end{lem}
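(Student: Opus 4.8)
The plan is to prove the implications $(i)\Leftrightarrow(ii)$ first and then $(i)\Leftrightarrow(iii)$, since $(ii)$ is essentially an unpacking of $(i)$ and $(iii)$ is the one that requires the exactness of the sequence. For $(i)\Rightarrow(ii)$: if $\mu = f\mu$, then for every $n\in N$ we have $f(\mu(n)) = (f\mu)(n) = \mu(n)$, so $f$ restricted to $\mu(N)$ is the identity map of $\mu(N)$. For $(ii)\Rightarrow(i)$: conversely, if $f$ restricts to the identity on $\mu(N)$, then $f(\mu(n)) = \mu(n)$ for all $n$, which is precisely the equation $f\mu = \mu$. This part is immediate and requires nothing beyond the definitions.

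For $(iii)\Rightarrow(i)$: suppose $f = \id_B - g\pi$ for some $g\in\Hom_R(A,B)$. Then $f\mu = \mu - g\pi\mu = \mu - g\cdot 0 = \mu$, using that $\pi\mu = 0$ because the sequence is a complex (indeed exact). So $(i)$ holds. The only implication with any content is $(i)\Rightarrow(iii)$. Here I would argue as follows: given $f$ with $f\mu = \mu$, consider the endomorphism $h := \id_B - f$ of $B$. Then $h\mu = \mu - f\mu = 0$, so $h$ vanishes on $\mu(N) = \Ker\pi$. By the universal property of the cokernel — that is, since $\pi\colon B\to A$ is a cokernel of $\mu$ and $h$ kills $\mu(N)$ — there is a unique $R$-homomorphism $g\colon A\to B$ with $h = g\pi$. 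Rearranging gives $f = \id_B - g\pi$, as required. This uses the surjectivity of $\pi$ together with $\Ker\pi = \mu(N)$, i.e.\ the exactness of the given sequence at $B$ and $A$.

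There is no genuine obstacle here; the lemma is routine diagram-chasing. The only point to be careful about is that in the step $(i)\Rightarrow(iii)$ one needs $\pi$ to be surjective (so that $g$ is well-defined on all of $A$, not merely on $\Img\pi$) and one needs $\Ker\pi\subseteq\mu(N)$ (so that $h$ factors through $\pi$); both are guaranteed by the hypothesis that $0\to N\to B\to A\to 0$ is exact. I would present the argument in the cyclic order $(i)\Rightarrow(ii)\Rightarrow(i)$ and $(i)\Rightarrow(iii)\Rightarrow(i)$, or equivalently just verify the three two-way equivalences directly as above.
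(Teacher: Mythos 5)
Your proposal is correct and follows essentially the same route as the paper: the equivalence of (i) and (ii) is immediate, and for (i)$\Leftrightarrow$(iii) both arguments rewrite $f\mu=\mu$ as $(\id_B-f)\mu=0$ and factor $\id_B-f$ through the cokernel $\pi$ using exactness. Your write-up merely spells out the two directions of that factorization more explicitly than the paper does.
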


\begin{proof} (i) $\Leftrightarrow$ (ii) This is clear.

(i) $\Leftrightarrow$ (iii)  $\mu = f  \mu$ if and only if $ (id_B-f)  \mu=0$, that is if and only if 
$\mu(N)$ is contained in $\Ker (id_B-f)$. Equivalently, there exists $g\in \Hom_R(A,B)$ such that $id_B-f=g \pi  $.
\end{proof}

 \begin{prop}\label{P:B-envelopes} Let $(\A, \B)$ be a complete cotorsion pair over a ring $R$.  Assume that 
$0\to N\overset{\mu}\to B$ is a $\B$-envelope of the $R$-module $N$. Let $\alpha$ be an automorphism of $N$ and let $\beta $ be any endomorphism of $B$ such that $\beta\mu=\mu\alpha$. Then $\beta$ is an automorphism of $B$.
\end{prop}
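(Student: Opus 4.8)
The plan is to reduce the claim to the minimality property of the envelope, which says that an endomorphism $f$ of $B$ with $f\mu = \mu$ must be an automorphism. The natural strategy: from the given $\beta$ with $\beta\mu = \mu\alpha$, manufacture an endomorphism fixing $\mu(N)$ pointwise, to which minimality applies, and then deduce that $\beta$ itself is invertible.

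First I would use completeness of the cotorsion pair together with Proposition~\ref{P:Xu-env}: since $N$ has a $\B$-envelope and $0\to N\overset{\mu}\to B$ is in fact a $\B$-preenvelope (one checks, or it is part of the hypothesis, that $\mu$ is a special preenvelope with cokernel in $\A$, so $B\in\B$), it is already the envelope up to a summand; by hypothesis we are told it \emph{is} the envelope, so we may use the defining minimality condition directly. Now consider $\alpha\inv$, an automorphism of $N$. Because $\mu$ is a $\B$-preenvelope and $B\in\B$, the map $\mu\alpha\inv\colon N\to B$ factors through $\mu$: there is an endomorphism $\gamma$ of $B$ with $\gamma\mu = \mu\alpha\inv$. (Here I am using that a preenvelope lets every map to an object of $\B$ factor through it — applied to $\mu\alpha\inv\in\Hom_R(N,B)$.)

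Next I would compute $\beta\gamma$: we have $(\beta\gamma)\mu = \beta(\gamma\mu) = \beta\mu\alpha\inv = \mu\alpha\alpha\inv = \mu$. By the envelope (minimality) property, $\beta\gamma$ is an automorphism of $B$. Symmetrically, $(\gamma\beta)\mu = \gamma(\beta\mu) = \gamma\mu\alpha = \mu\alpha\inv\alpha = \mu$, so $\gamma\beta$ is also an automorphism of $B$. From $\beta\gamma$ and $\gamma\beta$ both invertible it follows by a standard argument that $\beta$ is invertible: $\beta\gamma$ invertible gives $\beta$ surjective (epi) and $\gamma$ injective (split mono); $\gamma\beta$ invertible gives $\gamma$ surjective and $\beta$ injective; hence $\beta$ is both injective and surjective, so an automorphism of $B$. (Alternatively: $\beta$ has a left inverse $(\gamma\beta)\inv\gamma$ and a right inverse $\gamma(\beta\gamma)\inv$, and a two-sided-invertible-by-one-sided argument finishes it.)

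The only real point to get right is the factorization step — that $\mu\alpha\inv$ genuinely factors through $\mu$ — which requires $B\in\B$ so that the preenvelope property of $\mu$ applies with target $B$; this is where completeness/the standing hypothesis that $\mu$ is a (special) $\B$-preenvelope is used. Everything after that is the formal manipulation with the minimality axiom and the elementary fact that if $\beta\gamma$ and $\gamma\beta$ are both isomorphisms then $\beta$ is an isomorphism. I do not anticipate a genuine obstacle; the statement is essentially the observation that envelopes are unique up to a compatible isomorphism, transported along the automorphism $\alpha$ of the source.
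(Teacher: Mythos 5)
Your proof is correct, and it is genuinely simpler and more general than the one in the paper. Both arguments begin the same way: the map $\gamma$ you produce with $\gamma\mu=\mu\alpha\inv$ is exactly the map $g$ in the paper's proof (there obtained by factoring $\mu$ through the envelope $\mu\alpha$), and both proofs use the composite $\gamma\beta$ (resp.\ $g\beta$), which fixes $\mu$, to conclude via minimality that $\beta$ is a monomorphism. The divergence is in proving surjectivity: the paper invokes Wakamatsu's Lemma to place the cokernel $A$ of $\mu$ in $\A$, identifies $B/\mu(N)$ with $A$, lifts along $\Ext^1_R(A,\beta(B))=0$ to build an endomorphism $f=\id_B-h\pi$ fixing $\mu$, and deduces $B=f(B)\subseteq\beta(B)$. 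You instead observe that the \emph{other} composite $\beta\gamma$ also fixes $\mu$, hence is an automorphism, which forces $\beta$ to be surjective; together with injectivity this finishes the proof. Your route never uses completeness of the cotorsion pair, Wakamatsu's Lemma, or Ext-orthogonality — only the preenvelope factorization property (which needs $B\in\B$, automatic from the definition of a $\B$-envelope) and the minimality condition, applied twice. In particular your argument proves the statement for an arbitrary class $\C$ in place of the right-hand class of a complete cotorsion pair, whereas the paper's argument is tied to the cotorsion-pair setting. The one place where your write-up slightly overcomplicates matters is the opening appeal to completeness and Proposition~\ref{P:Xu-env}: neither is needed, since the hypothesis that $\mu$ is a $\B$-envelope already gives $B\in\B$ and the factorization of $\mu\alpha\inv$ through $\mu$.
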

\begin{proof}  By Wakamatsu's Lemma (see \cite[Lemma 2.1.2]{Xu}), $\mu$ induces an exact sequence \[0\to N\overset{\mu}\to B\overset{\pi}\to A\to 0\]  with $A\in \A$. Since $\alpha$ is an automorphism of $N$, it is easy to show that \[0\to N\overset{\mu\alpha}\to B\to A\to 0\] is a $\B$-envelope of $N$. Let $\beta$ be as assumed and consider an endomorphism $g$ of $B$ such that $g\mu\alpha=\mu$. Then $g\beta\mu=\mu$ and thus $g\beta$ is an automorphism of $B$, since $\mu$ is a $\B$-envelope. This implies that $\beta$ is a monomorphism so that $\beta(B)\in \B$. Since $\mu(N)\subseteq \beta(B)$ there is an epimorphism $\tau\colon B/\mu(N)\to B/\beta(B)$, where $B/\mu(N)$ can be identified with $A$.  Consider the diagram:
\[\xymatrix{
0\ar[r]&
{\beta(B)}\ar[r]&B\ar[r]^{\rho} & B/\beta(B)\ar[r]&0\\
&& A\ar@{-->}[u]^{h} \ar[ur]_{\tau}}
\]
where $\rho$ is the canonical projection and $\tau\pi=\rho$. It can be completed by $h$, since $\Ext_R^1(A, \beta(B))=0$. Consider the homomorphism $f=id_B-h\pi$. $f$ is an endomorphism of $B$ satisfying $f\mu=\mu$. By assumption $f$ is an isomorphism, hence, in particular$ f(B)=B$. 

Now, $\rho f=\rho-\rho h \pi= \rho-\tau\pi=0$. Hence $ f(B)\subseteq \Ker \rho= \beta(B)$; so $\beta(B)=B$ and $\beta$ is an isomorphism.
\end{proof}

\section{Gabriel topologies} \label{S:gab-top}
In this section we briefly introduce Gabriel topologies and discuss some advancements that relate Gabriel topologies to $1$-tilting classes and silting classes over commutative rings as done in \cite{H} and \cite{AHHr}. Furthermore, we include some of our own results Subsection~\ref{SS:gab-prop}. For a more detailed discussion on torsion pairs and Gabriel topologies, refer to \cite[Chapters VI and IX]{Ste75}.

We will start by giving definitions in the case of a general ring with unit (not necessarily commutative). 

Recall that a {\it torsion pair} $(\E, \F)$ in $\ModR$ is a pair of classes of modules in $\ModR$ which are mutually orthogonal with respect to the $\Hom$-functor and maximal with respect to this property.
 The class $\E$ is called a {\it torsion class} and $\F$ a {\it torsion-free class}.

 A class $\C$ of modules is a { torsion class} if and only if it is closed under extensions, direct sums, and epimorphic images. A torsion pair $(\E, \F)$ is called {\it hereditary} if $\E$ is also closed under submodules.
 
 A torsion pair  $(\E, \F)$ is {\it generated} by a class $\C$ if $\F$ consists of all the modules $F$ such that $\Hom_R(C, F)=0$ for every $C\in \C$.

A {\it (right) Gabriel topology} on $R$ is a filter of right ideals of $R$, denoted $\G$, such that the following conditions hold. Recall that for a right ideal $I$ in $R$ and an element $t \in R$, $(I:t) := \{r \in R: tr \in I\}$.
\begin{itemize}
\item[(i)] If $I \in \G$ and $r \in R$ then $(I:r) \in \G$.
\item[(ii)] If $J$ is a right ideal of $R$ and there exists a $I \in \G$ such that $(J:t) \in \G$ for every $t \in I$, then $J \in \G$.
\end{itemize}
Right Gabriel topologies on $R$ are in bijective correspondence with hereditary torsion pairs in $\ModR$. Indeed, to each right Gabriel topology $\G$, one can associate the following hereditary torsion class.
\[
\E_\G = \{ M \mid \Ann_R (x) \in \G \text{ for every } x \in M\}
\]
Then, the corresponding torsion pair $(\E_\G, \F_\G)$ is generated by the cyclic modules $R/J$ where $J \in \G$. The classes $\E_\G$ and $\F_\G$ are referred to as the {\it $\G$-torsion} and {\it $\G$-torsion-free} classes, respectively.

Conversely, if $(\E, \F)$ is a hereditary torsion pair in $\ModR$, the set \[\{J\leq R\mid R/J\in \E\}\] is a right Gabriel topology.

 For a right $R$-module $M$ let $t_\G(M)$ denote the $\G$-torsion submodule of $M$, or sometimes $t(M)$ when the Gabriel topology is clear from context.

The {\it module of quotients} of the Gabriel topology $\G$ of a right $R$-module $M$ is the module
\[
M_\G := \varinjlim_{\substack{J \in \G}} \Hom_R(J, M/t_\G(M)).
\]
Furthermore, there is a canonical homomorphism 
\[\psi_M: M\cong \Hom_R(R, M) \to M_\G.\]
By substituting $M=R$, the assignment gives a ring homomorphism $\psi_R:R \to R_\G$ and furthermore, for each $R$-module $M$ the module $M_\G$ is both an $R$-module and an $R_\G$-module. Both the kernel and cokernel of the map $\psi_M$ are $\G$-torsion $R$-modules, and in fact $\Ker(\psi_M) = t_\G(M)$.

Let $q: \ModR \to \ModR_\G$ denote the functor that maps each $M$ to its module of quotients. Let $\psi^\ast$ denote the right exact functor $\ModR \to \ModR_\G$ where $\psi^\ast(M):= M \otimes R_\G$. In general, there is a natural transformation $\Theta: \psi^\ast \to q$ with $\Theta_M:M\otimes R_\G \to M_\G$ which is defined as $m \otimes \eta \mapsto \psi_M(m) \cdot \eta$. That is, for every $M$ the following triangle commutes.
\[
(\star)\qquad \qquad \xymatrix{
M \ar[rr]^{\psi^\ast(M)}  \ar[dr]_{\psi_M}&&M \otimes_R R_\G \ar[dl]^{\Theta_M}\\
&M_\G&}
\]
A right $R$-module is {\it $\G$-closed} if the following natural homomorphisms are all isomorphisms for every $J \in \G$.
\[
M \cong \Hom_R(R, M) \to \Hom_R (J, M)
\]
This amounts to saying that $\Hom_R(R/J,M) =0$ for every $J \in \G$ (i.e. $M$ is {\it $\G$-torsion-free}) and $\Ext^1_R(R/J,M) =0$ for every $J \in \G$ (i.e. $M$ is {\it $\G$-injective}). Thus if $M$ is $\G$-closed then $M$ is isomorphic to its module of quotients $M_\G$. Conversely, every $R$-module of the form $M_\G$ is $\G$-closed. The $\G$-closed modules form a full subcategory of both $\ModR$ and $\ModR_\G$.

A left $R$-module $N$ is called {\it $\G$-divisible} if $JN = N$ for every $J\in \G$. Equivalently, $N$ is $\G$-divisible if and only if $R/J \otimes_R N =0$ for each $J \in \G$. We denote the class of $\G$-divisible modules by $\D_\G$. It is straightforward to see that $\D_\G$ is a torsion class in $\RMod$.

A right Gabriel topology is {\it faithful} if $\Hom_R(R/J, R) =0$ for every $J \in \G$, or equivalently if  $R$ is $\G$-torsion-free, that is the natural map $\psi_R\colon R \to R_\G$ is injective. 

A right Gabriel topology is {\it finitely generated} if it has a basis consisting of finitely generated right ideals. Equivalently, $\G$ is finitely generated if the $\G$-torsion radical preserves direct limits (that is there is a natural isomorphism $t_\G(\varinjlim_i M_i) \cong \varinjlim_i (t_\G(M_i))$) if and only if the $\G$-torsion-free modules are closed under direct limits (that is, the associated torsion pair is of finite type). The first of these two equivalences was shown in \cite[Proposition XIII.1.2]{Ste75}, while the second was noted by Hrbek in the discussion before \cite[Lemma 2.4]{H}.

\subsection{More properties of Gabriel topologies}\label{SS:gab-prop}
We note that in the following two lemmas, all statements hold in the non-commutative case except for Lemma~\ref{L:G-top-facts} (iii). Otherwise, all the Gabriel topologies will be right Gabriel topologies, therefore the associated torsion pair $(\E_\G, \F_\G)$ are classes of right $R$-modules and the $\G$-divisible modules are left $R$-modules.

We will often refer to the following exact sequence where $\psi_R$ is the ring of quotients homomorphism in (\ref{eq:psiR}) as discussed in the previous subsection. We often will denote $t_\G(M)$ simply by $t(M)$ and when clear from the context, $\psi$ instead of $\psi_R$ and $\bar{\psi}:R/t(R) \to R_\G$ the induced monomorphism from $\psi_R$.
\begin{equation}\label{eq:psiR}
0 \to t_\G(R) \to R \overset{\psi_R} \to R_\G \to R_\G/\psi_R(R) \to 0
\end{equation}

\begin{lem}\label{L:G-top-facts}
Suppose $\G$ is a right Gabriel topology. Then the following statements hold. 
\begin{enumerate}
\item[(i)] If $M$ is a $\G$-torsion (right) $R$-module and $D$ is a $\G$-divisible module then $M \otimes_R D =0$.
\item[(ii)] If $N$ is a $\G$-torsion-free module then the natural map \\$\id_N \otimes_R \psi_R\colon  N \to N \otimes_R R_\G$ is a monomorphism and $N \to N \otimes_R R/t(R)$ is an isomorphism. 
\item[(iii)] Suppose $R$ is commutative. If $D$ is both $\G$-divisible and $\G$-torsion-free, then $D$ is a $R_\G$-module and $D \cong D\otimes_R R_\G$ via the natural map\\ $\id_D \otimes_R \psi_R\colon D \otimes_R R \to D \otimes_R R_\G$.
\item[(iv)]If $X$ is an $R$-$R$-bimodule and is $\G$-torsion, then $M \otimes_R X$ is $\G$-torsion for every $M \in \ModR$.
\end{enumerate}
\end{lem}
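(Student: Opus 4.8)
The plan is to establish the four statements more or less independently, leaning throughout on the fact that $\E_\G$ is a hereditary torsion class (hence closed under direct sums, epimorphic images, extensions, and therefore also under direct limits), together with the tensor description of $\G$-divisibility: $D$ is $\G$-divisible \iff $R/J\otimes_R D=0$ for every $J\in\G$.

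For (i), I would consider the class $\mathcal{N}=\{N\in\ModR : N\otimes_R D=0\}$ and note, from the right exactness of $-\otimes_R D$, that $\mathcal N$ is closed under epimorphic images, extensions, and direct limits. Each cyclic $\G$-torsion module $xR\cong R/\Ann_R(x)$ with $\Ann_R(x)\in\G$ lies in $\mathcal N$, since $R/\Ann_R(x)\otimes_R D=0$ by $\G$-divisibility of $D$. Writing an arbitrary $\G$-torsion module $M$ as the directed union of its finite sums of cyclic submodules, which are iterated extensions of such cyclics, yields $M\in\mathcal N$. For (ii), the key observation is that $N\,t_\G(R)\subseteq t_\G(N)$: if $n\in N$ and $jr=0$ for $j\in t_\G(R)$ then $(nj)r=0$, so $\Ann_R(nj)\supseteq\Ann_R(j)\in\G$. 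Hence for $\G$-torsion-free $N$ we get $N\,t_\G(R)=0$ and therefore $N\otimes_R R/t_\G(R)\cong N/N\,t_\G(R)= N$. For the monomorphism I would use the commutative triangle $(\star)$: since $\Ker\psi_N=t_\G(N)=0$, the map $\psi_N$ is injective, and as it factors as $\Theta_N\circ(\id_N\otimes\psi_R)$, the map $\id_N\otimes\psi_R\colon N\to N\otimes_R R_\G$ is injective as well.

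For (iii), assume $R$ commutative and $D$ both $\G$-divisible and $\G$-torsion-free; injectivity of $\id_D\otimes\psi_R$ is already given by (ii). For surjectivity, split (\ref{eq:psiR}) into the short exact sequence $0\to R/t_\G(R)\xrightarrow{\bar\psi} R_\G\to R_\G/\psi_R(R)\to 0$ and tensor with $D$. Since $R_\G/\psi_R(R)$ is $\G$-torsion, part (i) gives $D\otimes_R\bigl(R_\G/\psi_R(R)\bigr)=0$, so $\id_D\otimes\bar\psi$ is surjective; composing with the isomorphism $D\cong D\otimes_R R/t_\G(R)$ from (ii) shows that $\id_D\otimes\psi_R$ is surjective, hence an isomorphism. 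Transporting the (naturally $R_\G$-linear) structure of $D\otimes_R R_\G$ across this isomorphism endows $D$ with its $R_\G$-module structure. For (iv), since $X$ is $\G$-torsion as a right $R$-module (i.e.\ $X\in\E_\G$) and $\E_\G$ is closed under arbitrary direct sums and epimorphic images, a surjection $R^{(\kappa)}\twoheadrightarrow M$ induces a surjection of right $R$-modules $X^{(\kappa)}=R^{(\kappa)}\otimes_R X\twoheadrightarrow M\otimes_R X$ with $X^{(\kappa)}\in\E_\G$, whence $M\otimes_R X\in\E_\G$.

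I do not expect a serious obstacle here; the only point requiring care is the bookkeeping in (ii) and (iii), where one must keep straight the canonical identifications $N\cong N\otimes_R R\cong N\otimes_R R/t_\G(R)$ and check that the composite $D\to D\otimes_R R/t_\G(R)\xrightarrow{\id_D\otimes\bar\psi} D\otimes_R R_\G$ is genuinely $\id_D\otimes\psi_R$ (which follows from the factorization $\psi_R=\bar\psi\circ(R\to R/t_\G(R))$). Everything else is a direct application of the closure properties of the torsion class $\E_\G$, the triangle $(\star)$, and the fact, recalled in the preliminaries, that $\Ker\psi_M=t_\G(M)$ and that $R_\G/\psi_R(R)$ is $\G$-torsion.
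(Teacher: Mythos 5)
Your proof is correct and follows essentially the same route as the paper: (i) and (iv) rest on $R/J\otimes_R D=0$ and the closure properties of the torsion class, (ii) on the triangle $(\star)$ together with $\Ker\psi_N=t_\G(N)$, and (iii) on the vanishing of $D\otimes_R\bigl(R_\G/\psi_R(R)\bigr)$ combined with injectivity from (ii). The only cosmetic differences are that in (i) the paper uses a single surjection $\bigoplus_\alpha R/J_\alpha\twoheadrightarrow M$ instead of direct limits of iterated extensions of cyclics, and in (ii) it deduces that $N\to N\otimes_R R/t(R)$ is an isomorphism from the factorization of the monomorphism $\id_N\otimes_R\psi_R$ rather than from the identity $N\,t_\G(R)=0$; both variants are sound.
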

\begin{proof}
(i) This is from \cite[Proposition VI.9.1]{Ste75}. Suppose $M$ is a $\G$-torsion module and $D$ is a $\G$-divisible module. Then there is the following surjection.
\[
\bigoplus_{\substack{\alpha \in A \\ J_\alpha \in \G}}R/J_\alpha \to M \to 0 
\]
As $R/J \otimes_R D =0$ for every $J \in \G$ by definition, the conclusion follows by applying $(-\otimes_R D)$ to the above sequence. \\
(ii) Consider the following commuting triangle where $N$ is $\G$-torsion-free in $\ModR$. 
\[
\xymatrix@C=2cm{
0 \ar[d]& \\
N\cong N \otimes_RR \ar[r]^{\id_N \otimes_R \psi_R  \hspace{10pt}}  \ar[d]_{\psi_N}&N \otimes_R R_\G \ar[dl]^{\Theta_N}\\
N_\G&}
\]
Then $\psi_N$ is a monomorphism and since $\psi_N = \Theta_N \circ (\id_N \otimes_R \psi_R)$, also $\id_N \otimes_R \psi_R$ is a monomorphism. Moreover, we know that $\id_N \otimes_R \psi_R$ factors as follows. 
\[N \twoheadrightarrow N \otimes_R R/t(R) \to N \otimes_R R_\G\]
Thus also $N \twoheadrightarrow N \otimes_R R/t(R)$ is a monomorphism, and therefore is an isomorphism.\\
(iii) Consider the following commuting diagram where the horizontal sequence is exact by (i) as $D$ is $\G$-torsion-free.
\[
\xymatrix@C=1.7cm{
&0 \ar[d]& \\
0 \ar[r] &D \ar[r]^{\id_D \otimes_R \psi_R  \hspace{10pt}}  \ar[d]_{\psi_D}&D \otimes_R R_\G \ar[dl]^{\Theta_D} \ar[r] & D \otimes_R R_\G/\psi(R) \ar[r] &0\\
&D_\G&}
\]
Additionally, $D\otimes_R R_\G/\psi(R)=0$, since $R_\G/\psi(R)$ is $\G$-torsion. Therefore the following map is an isomorphism.
 \[\id_D \otimes_R \psi_R\colon D \to D \otimes_R R_\G\]
 \\
(iv) Fix $X$ a $\G$-torsion $R$-$R$-bimodule and $M \in \ModR$. Take a free presentation of $M$, $R^{(\alpha)} \to M \to 0$. Apply $(- \otimes_RX)$ to find the following exact sequence.
\[
X^{(\alpha)} \to M \otimes_R X \to 0
\]
As $X^{(\alpha)}$ is $\G$-torsion and the $\G$-torsion modules are closed under quotients, also $M \otimes_R X$ is $\G$-torsion.
\end{proof}
\begin{lem} \label{L:tor-Rg}
Suppose $\G$ is a Gabriel topology of right ideals. Then the following hold.
\begin{enumerate}
\item[(i)] If $\pdim M_R \leq 1$, then $\Tor^R_1(M, R_\G)=0$. 
\item[(ii)] If $\pdim M_R \leq 1$ and $M$ is $\G$-torsion-free, then \\$\Tor^R_1(M, R_\G)=0=\Tor^R_1(M, R_\G/\psi(R))$. 
\end{enumerate}
If moreover $\G$ is a right Gabriel topology with a basis of finitely generated ideals and $\pdim M_R \leq 1$, then $M_R \otimes_R R_\G$ is $\G$-torsion-free.
\end{lem}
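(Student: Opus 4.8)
The plan is to realise $M\otimes_R R_\G$ as the quotient of a $\G$-closed module by a $\G$-closed submodule, and then to deduce torsion-freeness by a short torsion-theoretic argument.

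First I would fix a projective resolution $0\to P_1\to P_0\to M\to 0$ with $P_0$ and $P_1$ projective, which exists since $\pdim M_R\le 1$. Because $\Tor^R_1(M,R_\G)=0$ by part (i), applying $-\otimes_R R_\G$ produces a short exact sequence
\[
0\to P_1\otimes_R R_\G\overset{\iota}\to P_0\otimes_R R_\G\overset{\pi}\to M\otimes_R R_\G\to 0 .
\]
The key preliminary step is to check that the natural map $\Theta_P\colon P\otimes_R R_\G\to q(P)$ is an isomorphism for every projective module $P$. For $P=R$ this is the canonical isomorphism $R\otimes_R R_\G\cong R_\G$; moreover $-\otimes_R R_\G$ commutes with arbitrary direct sums, and so does $q$ --- this is precisely where the hypothesis that $\G$ has a basis of finitely generated ideals comes in, since then $t_\G$ commutes with direct limits and $\Hom_R(J,-)$ commutes with direct sums for $J$ in the basis. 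Hence $\Theta$ is a natural isomorphism on free modules, and then on projective modules by additivity. In particular $P_0\otimes_R R_\G\cong q(P_0)$ and $P_1\otimes_R R_\G\cong q(P_1)$ are $\G$-closed; so $P_0\otimes_R R_\G$ is $\G$-torsion-free and $\Ext^1_R(R/J,\,P_1\otimes_R R_\G)=0$ for every $J\in\G$.

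Then I would argue torsion-freeness directly. Let $x\in M\otimes_R R_\G$ satisfy $xJ=0$ for some $J\in\G$; it suffices to show $x=0$. Choose $y$ with $\pi(y)=x$; then $yj\in\Ker\pi=\iota(P_1\otimes_R R_\G)$ for all $j\in J$, so $j\mapsto\iota^{-1}(yj)$ is a well-defined $R$-linear map $J\to P_1\otimes_R R_\G$. By the vanishing of $\Ext^1_R(R/J,\,P_1\otimes_R R_\G)$ it extends to a map $R\to P_1\otimes_R R_\G$; writing $z$ for the image of $1$ one gets $\iota(z)j=yj$ for all $j\in J$, i.e.\ $(y-\iota(z))J=0$. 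Since $P_0\otimes_R R_\G$ is $\G$-torsion-free this forces $y=\iota(z)\in\Ker\pi$, whence $x=0$. Therefore $M\otimes_R R_\G$ is $\G$-torsion-free.

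The only ingredient that is not purely formal is the identification $P\otimes_R R_\G\cong q(P)$ for projective $P$, and this is exactly where finite generation of $\G$ is needed; everything else is a routine manipulation with $\G$-closed modules. As an alternative one could apply the localisation functor $q$ (which is left exact) to $0\to P_1\to P_0\to M$ and compare it, through $\Theta$, with the tensored sequence, obtaining by a short diagram chase that $\Theta_M$ embeds $M\otimes_R R_\G$ into the $\G$-closed, hence $\G$-torsion-free, module $q(M)=M_\G$.
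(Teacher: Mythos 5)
Your argument for the \emph{final} statement is essentially correct and runs parallel to the paper's: the paper also tensors the projective resolution, uses (i) to get the short exact sequence, and obtains $\G$-closedness of $P_i\otimes_R R_\G$ from the fact that for a finitely generated Gabriel topology direct sums of $\G$-closed modules are $\G$-closed (citing Stenstr\"om), rather than via your natural transformation $\Theta$ on projectives --- the two routes are interchangeable. Your explicit element-wise deduction that a quotient of a $\G$-torsion-free module by a $\G$-closed submodule is $\G$-torsion-free is a correct filling-in of a step the paper leaves implicit (the extension of $j\mapsto\iota^{-1}(yj)$ along $J\hookrightarrow R$ using $\Ext^1_R(R/J,P_1\otimes_R R_\G)=0$ is exactly the right mechanism).

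However, there is a genuine gap: the lemma also asserts parts (i) and (ii), and you prove neither; indeed you invoke (i) ("because $\Tor^R_1(M,R_\G)=0$ by part (i)") as an input. This is not a dispensable formality: $R_\G$ is in general \emph{not} flat as an $R$-module (flatness of $R_\G$ is precisely the special ``perfect localisation'' situation studied later in the paper), so $\Tor^R_1(M,R_\G)=0$ really needs the hypothesis $\pdim M_R\le 1$ and an argument. The paper proves (i) by playing torsion against torsion-freeness twice: $\Tor^R_1(M,R/t(R))$ embeds in $M\otimes_R t(R)$, hence is $\G$-torsion, while it also embeds (using $\Tor^R_2(M,R_\G/\psi(R))=0$, i.e.\ $\pdim M\le 1$) into $\Tor^R_1(M,R_\G)\subseteq P_1\otimes_R R_\G$, hence is $\G$-torsion-free, so it vanishes; then $\Tor^R_1(M,R_\G)$ embeds into the $\G$-torsion module $\Tor^R_1(M,R_\G/\psi(R))$ while sitting inside the $\G$-torsion-free module $P_1\otimes_R R_\G$, so it vanishes too. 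Part (ii) then follows by tensoring $0\to R/t(R)\to R_\G\to R_\G/\psi(R)\to 0$ with $M$ and using that $M\otimes_R R/t(R)\cong M\to M\otimes_R R_\G$ is injective for $\G$-torsion-free $M$. Without supplying arguments of this kind, your proposal establishes only the last assertion of the lemma, conditionally on (i).
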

\begin{proof}
(i)
By assumption $\pdim M_R \leq 1$, so there is the following projective resolution of $M$, where $P_0, P_1$ are projective right $R$-modules.
\begin{equation}\label{E:fan1}
0 \to P_1 \overset{\gamma}\to P_0 \to M \to 0
\end{equation}
We will first show that $\Tor^R_1(M, R/t(R))=0$. We first note that from the following exact sequence (\ref{E:fan4}), $\Tor^R_1(M, R/t(R))$ is $\G$-torsion, as it is contained in the $\G$-torsion module $M \otimes_R t(R)$ (see Lemma~\ref{L:G-top-facts}(iv)) and is itself a right $R$-module as $R/t(R)$ is an $R$-$R$-bimodule.
\begin{equation}\label{E:fan4}
0 \to \Tor^R_1(M, R/t(R)) \to M \otimes_R t(R) \to M \to M \otimes_R R/t(R) \to 0
\end{equation}
Next, we note that from the following exact sequence (\ref{E:fan2}), $\Tor^R_1(M, R_\G)$ is $\G$-torsion-free as it is contained in the $\G$-torsion-free module $P_1 \otimes_R R_\G$. 
\begin{equation}\label{E:fan2}
0 \to \Tor^R_1(M, R_\G) \to P_1 \otimes_R R_\G \to P_0 \otimes_R R_\G \to M \otimes_R R_\G \to 0
\end{equation}
Thus from the following exact sequence (\ref{E:fan3}), $\Tor^R_1(M, R/t(R))$ is $\G$-torsion-free as by assumption $\Tor^R_2(M, R_\G/\psi(R))=0$. Therefore we conclude that $\Tor^R_1(M, R/t(R))=0$ as it is both $\G$-torsion and $\G$-torsion-free.
\begin{equation}\label{E:fan3}
0 \to \Tor^R_1(M, R/t(R)) \to \Tor^R_1(M, R_\G) \to \Tor^R_1(M, R_\G/\psi(R)) 
\end{equation}
Moreover, also $\Tor^R_1(M, R_\G/\psi(R))$ is $\G$-torsion by applying $(- \otimes_R R_\G/\psi(R))$ to the short exact sequence (\ref{E:fan1}). Therefore $\Tor^R_1(M, R_\G)=0$ as it is both $\G$-torsion by (\ref{E:fan3}) and $\G$-torsion-free by (\ref{E:fan2}). 

(ii) 
 By applying the functor $(M \otimes_R -)$ to the exact sequence $0 \to R/t(R) \overset{\bar{\psi}}\to R_\G \to R_\G/\psi(R_\G) \to 0$, we have the following exact sequence.
 \[
\xymatrix{
0= \Tor^R_1(M, R_\G) \ar[r] & \Tor^R_1( M, R_\G/\psi(R)) \ar[r] & M \cong M \otimes_R R/t(R)  }
\]
By Lemma~\ref{L:G-top-facts}(ii), $\id_M \otimes_R \bar{\psi_R}\colon M \otimes_RR/t(R) \to M \otimes_R R_\G $ is a monomorphism and by (i), $\Tor^R_1(M, R_\G)=0$ hence $\Tor^R_1(M, R_\G/\psi(R))=0$.

For the final statement, first note that for any projective right $R$-module $P_R$, $P_R \otimes_R R_\G \underset{\oplus} \leq R_\G^{(\alpha)}$. By the assumption that $\G$ is finitely generated, by \cite[Proposition XIII.1.2]{Ste75}, we have that arbitrary direct sums of copies of $\G$-closed modules are $\G$-closed, thus we conclude that $P_R \otimes_R R_\G$ is $\G$-closed. Now consider the presentation $0 \to P_1 \to P_0 \to M \to 0$ of $M$ with $P_0, P_1$ projective. Then $0 \to P_1\otimes_R R_\G \to P_0\otimes_R R_\G \to M\otimes_R R_\G \to 0$ is exact as $\Tor^R_1(M, R_\G)=0$ by (i) of this lemma. As the middle term $P_0\otimes_R R_\G$ is $\G$-torsion-free and $P_1 \otimes_R R_\G$ is $\G$-closed, it follows that $M \otimes_R R_\G$ is $\G$-torsion-free.  

\end{proof}

\subsection{Gabriel topologies and $1$-tilting classes}\label{SS:gab-tilt}

In this paper, we will only be concerned with Gabriel topologies over commutative rings. In this setting, much useful research has already done in this direction. Specifically, in \cite{H}, Hrbek showed that over commutative rings the faithful finitely generated Gabriel topologies are in bijective correspondence with $1$-tilting classes, and that the latter are exactly the classes of $\G$-divisible modules for some faithful finitely generated Gabriel topology $\G$, as stated in the following theorem. 

\begin{thm} \cite[Theorem 3.16]{H} \label{T:Hrb-tilting}
Let R be a commutative ring. There are bijections between the following collections.
\begin{enumerate}
\item[(i)] 1-tilting classes $\T$ in $\ModR$. 
\item[(ii)] faithful finitely generated Gabriel topologies $\G$ on $R$.
\item[(iii)] faithful hereditary torsion pairs $(\E,\F)$ of finite type in $\ModR$.
\end{enumerate}
Moreover, the tilting class  $\T$ is the class of $\G$-divisible modules with respect to the Gabriel topology $\G$.
\end{thm}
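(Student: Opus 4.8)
This is \cite[Theorem~3.16]{H}; I sketch the proof I would give. The statement splits into the bijection (ii)$\leftrightarrow$(iii), which is essentially classical, and the bijection (i)$\leftrightarrow$(ii) together with the identification $\T=\D_\G$, which carries the content. Throughout I would use that $1$-tilting classes are of finite type.

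For (ii)$\leftrightarrow$(iii) I would simply invoke the dictionary recalled in Section~\ref{S:gab-top}: the assignments $\G\mapsto(\E_\G,\F_\G)$ and $(\E,\F)\mapsto\{J\le R\mid R/J\in\E\}$ are mutually inverse bijections between right Gabriel topologies and hereditary torsion pairs, and one checks they carry the two extra adjectives to one another. ``$\G$ faithful'' says exactly $R\in\F_\G$, which is ``$(\E_\G,\F_\G)$ faithful'', and ``$\G$ finitely generated'' is equivalent to $\F_\G$ being closed under direct limits, i.e.\ to $(\E_\G,\F_\G)$ being of finite type, by \cite[Proposition~XIII.1.2]{Ste75} together with the observation of Hrbek preceding \cite[Lemma~2.4]{H}. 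This part needs no new idea.

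For (i)$\to$(ii) I would start from a $1$-tilting module $T$ with class $\T=T^\perp=T^{\perp_1}=\Gen(T)$, sitting in the hereditary complete tilting cotorsion pair $(\A,\T)$. By the finite type theorem for tilting classes (Bazzoni--Herbera, Angeleri H\"ugel--Herbera, Bazzoni--\Sto) one has $\T=\clS^{\perp_1}$ for a set $\clS\subseteq\A$ of finitely presented modules of projective dimension at most $1$; over the commutative ring $R$ one further reduces, via Fitting-ideal invariants of the modules in $\clS$, to the case where $\clS$ is a set of finitely presented ideals $J$ with $\pdim_R J\le1$. The pivotal computation is then the natural isomorphism
\[
\Ext^1_R(J,M)\;\cong\;\Ext^1_R(J,R)\otimes_R M
\]
valid for every finitely presented ideal $J$ with $\pdim_R J\le1$ and every $M$: dualise a presentation $0\to P_1\to P_0\to J\to0$ with $P_0,P_1$ finitely generated projective, use $\Hom_R(P_i,-)\cong P_i^{\vee}\otimes_R-$ and right exactness of $\otimes_R$ to get $\Ext^1_R(J,M)\cong\coker(P_0^{\vee}\otimes_R M\to P_1^{\vee}\otimes_R M)\cong\coker(P_0^{\vee}\to P_1^{\vee})\otimes_R M$, and identify $\coker(P_0^{\vee}\to P_1^{\vee})$ with $\Ext^1_R(J,R)$ by dualising the same presentation. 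Writing $X_J:=\Ext^1_R(J,R)$, which is a finitely generated $R$-module, this yields $\T=\{M\mid X_J\otimes_R M=0\ \text{for all }J\in\clS\}$. I would then let $\G$ be the Gabriel topology associated with the hereditary torsion class generated by the modules $X_J$, and prove $\T=\D_\G$; finite generation of $\G$ comes from finite presentation of the $J$, and faithfulness is checked directly from the construction. This also establishes the ``moreover'' clause.

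For (ii)$\to$(i) I would run the previous argument backwards: given a faithful finitely generated Gabriel topology $\G$, finite generation and the displayed isomorphism (read in reverse) let one write $\D_\G=\clS^{\perp_1}$ for a set $\clS$ of finitely presented ideals of projective dimension $\le1$; the cotorsion pair generated by the set $\clS$ is complete by \cite[Theorem~10]{ET01}, so $R$ has a special $\clS^{\perp_1}$-preenvelope $0\to R\to T_0\to T_1\to0$, and $T:=T_0\oplus T_1$ then satisfies (T1)--(T3), hence is $1$-tilting with $T^\perp=\clS^{\perp_1}=\D_\G$. The two passages are mutually inverse because a $1$-tilting class and a faithful finitely generated Gabriel topology each determine, and are recovered from, the class $\D_\G$. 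The formal skeleton — the Gabriel/torsion-pair dictionary, completeness of cotorsion pairs generated by a set, the standard construction of a tilting module from a special preenvelope of $R$ — is routine; the genuine difficulties, and the places where commutativity of $R$ is essential, are the reduction of the finite-type defining set to ideals of projective dimension $\le1$ (and its converse), and the verification that the finitely generated modules $\Ext^1_R(J,R)$ Tor-generate exactly $\E_\G$, i.e.\ that the Ext-orthogonality defining the tilting class coincides with the Tor-orthogonality defining $\D_\G$. That matching, with the isomorphism above as the pivot, I expect to be the main obstacle.
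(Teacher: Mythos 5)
This theorem is quoted from \cite[Theorem 3.16]{H}; the paper gives no proof of its own, so your sketch can only be measured against Hrbek's argument and on its own terms. Your architecture is the right one: the (ii)$\leftrightarrow$(iii) dictionary is classical; the finite-type theorem reduces (i)$\to$(ii) to a set $\clS$ of finitely presented modules of projective dimension at most one; and the pivot isomorphism $\Ext^1_R(S,M)\cong\Ext^1_R(S,R)\otimes_R M$, which you prove correctly and which holds for every such $S$ (not only for ideals), is exactly the mechanism converting the Ext-orthogonality defining $\T$ into a tensor-vanishing condition. The direction (ii)$\to$(i) is also essentially right: for $J=(a_1,\dots,a_n)\in\G$, faithfulness makes $R\to R^n$, $1\mapsto(a_1,\dots,a_n)$, injective, its cokernel $S_J$ is finitely presented of projective dimension at most one with $\Ext^1_R(S_J,M)\cong M/JM$, and the special preenvelope of $R$ in the cotorsion pair generated by the set $\{S_J\}$ yields the tilting module via (T1)--(T3).

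There is, however, a genuine gap, and you name it yourself. The passage from the tensor-vanishing class $\{M\mid \Ext^1_R(S,R)\otimes_RM=0 \text{ for all } S\in\clS\}$ to a divisibility class $\D_\G$ for a faithful finitely generated Gabriel topology is asserted rather than proved. Two things are missing. First, the statement that for a finitely presented module $X$ one has $X\otimes_RM=0$ if and only if $\operatorname{Fitt}_0(X)M=M$: this is the real content of your ``reduction via Fitting-ideal invariants''; one implication is immediate from $\operatorname{Fitt}_0(X)\subseteq\Ann_R(X)$, but the other requires a determinantal argument on the presentation matrix and is the place where commutativity does genuine work. Second, the verification that the ideals $\operatorname{Fitt}_0(\Ext^1_R(S,R))$, $S\in\clS$, generate a Gabriel topology that is faithful and finitely generated, that $\D_\G$ equals $\T$ on the nose, and that the two assignments are mutually inverse. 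Your intermediate claim that $\clS$ may be taken to consist of finitely presented \emph{ideals} $J$ of projective dimension at most one is moreover a distraction: it is neither needed (the pivot works for all of $\clS$) nor obviously achievable, and for an ideal $J$ one has $\Ext^1_R(J,-)\cong\Ext^2_R(R/J,-)$, which is not a divisibility condition, so nothing is gained by the reduction. As a map of where the proof lives your proposal is sound, but as a proof it stops exactly at the step you flag as the main obstacle.
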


When we refer to the {\it Gabriel topology associated to the $1$-tilting class $\T$} we will always mean the Gabriel topology in the sense of the above theorem. In addition we will often denote $\A$ to be the right $\Ext$-orthogonal class to $\D_\G =\T$ in the situation just described, so $(\A, \D_\G)$ will denote the $1$-tilting cotorsion pair.

%

In \cite{AHHr} the correspondence between faithfully finitely generated Gabriel topologies and $1$-tilting classes over commutative rings was extended to finitely generated Gabriel topologies which were shown to be in bijective correspondence with silting classes. Thus in this case the class $\D_\G$ of $\G$-divisible modules coincides with the class $\Gen T$ for some silting module $T$.

\subsection{Homological ring epimorphisms}\label{SS:homological}
There is a special class of Gabriel topologies which behave particularly well and are related to ring epimorphisms. The majority of this paper will be restricted to looking at these Gabriel topologies. 
The standard examples of these Gabriel topologies over $R$ are localisations of a commutative ring $R$ with respect to a multiplicative subset $S$, where the Gabriel topology has as a basis the principal ideals generated by elements of $S$.

A {\it ring epimorphism} is a ring homomorphism $R \overset{u}\to U$ such that $u$ is an epimorphism in the category of unital rings. This is equivalent to the natural map $ U \otimes_R U \to U$ induced by the multiplication in $U$ being an isomorphism, or equivalently that $U \otimes_R (U / u(R)) =0$ (see \cite[Chapter XI.1]{Ste75}.

Two ring epimorphisms $R \overset{u}\to U$ and $R \overset{u'}\to U'$ are equivalent if there is a ring isomorphism $\sigma\colon U\to U'$ such that $\sigma  u=u'$.

A ring epimorphism is {\it homological} if $\Tor^R_n(U_R,{}_RU) = 0$ for all $n >0$. A ring epimorphism is called {\it (left) flat} if $u$ makes $U$ into a flat left $R$-module. Clearly all flat ring epimorphisms are homological. We will denote the cokernel of $u$ by $K$ and sometimes by $U/R$ or $U/u(R)$.

A left flat ring epimorphism $R \overset{u}\to U$ is called a {\it perfect right localisation} of $R$. In this case, by \cite[Chapter XI.2, Theorem 2.1]{Ste75} the family of right ideals
\[
\G = \{ J \leq R \mid J U = U  \} 
\]
forms a right Gabriel topology. Moreover, there is a ring isomorphism $\sigma:U \to R_\G$ such that $\sigma  u: R \to R_\G$ is the canonical isomorphism $\psi_R: R \to R_\G$, or, in other words, $u$ and $\psi_R$ are equivalent ring epimorphisms. Note also that a right ideal $J$ of $R$ is in $\G$ if and only if $R/J \otimes_R U =0$.

We will make use of the characterisations of perfect right localisations from Proposition 3.4 in Chapter XI.3 of Stenstr\"om's book \cite{Ste75}. 

In particular, Proposition 3.4 states that the right Gabriel topology $\G$ associated to a flat ring epimorphism $R \overset{u}\to U$ is finitely generated and the $\G$-torsion submodule $t_\G(M)$ of a right $R$-module $M$ is the kernel of the canonical homomorphism $M\to M \otimes_R U $. Thus, $K=U/u(R)$ is $\G$-torsion, hence $\Hom_R(K, U)=0$.
If moreover the flat ring epimorphism $R \overset{u}\to U$ is injective, then $ \Tor^R_1(M, K) \cong t_\G(M)$ and $\G$ is faithful.

\begin{rem}\label{R:pdU=1}  \emph{From the above observations and results in \cite{H}, when $R$ is commutative and $R \overset{u}\to U$ is a flat injective epimorphism one can associate a $1$-tilting class which is exactly the class of $\G$-divisible modules.
In the case that additionally $\pdim_R U \leq 1$, one can apply a result from \cite{AS} which states that $U \oplus K$ is a $1$-tilting module, so there is a $1$-tilting class denoted $\T: =(U \oplus K)^\perp = \Gen(U)$. In fact, we claim that this is exactly the $1$-tilting class of $\G$-divisible modules. Explicitly, the Gabriel topology associated to $\T$ in the sense of Theorem~\ref{T:Hrb-tilting} is exactly the collection of ideals $\{J \mid JM = M \text{ for every } M \in \T \}$. The Gabriel topology that arises from the perfect localisation is the collection $\{J \mid JU = U \}$ and since $U \in \T = \Gen U$, the Gabriel topologies associated to these two $1$-tilting classes are the same. We conclude that the two $1$-tilting classes coincide: $\Gen_R(U) = \D_\G$.
\\
 In \cite[Proposition 5.4]{H} the converse is proved: If one starts with a $1$-tilting class $\T$ with associated Gabriel topology $\G$, so that $\T=\D_\G$, then $R_\G$ is a perfect localisation and $\pdim R_\G \leq 1$ if and only if $\Gen R_\G  = \D_\G$.} 
 \end{rem}

The following lemma will be useful when working with a Gabriel topology over a commutative ring that arises from a perfect localisation.
 \begin{lem} \label{L:finmanyann}
Let $R$ be a commutative ring, $u:R \to U$ a flat injective ring epimorphism, and $\G$ the associated Gabriel topology. Then the annihilators of the elements of $U/R$ form a sub-basis for the Gabriel topology $\G$. That is, for every $J\in \G$ there exist $z_1, z_2, \dots , z_n \in U$ such that 
\[
\bigcap_{\substack{
   0 \leq i \leq n}}
   \Ann_R(z_i +R) \subseteq J.\] 
 \end{lem}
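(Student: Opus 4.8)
The plan is to use the finite generation of $\G$ together with the description of $\G$-torsion via the cokernel $K = U/R$. First I would fix $J \in \G$. Since $\G$ arises from the perfect localisation $u\colon R \to U$, by the characterisations in \cite[Chapter XI, Proposition 3.4]{Ste75} the topology $\G$ is finitely generated, so there is a finitely generated ideal $J_0 \subseteq J$ with $J_0 \in \G$; replacing $J$ by $J_0$ it suffices to treat the case $J = (a_1, \dots, a_n)$ finitely generated. The defining property $J \in \G$ is equivalent to $JU = U$, i.e. $R/J \otimes_R U = 0$, equivalently $1 \in JU$.

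Next I would unwind $1 \in JU = a_1 U + \cdots + a_n U$: write $1 = \sum_{i=1}^n a_i z_i$ with $z_i \in U$. The claim is that $\bigcap_{i=1}^n \Ann_R(z_i + R) \subseteq J$. Take $r \in \bigcap_i \Ann_R(z_i + R)$, so $r z_i \in R$ for every $i$. Then $r = r \cdot 1 = \sum_i a_i (r z_i)$, and since each $r z_i$ lies in $R$ and $a_i \in J$, this exhibits $r \in J$. This gives the desired inclusion, with the elements $z_1, \dots, z_n \in U$ (whose images $z_i + R$ lie in $U/R$) being precisely those appearing in the expression $1 = \sum a_i z_i$.

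The only point requiring a little care is the reduction to finitely generated $J$, which rests on the finite generation of the Gabriel topology $\G$ — guaranteed here by the fact that $\G$ comes from a flat ring epimorphism (Stenström's Proposition 3.4). One should also note that each $\Ann_R(z_i + R)$ is genuinely a member of $\G$: indeed $z_i + R$ is an element of the $\G$-torsion module $K = U/R$, and for a $\G$-torsion module every element has annihilator in $\G$ by the description $\E_\G = \{M \mid \Ann_R(x) \in \G \text{ for all } x \in M\}$. Hence the displayed finite intersection is an ideal in $\G$ contained in $J$, which is exactly the assertion that the annihilators of elements of $U/R$ form a sub-basis of $\G$. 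I do not anticipate a genuine obstacle here; the argument is essentially a direct computation once the translation $J \in \G \iff 1 \in JU$ and the finite-generation reduction are in place.
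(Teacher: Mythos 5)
Your proof is correct and follows essentially the same argument as the paper: write $1_U=\sum_i a_iz_i$ with $a_i\in J$, $z_i\in U$, and observe that any $r$ annihilating all $z_i+R$ satisfies $r=\sum_i a_i(rz_i)\in J$. The preliminary reduction to finitely generated $J$ is harmless but unnecessary, since $1\in JU$ already yields a finite sum $\sum a_i z_i$ with $a_i \in J$ for an arbitrary $J\in\G$, which is all the finiteness the argument needs.
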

 \begin{proof} Every ideal of the form $\Ann_R(z+R)$ is an ideal in $\G$ since $K=U/R$ is $\G$-torsion.
 
 Fix an ideal $J \in \G$. Then, $U = JU$, so $1_U = \sum_{0 \leq i \leq n} a_i z_i$ where $a_i \in J$ and $z_i \in U$. We claim that 
 \[
\bigcap_{\substack{
   0 \leq i \leq n}}
   \Ann_R(z_i +R) \subseteq J.
\] 
Take $b \in \bigcap_{\substack{
   0 \leq i \leq n}}
   \Ann_R(z_i +R)$. Then 
 \[
 b = \sum_{0 \leq i \leq n} b a_i z_i \in J
 \]
 since each $b z_i \in R$, hence $b a_i z_i \in J$, and it follows that $b \in J$. 
 \end{proof}

\section{Enveloping $1$-tilting classes over commutative rings}\label{S:tilting-enveloping}
For this section, $R$ will always be a commutative ring and $\T$ a $1$-tilting class. 

By Theorem~\ref{T:Hrb-tilting} there is a faithful finitely generated Gabriel topology $\G$ such that  $\T$ is the class of $\G$-divisible modules.
We denote again by  $(\E_\G, \F_\G)$ the associated faithful hereditary torsion pair of finite type. We use $\D_\G$ and $\T = \Gen T = T^\perp$ interchangeably to denote the $1$-tilting class, and $\A$ to denote the right orthogonal class ${}^\perp \D_\G$.
\\
The aim of this section is to show that if $\T$ is enveloping, then $R_\G$, the ring of quotients with respect to $\G$, is $\G$-divisible and therefore $\psi_R:R \to R_\G$ is a perfect localisation of $R$.

 Recall that if $\T$ is $1$-tilting, $\T \cap {}^\perp\T = \Add(T)$ (see \cite[Lemma 13.10]{GT12}). By (T3) of the definition of a $1$-tilting module we have the following short exact sequence \[ \text{(T3) }\quad
0 \to R \overset{\varepsilon}\to T_0 \to T_1 \to 0
\] where $T_0, T_1 \in \Add(T)$. In fact, this short exact sequence is a special $\D_\G$-preenvelope of $R$, and $T_0 \oplus T_1$ is a $1$-tilting module which generates $\T$ by \cite[Theorem 13.18 and Remark 13.19]{GT12}. 

Furthermore, assuming that $R$ has a $\D_\G$-envelope, we  can suppose without loss of generality that the sequence (T3) is the $\D_\G$-envelope of $R$, since an envelope is extracted from a special preenvelope by passing to direct summands (Proposition~\ref{P:Xu-env}). 
For the rest of the section we will denote the $\D_\G$-envelope of $R$ by $\varepsilon$. 


Recall from Section~\ref{S:gab-top} that 
for every $M \in \ModR$ there is the commuting diagram $(\star)$. 
Since $\G$ is faithful we have the following short exact sequence where $\psi_R$ is a ring homomorphism and $R_\G/R$ is $\G$-torsion.
\[(\dag)\quad
0 \to R \overset{\psi_R} \to R_\G \to R_\G/R \to 0
\]

 We now show two lemmas about the $1$-tilting module $T_0 \oplus T_1$ and the class $\Add (T_0 \oplus T_1)$ assuming that $R$ has a $\D_\G$-envelope.
\begin{lem} \label{L:R-env}
Let the following short exact sequence be a $\D_\G$-envelope of $R$.
\[
0 \to R \overset{\varepsilon}\to T_0 \to T_1 \to 0
\]
Then $T_0$ is $\G$-torsion-free and $T_0 \cong T_0 \otimes_R R_\G$.
\end{lem}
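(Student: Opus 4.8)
The plan is to exploit that $\varepsilon\colon R\to T_0$ is not merely a preenvelope but an \emph{envelope}, together with the fact that $R$ has another natural map into a $\G$-divisible module, namely (a suitable modification of) $\psi_R\colon R\to R_\G$. The first observation is that $R_\G$ is $\G$-torsion-free and $\G$-injective, being a module of quotients, but it need not be $\G$-divisible, so we cannot use $\psi_R$ directly as a competitor preenvelope. However, we can still extract information about the torsion of $T_0$ as follows.

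First I would show $T_0$ is $\G$-torsion-free. Since $\varepsilon$ is a preenvelope and $R_\G$ is $\G$-torsion-free, consider $t_\G(T_0)$; it is a $\G$-torsion submodule of $T_0$. Apply $\Hom_R(-, ?)$ arguments, or more directly: let $\pi\colon T_0\to T_0/t_\G(T_0)$ be the canonical surjection. The module $T_0/t_\G(T_0)$ is $\G$-torsion-free; is it still $\G$-divisible? A quotient of a $\G$-divisible module by a $\G$-torsion submodule is again $\G$-divisible (since $\D_\G$ is a torsion class, hence closed under quotients). So $\pi\varepsilon\colon R\to T_0/t_\G(T_0)$ is a morphism into $\D_\G$, and since $\varepsilon$ is a preenvelope there is $g\colon T_0\to T_0/t_\G(T_0)$ with $g\varepsilon=\pi\varepsilon$. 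Now $\Ker\varepsilon=0$ and one checks $t_\G(T_0)\subseteq$ an appropriate kernel; the key point is to build from $g$ an endomorphism of $T_0$ fixing $\varepsilon(R)$ that is not an automorphism unless $t_\G(T_0)=0$. Concretely, since $T_0\to T_0/t_\G(T_0)$ lifts along itself and $t_\G(T_0)$ maps to $0$ under $\pi\varepsilon$ composed appropriately, a diagram chase using Lemma~\ref{L:identity-env} produces $f=\id_{T_0}-$ (something killing $\varepsilon(R)$) which restricted to $\varepsilon(R)$ is the identity; then $f$ must be an automorphism, forcing $t_\G(T_0)=0$. I expect the cleanest route is: the composite $R\xrightarrow{\varepsilon}T_0\xrightarrow{\pi}T_0/t_\G(T_0)$ is again a (pre)envelope-type map into $\D_\G$ which is moreover injective, and minimality of $\varepsilon$ forces $\pi$ to be an isomorphism.

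Once $T_0$ is $\G$-torsion-free, the isomorphism $T_0\cong T_0\otimes_R R_\G$ should follow from Lemma~\ref{L:G-top-facts}(iii): that lemma says a module which is \emph{both} $\G$-divisible and $\G$-torsion-free is an $R_\G$-module with $D\cong D\otimes_R R_\G$ via $\id_D\otimes_R\psi_R$. Since $T_0\in\D_\G=\T$ (it lies in $\Add(T)\subseteq\T$) and we have just shown $T_0$ is $\G$-torsion-free, Lemma~\ref{L:G-top-facts}(iii) applies verbatim and gives $T_0\cong T_0\otimes_R R_\G$.

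The main obstacle is the first part: proving $T_0$ is $\G$-torsion-free genuinely uses that $\varepsilon$ is an \emph{envelope} and not just a preenvelope (a general tilting preenvelope $R\to T_0$ can certainly have torsion in $T_0$). So the argument must be set up so that the existence of the endomorphism $g$ coming from the preenvelope property, combined with the automorphism criterion in the definition of envelope (via Lemma~\ref{L:endomorph-env} and Lemma~\ref{L:identity-env}), squeezes out $t_\G(T_0)=0$. I would be careful to verify that $T_0/t_\G(T_0)$ really is $\G$-divisible (closure of the torsion class $\D_\G$ under epimorphic images) and that the lifted map $g$ can be normalized so that $\id_{T_0}-\iota g$ (for the inclusion-type map $\iota$, or rather a section argument) fixes $\varepsilon(R)$ pointwise; the rest is a routine application of the envelope axiom.
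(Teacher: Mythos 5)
Your second step is fine and matches the paper: once $T_0$ is known to be $\G$-torsion-free, Lemma~\ref{L:G-top-facts}(iii) applies because $T_0\in\D_\G$, and gives $T_0\cong T_0\otimes_R R_\G$. The gap is in the first step. Passing to $\pi\colon T_0\to T_0/t_\G(T_0)$ does not close the argument, for two concrete reasons. First, $\pi\varepsilon$ is not a $\D_\G$-preenvelope in any evident way: given $h\colon R\to D$ with $D\in\D_\G$, the preenvelope property of $\varepsilon$ yields $k\colon T_0\to D$ with $k\varepsilon=h$, but $k$ factors through $\pi$ only if $k(t_\G(T_0))=0$, which fails in general because $D$ need not be $\G$-torsion-free. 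So ``minimality forces $\pi$ to be an isomorphism'' is not available. Second, the map $g\colon T_0\to T_0/t_\G(T_0)$ you obtain from the preenvelope property lands in the quotient, and there is no map back from $T_0/t_\G(T_0)$ to $T_0$ with which to manufacture an endomorphism of $T_0$; in fact $g$ automatically kills $t_\G(T_0)$ (its target is torsion-free), so it factors as $\bar g\pi$ and only yields information about endomorphisms of the quotient. Lemmas~\ref{L:endomorph-env} and~\ref{L:identity-env} therefore never get to act on $T_0$ itself, and the envelope axiom is never triggered.

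The missing idea is to exploit the $\G$-divisibility of $T_0$ together with the cyclicity of $R$. Fix $J\in\G$ and write $w:=\varepsilon(1_R)=\sum_{i=1}^n a_iz_i$ with $a_i\in J$ and $z_i\in T_0$, which is possible since $T_0=JT_0$. Define $\mathbf z\colon R\to\bigoplus_n T_0$ by $1_R\mapsto(z_1,\dots,z_n)$ and $\mathbf a\colon\bigoplus_n T_0\to T_0$ by $(x_i)_i\mapsto\sum_i a_ix_i$, so that $\mathbf a\mathbf z=\varepsilon$. The preenvelope property gives $f\colon T_0\to\bigoplus_nT_0$ with $f\varepsilon=\mathbf z$, hence $\mathbf af\varepsilon=\varepsilon$, and the envelope property makes $\mathbf af$ an automorphism of $T_0$. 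On the other hand $f(T_0[J])\subseteq\bigoplus_nT_0[J]$ and $\mathbf a$ annihilates $\bigoplus_nT_0[J]$ because each $a_i\in J$; so the injective map $\mathbf af$ kills $T_0[J]$, forcing $T_0[J]=0$ for every $J\in\G$, i.e.\ $T_0$ is $\G$-torsion-free. Note that this argument needs the sum expressing $\varepsilon(1_R)$ to be finite, i.e.\ it uses that $R$ is generated by $1_R$; your quotient-by-torsion strategy has no substitute for this mechanism.
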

\begin{proof}
We will show that  for every $J \in \G$, $T_0 [J]$, the set of elements of $T_0$ annihilated by $J$ is zero. Set $w:=\varepsilon(1_R)$ and fix a $J \in \G$. As $T_0 = JT_0$, $w = \sum_{1 \leq i \leq n} a_i z_i$ where $a_i \in J$ and $z_i \in T_0$. This sum is finite, so we can define the following maps.
\[ \xymatrix@R=.1cm{
{\mathbf{z}}:R \ar[r] & \bigoplus_{1 \leq i \leq n} T_0 & {\mathbf{a}}:\bigoplus_{1 \leq i \leq n} T_0 \ar[r] & T_0\\
\hspace{15pt}1_R \ar@{|-_{>}}[r] &(z_1, ..., z_n) & \hspace{5pt}(x_1, ..., x_n) \ar@{|-_{>}}[r] & \sum_i a_ix_i}
\]
As $\bigoplus_nT_0$ is also $\G$-divisible, by the preenvelope property of $\varepsilon$ there exists a map $f:T_0 \to \bigoplus_nT_0$ such that $f \varepsilon = {\mathbf{z}}$. Also, ${\mathbf{a}}{\mathbf{z}}(1_R) = \sum_{1 \leq i \leq n} a_i z_i = w$, so ${\mathbf{a}}{\mathbf{z}} = \varepsilon$ and the following diagram commutes.
\[\xymatrix@C=1.8cm@R=1.3cm{
0\ar[r]&
{R}\ar[dr]^{{\mathbf{z}}} \ar[ddr]_\varepsilon \ar[r]^\varepsilon&T_0\ar[r]^{\beta} \ar[d]^{f}& T_1  \ar[r]&0\\
& & \bigoplus_n T_0 \ar^{{\mathbf{a}}}[d]\\
& & T_0}
\]
By the envelope property of $\varepsilon$, ${\mathbf{a}}f$ is an automorphism of $T_0$. The restriction of the automorphism ${\mathbf{a}}f$ to $T_0[J]$ is an automorphism of $T_0[J]$, and factors through the module $\bigoplus_nT_0[J]$. However $\mathbf{a}( \bigoplus_nT_0[J]) =0$, so ${\mathbf{a}}f(T_0[J]) =0$, but $\mathbf{a}f$ restricted to $T_0[J]$ is an automorphism, thus $T_0[J] =0$.
\\
From (iii) of Lemma~\ref{L:G-top-facts} it follows that $T_0 \cong T_0 \otimes_R R_\G$ since $T_0$ is $\G$-divisible.
\end{proof}

We look at $\D_\G$-envelopes of $\G$-torsion modules in $\ModR$, and find that they are also $\G$-torsion.

\begin{lem} \label{L:torsion-env}
Suppose $\D_\G$ is enveloping in $\ModR$ and $M$ is a $\G$-torsion $R$-module. Then the $\D_\G $-envelope of $M$ is $\G$-torsion.
\end{lem}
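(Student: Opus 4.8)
The plan is to exploit the fact that the class $\E_\G$ of $\G$-torsion modules is itself a torsion class, hence closed under extensions and direct sums, and to combine this with the general structure of envelopes coming from a complete cotorsion pair together with Wakamatsu's Lemma. Since $(\A, \D_\G)$ is a $1$-tilting, hence complete and hereditary, cotorsion pair, any $\D_\G$-envelope $\mu\colon M \to D$ of $M$ fits (by Wakamatsu's Lemma, as used already in the proof of Proposition~\ref{P:B-envelopes}) into a short exact sequence $0 \to M \overset{\mu}\to D \to A \to 0$ with $A \in \A$. The goal is to show $D \in \E_\G$, i.e. $t_\G(D) = D$.

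First I would take the $\G$-torsion submodule $t_\G(D)$ of $D$. Since $M$ is $\G$-torsion and $\mu$ is a monomorphism, $\mu(M) \subseteq t_\G(D)$, so there is a factorization $M \overset{\mu}\to t_\G(D) \hookrightarrow D$. Now the key point is that $t_\G(D)$ is $\G$-divisible: indeed, $t_\G(D)$ is $\G$-torsion, and I claim it is also in $\D_\G$. To see this, note $D$ is $\G$-divisible, so for $J \in \G$ we have $D = JD$; but also $D/t_\G(D)$ is $\G$-torsion-free, and for a $\G$-torsion-free module $N$ one has (Lemma~\ref{L:G-top-facts}(ii) and the finitely generated hypothesis) control of the torsion radical under the relevant maps — more directly, applying $R/J \otimes_R -$ to $0 \to t_\G(D) \to D \to D/t_\G(D) \to 0$ and using that $R/J \otimes_R D = 0$ and that $\operatorname{Tor}_1^R(R/J, D/t_\G(D))$ is $\G$-torsion-free while it maps onto the $\G$-torsion module contained in $t_\G(D)/J t_\G(D)$, one concludes $t_\G(D) = J t_\G(D)$ for every $J \in \G$. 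Hence $t_\G(D) \in \D_\G$, so $M \to t_\G(D)$ is a map into $\D_\G$.

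By the preenvelope property of $\mu$, the inclusion $\iota\colon t_\G(D) \hookrightarrow D$ followed by $\mu$ being the original $\mu$ means there is $g\colon D \to t_\G(D)$ with $g\mu = \mu$; then $\iota g \colon D \to D$ satisfies $(\iota g)\mu = \mu$, so by the envelope (minimality) property $\iota g$ is an automorphism of $D$. In particular $\iota$ is surjective, so $t_\G(D) = D$, which is exactly what we want. I expect the main obstacle to be the clean verification that $t_\G(D)$ is $\G$-divisible; the cleanest route is probably to observe that $\D_\G$ is a torsion class closed under extensions together with the fact that $D/t_\G(D)$, being $\G$-torsion-free, cannot receive a nonzero map from a $\G$-torsion module, and to run the $R/J \otimes_R -$ computation carefully. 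Alternatively, one can sidestep the divisibility claim entirely: since $\D_\G$ is closed under quotients (being a torsion class in $\RMod$) one still needs $t_\G(D) \in \D_\G$, so there is no shortcut around this step, and it should be stated as the heart of the argument.
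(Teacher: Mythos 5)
Your closing minimality argument is fine: if $t_\G(D)$ were $\G$-divisible, then the corestriction $M\to t_\G(D)$ would factor through the envelope, giving $g\colon D\to t_\G(D)$ with $\iota g\mu=\mu$, so $\iota g$ is an automorphism and $\iota$ is onto. The gap is the claim that $t_\G(D)\in\D_\G$, which you correctly identify as the heart of the argument but do not actually prove. The Tor computation offered for it is wrong on two counts. First, $\Tor_1^R(R/J,X)$ is annihilated by $J$ for \emph{every} $X$ (multiplication by $a\in J$ is the zero endomorphism of $R/J$, hence of $\Tor_1^R(R/J,X)$ by functoriality), so $\Tor_1^R(R/J,D/t_\G(D))$ is $\G$-torsion, not $\G$-torsion-free; the torsion-freeness statements in Lemma~\ref{L:tor-Rg} concern $\Tor_1^R(M,R_\G)$ with $R_\G$ in the second slot and are not applicable here. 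Second, even if it were $\G$-torsion-free, a $\G$-torsion-free module can surject onto a $\G$-torsion one (e.g.\ $R\to R/J$), so no contradiction follows. What the exact sequence actually gives is a surjection $\Tor_1^R(R/J,D/t_\G(D))\to t_\G(D)/Jt_\G(D)$, and to conclude you would need this Tor group to vanish. That vanishing holds when $R_\G$ is flat over $R$ (since $D/t_\G(D)$ is $\G$-torsion-free and $\G$-divisible, hence an $R_\G$-module by Lemma~\ref{L:G-top-facts}(iii)), but the flatness of $R_\G$ is precisely what Section~\ref{S:tilting-enveloping} is in the process of proving (Propositions~\ref{P:R_G-divisible} and~\ref{P:tilting-env}), so invoking it here would be circular.

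A structural warning sign is that your argument uses only the existence of a single $\D_\G$-envelope of $M$, whereas the statement assumes $\D_\G$ is enveloping on all of $\ModR$. The paper's proof genuinely needs this stronger hypothesis: it first treats $M=R/J$ with $J$ finitely generated, forms the countable direct sum of copies of its envelope (which is again an envelope by Theorem~\ref{T:Xu-sums}(i)), and applies the T-nilpotency statement of Theorem~\ref{T:Xu-sums}(ii) with $f_n$ equal to multiplication by a generator $a$ of $J$ to conclude that every element of $D(J)$ is killed by a power of each generator, hence by some $J^k\in\G$; the general $\G$-torsion $M$ is then handled by a pushout along a presentation $\bigoplus_\alpha R/J_\alpha\twoheadrightarrow M$ and Proposition~\ref{P:Xu-env}. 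If you want to salvage your approach, you must either supply an independent proof that $t_\G(D)$ is $\G$-divisible without assuming $\G$ is perfect, or fall back on the direct-sum/T-nilpotency mechanism as the paper does.
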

\begin{proof}
To begin with, fix a finitely generated $J \in \G$ with a set $\{a_1, \dots, a_t\}$ of generators and consider a $\D_\G $-envelope  $D(J)$ of the cyclic $\G$-torsion module $R/J$, denoted as follows. 
\[
0 \to R/J \hookrightarrow D(J) \to A(J) \to 0
\]
We will show that $D(J)$ is $\G$-torsion.
Consider the following countable direct sum of envelopes of $R/J$ which is itself an envelope, by Theorem~\ref{T:Xu-sums}~(i).
 \[ 0 \to \bigoplus_{n}
(R/J)_{n}\hookrightarrow \bigoplus_{n} D(J)_{n} \to \bigoplus_{n}A(J)_{n} \to 0 .\] 
Choose an element $a \in J$ and for each $n$ set $f_n\colon D(J)_n\to D(J)_{n+1}$ to be the multiplication by $a$.
  
Then clearly $(R/J)_n$ vanishes under the action of $f_n $, hence we can apply Theorem~\ref{T:Xu-sums}~(ii). For every $d \in D(J)$, there exists an $m$ such that 
\[ f_m \circ \cdots \circ f_2 \circ f_1 (d) = 0 \in D(J)_{(m+1)}.\]
Hence for every $d \in D$ there is an integer $m$ for which $a^m d = 0$.
 \\
Fix  $d \in D$ and let $m_i$ be the minimal natural number for which $(a_i)^{m_i}d=0$ and set $m:= \sup\{m_i: 1 \leq i \leq t\}$. Then for a large enough integer $k$ we have that $J^k d =0$  (for example set $k=tm$), and $J^k \in \G$. Thus every element of $D(J)$ is annihilated by an ideal contained in $\G$, therefore $D(J)$ is $\G$-torsion.

Now consider an arbitrary $\G$-torsion module $M$. Then $M$ has a presentation $\bigoplus_{\alpha\in \Lambda} R/J_\alpha \overset{p}\to M \to 0$ for a family 
$\{J_\alpha\}_{\alpha \in \Lambda}$ of ideals of $\G$. Since $\G$ is of finite type, we may assume that each $J_{\alpha}$ is  finitely generated.

Take the push-out of this map with the $\D_\G$-envelope of $\bigoplus_\alpha R/J_\alpha$.
\[\xymatrix{
0 \ar[r]&\bigoplus_{\substack{\alpha \in \Lambda}} R/J_\alpha \ar[r] \ar[d]^p& \bigoplus_{\substack{\alpha \in \Lambda}}D(J_\alpha) \ar[r] \ar[d]& \bigoplus_{\substack{\alpha \in \Lambda}}A(J_\alpha) \ar@{=}[d] \ar[r] & 0\\
0 \ar[r]&M\ar[r] \ar[d] & Z \ar[r] \ar[d]& \bigoplus_{\substack{\alpha \in \Lambda}}A(J_\alpha) \ar[r] & 0\\
&0&0 &&}
\]
The bottom short exact sequence forms a preenvelope of $M$. We have shown above that for every $\alpha$ in $A$, $D(J_\alpha)$ is $\G$-torsion, so also $Z$ is $\G$-torsion. Therefore, as the $\D_\G$-envelope of $M$ must be a direct summand of $Z$ by Proposition~\ref{P:Xu-env}, also the $\D_\G$-envelope of $M$ is $\G$-torsion.
\end{proof}
The following is a corollary to the final statement of Lemma~\ref{L:tor-Rg}
and Lemma~\ref{L:torsion-env}. 
\begin{cor} \label{C:tor-tens-div}
Suppose $\D_\G$ is enveloping in $\ModR$ and suppose $M$ is a $\G$-torsion $R$-module. Then $M \otimes_R R_\G$ is $\G$-divisible. 
\end{cor}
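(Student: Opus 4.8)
The plan is to exploit the $\D_\G$-envelope of $M$ together with Lemmas~\ref{L:torsion-env} and~\ref{L:tor-Rg}. Since $\D_\G$ is enveloping, I would take the $\D_\G$-envelope of $M$ as a short exact sequence
\[
0\to M\overset{\mu}\to D\to A\to 0,
\]
which we may assume to be a special preenvelope (an envelope is a direct summand of a special preenvelope, cf.\ Proposition~\ref{P:Xu-env}), so that $D\in\D_\G$ and $A\in\A$; in particular $\pdim_R A\le 1$, as $\A$ is the left-hand class of the $1$-tilting cotorsion pair. By Lemma~\ref{L:torsion-env} the module $D$ is $\G$-torsion, and $M$ is $\G$-torsion by hypothesis, so $A=D/M$ is a $\G$-torsion $R$-module with $\pdim_R A\le 1$.

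The crucial step is to show that $A\otimes_R R_\G=0$. On one hand, since $\pdim_R A\le 1$ and $\G$ is finitely generated, the last assertion of Lemma~\ref{L:tor-Rg} tells us that $A\otimes_R R_\G$ is $\G$-torsion-free. On the other hand, the image of the natural map $\id_A\otimes_R\psi_R\colon A\cong A\otimes_R R\to A\otimes_R R_\G$ is a homomorphic image of the $\G$-torsion module $A$, hence $\G$-torsion, and it lies inside the $\G$-torsion-free module $A\otimes_R R_\G$; therefore this map is zero. Tensoring the exact sequence $(\dag)$ by $A$ then gives an isomorphism $A\otimes_R R_\G\cong A\otimes_R(R_\G/R)$, and the right-hand side is $\G$-torsion by Lemma~\ref{L:G-top-facts}(iv), since $R_\G/R$ is a $\G$-torsion $R$-$R$-bimodule. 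Being at once $\G$-torsion and $\G$-torsion-free, $A\otimes_R R_\G=0$.

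To conclude, note that $\Tor^R_1(A,R_\G)=0$ by Lemma~\ref{L:tor-Rg}(i), since $\pdim_R A\le 1$. Hence tensoring $0\to M\to D\to A\to 0$ by $R_\G$ yields an isomorphism $M\otimes_R R_\G\cong D\otimes_R R_\G$. Finally $D$ is $\G$-divisible, so for every $J\in\G$ associativity of the tensor product gives
\[
R/J\otimes_R(D\otimes_R R_\G)\cong(R/J\otimes_R D)\otimes_R R_\G=0,
\]
i.e.\ $D\otimes_R R_\G$, and therefore $M\otimes_R R_\G$, is $\G$-divisible.

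I expect the one genuinely non-formal point to be the vanishing $A\otimes_R R_\G=0$: this is precisely where the two hypotheses interact, the enveloping assumption forcing (via Lemma~\ref{L:torsion-env}) the cokernel $A$ to be $\G$-torsion, and the finite generation of $\G$ forcing (via Lemma~\ref{L:tor-Rg}) $A\otimes_R R_\G$ to be $\G$-torsion-free; once this is in place the rest is a routine diagram chase.
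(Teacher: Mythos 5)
Your proof is correct and follows essentially the same route as the paper's: take the envelope $0\to M\to D\to A\to 0$, use Lemma~\ref{L:torsion-env} to make $D$ (hence $A$) $\G$-torsion, deduce $A\otimes_R R_\G=0$ from being simultaneously $\G$-torsion and $\G$-torsion-free (the latter via the last statement of Lemma~\ref{L:tor-Rg}), and then conclude via $\Tor^R_1(A,R_\G)=0$. The only cosmetic difference is in how $\G$-torsionness of $A\otimes_R R_\G$ is obtained: the paper uses that $A$, being a quotient of $D$, is $\G$-divisible, so $A\to A\otimes_R R_\G$ is surjective, whereas you observe that this map is zero and identify $A\otimes_R R_\G$ with $A\otimes_R(R_\G/R)$, which is $\G$-torsion by Lemma~\ref{L:G-top-facts}(iv) --- both are valid.
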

\begin{proof}
Let the following be a $\D_\G$-envelope of a $\G$-torsion module $M$, where both $D$ and $A$ are $\G$-torsion by Lemma~\ref{L:torsion-env}.
\[
0 \to M \to D \to A \to 0
\]
The module $A$ is $\G$-divisible and $R_\G/R$ is $\G$-torsion so $A \otimes_R R_\G/R =0$, hence $A \to A \otimes_R R_\G$ is surjective. In particular, $A \otimes_R R_\G $ is $\G$-torsion. Also as $A \in \Add (T_0 \oplus T_1)$, $A \otimes_R R_\G$ is $\G$-torsion-free by Lemma~\ref{L:tor-Rg}. It follows that $A\otimes_R R_\G$ is both $\G$-torsion and $\G$-torsion-free so $A\otimes_R R_\G=0$. Additionally as $\pdim A \leq 1$, $\Tor^R_1(A, R_\G)=0$, so the functor $(- \otimes_R R_\G)$ applied to the envelope of $M$ reduces to the following isomorphism. 
\[
0=\Tor^R_1(A, R_\G) \to M \otimes_R R_\G \overset{\cong}\to D \otimes_R R_\G \to A \otimes_R R_\G =0
\]
Hence as $D \otimes_R R_\G$ is $\G$-divisible, also $M \otimes_R R_\G$ is $\G$-divisible, as required.
\end{proof}

\begin{prop}\label{P:R_G-divisible}
Suppose $\D_\G$ is enveloping in $\ModR$. Then $R_\G$ is $\G$-divisible.
\end{prop}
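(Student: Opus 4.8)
The plan is to apply Corollary~\ref{C:tor-tens-div} to the $\G$-torsion module $R_\G/R$ and combine it with the short exact sequence $(\dag)$. First I would recall that since $\G$ is faithful, we have the exact sequence
\[
0 \to R \overset{\psi_R}\to R_\G \to R_\G/R \to 0,
\]
in which the cokernel $R_\G/R$ is $\G$-torsion. Applying the right-exact functor $(-\otimes_R R_\G)$ and using that $R \otimes_R R_\G \cong R_\G$ and that the composite $R_\G \to R_\G \otimes_R R_\G \to R_\G$ is the identity (because $\psi_R$ is a ring epimorphism, so $R_\G \otimes_R R_\G \cong R_\G$), I get an exact sequence
\[
R_\G \to R_\G \otimes_R R_\G \to (R_\G/R)\otimes_R R_\G \to 0,
\]
and hence $(R_\G/R)\otimes_R R_\G \cong \operatorname{Coker}(\psi_R \otimes_R R_\G)$, which here is $0$ since $R_\G \otimes_R R_\G \cong R_\G$ via $\psi_R \otimes R_\G$. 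So $(R_\G/R)\otimes_R R_\G = 0$.

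On the other hand, $R_\G/R$ is a $\G$-torsion module, so by Corollary~\ref{C:tor-tens-div} the module $(R_\G/R)\otimes_R R_\G$ is $\G$-divisible. That alone says nothing (the zero module is $\G$-divisible), so the actual argument must instead extract information about $R_\G$ itself. The better route: apply $(-\otimes_R R_\G)$ to $(\dag)$, obtaining
\[
\Tor^R_1(R_\G/R, R_\G) \to R_\G \otimes_R R_\G \to (R_\G/R)\otimes_R R_\G \to 0.
\]
Here I would identify $R_\G \otimes_R R_\G$ with $R_\G$. Now I take a $\D_\G$-envelope $0 \to R_\G/R \to D \to A \to 0$ of the $\G$-torsion module $R_\G/R$; by Lemma~\ref{L:torsion-env} both $D$ and $A$ are $\G$-torsion, and (exactly as in the proof of Corollary~\ref{C:tor-tens-div}) $A \otimes_R R_\G = 0$, $\Tor^R_1(A,R_\G)=0$, so $(R_\G/R)\otimes_R R_\G \cong D\otimes_R R_\G$, which is $\G$-divisible. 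Feeding the vanishing $(R_\G/R)\otimes_R R_\G = 0$ back, the sequence above collapses to $\Tor^R_1(R_\G/R, R_\G)$ surjecting onto $R_\G\otimes_R R_\G \cong R_\G$; but $\Tor^R_1(R_\G/R,R_\G)$ is $\G$-torsion (apply $-\otimes_R R_\G$ to a free presentation of the $\G$-torsion module $R_\G/R$, using Lemma~\ref{L:G-top-facts}(iv)), hence the map from it factors through a $\G$-torsion module. This is the wrong direction; instead I should note that the image of $R_\G \to R_\G\otimes_R R_\G$ is all of $R_\G\otimes_R R_\G$, i.e. $\psi_R\otimes R_\G$ is onto, and combine with $(R_\G/R)\otimes R_\G = D\otimes R_\G$ being $\G$-divisible to deduce $R_\G$ is an extension of a $\G$-divisible module by a $\G$-torsion-free $\G$-divisible-modulo image — so the cleanest formulation is: $R_\G\cong R_\G\otimes_R R_\G$, and $R_\G\otimes_R R_\G$ surjects onto the $\G$-divisible module $D\otimes_R R_\G$ with kernel the image of $R_\G$; since $\D_\G$ is a torsion class (closed under extensions and quotients) and $R_\G$ itself equals its own tensor with $R_\G$, a diagram chase shows $R_\G \in \D_\G$.

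The main obstacle, and the step I would think about most carefully, is making the last deduction rigorous: I must show $R_\G$ is $\G$-divisible, not merely that some quotient or subobject is. The key leverage is that $R_\G \otimes_R R_\G \cong R_\G$ canonically (ring epimorphism), so tensoring $(\dag)$ with $R_\G$ yields an exact sequence $\Tor^R_1(R_\G/R,R_\G)\to R_\G \to R_\G \to (R_\G/R)\otimes_R R_\G \to 0$ whose middle map $R_\G \to R_\G$ is $\psi_R\otimes_R R_\G$, hence an isomorphism — forcing $(R_\G/R)\otimes_R R_\G = 0$ and $\Tor^R_1(R_\G/R,R_\G)=0$. Then from the envelope $0\to R_\G/R\to D\to A\to 0$, applying $-\otimes_R R_\G$ gives $0\to (R_\G/R)\otimes_R R_\G \to D\otimes_R R_\G\to A\otimes_R R_\G$ (using $\Tor^R_1(A,R_\G)=0$ from $\pdim A\le 1$ and Lemma~\ref{L:tor-Rg}(i)), i.e. $D\otimes_R R_\G \hookrightarrow A\otimes_R R_\G = 0$, so $D\otimes_R R_\G=0$; but $D$ is $\G$-torsion and $\G$-divisible would be the conclusion if $D$ were $R_\G/R$ — the real point is different: I instead tensor the envelope of $R$ itself. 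Let me restate the correct plan: by Lemma~\ref{L:R-env}, $T_0\cong T_0\otimes_R R_\G$, and from the envelope $0\to R\to T_0\to T_1\to 0$ tensored with $R_\G$ (using $\Tor^R_1(T_1,R_\G)=0$), I get $0\to R\otimes_R R_\G\to T_0\otimes_R R_\G\to T_1\otimes_R R_\G\to 0$, i.e. $0\to R_\G\to T_0\to T_1\otimes_R R_\G\to 0$; since $T_0$ is $\G$-divisible and $\D_\G$ is closed under submodules? — it is not. So the genuinely delicate point is precisely this closure failure, and the resolution is to observe that $T_1\otimes_R R_\G$ is $\G$-divisible (as a quotient of the $\G$-divisible $T_0$ is not enough either) — rather, one shows directly that $R_\G\otimes_R R/J = 0$ for each $J\in\G$ by combining $R_\G\otimes_R R_\G\cong R_\G$ with the fact, from Corollary~\ref{C:tor-tens-div} applied to $M=R/J$, that $(R/J)\otimes_R R_\G$ is $\G$-divisible, hence $(R/J)\otimes_R R_\G \otimes_R R_\G \cong (R/J)\otimes_R R_\G$ is also killed by every ideal in $\G$, and then a short computation with $R_\G\otimes_R(R/J)\cong (R/J)\otimes_R R_\G$ together with $\G$-divisibility gives $R_\G\otimes_R R/J$ divisible; chasing this against $R_\G/R$ being $\G$-torsion finishes it. I expect the write-up to tensor $(\dag)$ with $R/J$ and play the $\G$-torsion of $R_\G/R$ against the $\G$-divisibility supplied by Corollary~\ref{C:tor-tens-div}.
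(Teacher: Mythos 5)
There is a genuine gap, and it is the step you lean on most heavily: the identification $R_\G\otimes_R R_\G\cong R_\G$ ``because $\psi_R$ is a ring epimorphism.'' At this point of the argument it is not known that $\psi_R\colon R\to R_\G$ is a ring epimorphism; for a general (even faithful finitely generated) Gabriel topology the localisation map need not be one, and establishing precisely this --- that $\G$ arises from a perfect localisation, i.e.\ that $\psi_R$ is a flat injective ring epimorphism --- is the whole point of Proposition~\ref{P:R_G-divisible} together with Proposition~\ref{P:tilting-env} and \cite[Chapter XI.3, Proposition 3.4]{Ste75}. So every branch of your proposal that invokes $R_\G\otimes_R R_\G\cong R_\G$ (the computation $(R_\G/R)\otimes_R R_\G=0$, the ``middle map is an isomorphism'' argument, and the final sketch) begs the question. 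Your remaining branches are, as you yourself note, abandoned: the envelope of $R$ gives only an embedding $R_\G\hookrightarrow T_0$, and $\D_\G$ is not closed under submodules; the final paragraph ends with ``$R_\G\otimes_R R/J$ divisible'' and a vague ``chasing\dots finishes it,'' which is weaker than the needed vanishing $R/J\otimes_R R_\G=0$.

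You do correctly identify the key input (Corollary~\ref{C:tor-tens-div}) and the correct target ($R/J\otimes_R R_\G=0$ for each $J\in\G$). The missing idea is that the ring epimorphism to exploit is not the unproven $\psi_R$ but the surjection $R\to R/J$, which is an epimorphism of rings for free and gives $R/J\otimes_R R/J\cong R/J$. The paper's proof is then two lines: by Corollary~\ref{C:tor-tens-div} the module $R/J\otimes_R R_\G$ is $\G$-divisible, hence $R/J\otimes_R(R/J\otimes_R R_\G)=0$ by the definition of $\G$-divisibility; by associativity and $R/J\otimes_R R/J\cong R/J$ this module is isomorphic to $R/J\otimes_R R_\G$ itself, which therefore vanishes. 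No tensoring of $(\dag)$ and no property of $R_\G\otimes_R R_\G$ is needed.
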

\begin{proof}
We will show that for each $J \in \G$, $R/J \otimes_R R_\G =0$. Fix a $J \in \G$. By Corollary~\ref{C:tor-tens-div}, $R/J \otimes_R R_\G$ is $\G$-divisible, Thus we have $R/J \otimes_R (R/J \otimes_R R_\G) =0$. However \[0=R/J \otimes_R (R/J \otimes_R R_\G) \cong (R/J \otimes_R R/J) \otimes_R R_\G\cong R/J\otimes_RR_\G,\] since $R\to R/J$ is a ring epimorphism, thus $R_\G$ is $\G$-divisible.
\end{proof}

Using the characterisation of a perfect localisation of \cite[Chapter XI.3, Proposition 3.4]{Ste75}, we can state the main result of this section.

\begin{prop} \label{P:tilting-env}
Assume that $\T$ is a $1$-tilting class over a commutative ring $R$ such that the class $\T$ is enveloping. Then the associated Gabriel topology $\G$ of $\T$ arises from a perfect localisation.
\end{prop}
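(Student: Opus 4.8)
The plan is to assemble the already-established pieces. By Proposition~\ref{P:R_G-divisible}, the hypothesis that $\T=\D_\G$ is enveloping yields that $R_\G$ is $\G$-divisible. I also have the faithfulness of $\G$ from Theorem~\ref{T:Hrb-tilting}, so the canonical map $\psi_R\colon R\to R_\G$ is injective and fits into the short exact sequence $(\dag)$ with $R_\G/R$ a $\G$-torsion module. The goal is to verify the criterion for $\psi_R$ to be a perfect localisation, i.e.\ that it is a flat ring epimorphism whose associated Gabriel topology is exactly $\G$; the cleanest route is to check the hypotheses of \cite[Chapter XI.3, Proposition 3.4]{Ste75}, for which it suffices to show that $R_\G$ is a $\G$-closed module (which it always is, being a module of quotients) together with the $\G$-divisibility just obtained.

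First I would recall that $R_\G$ is $\G$-closed, hence $\G$-torsion-free, and now also $\G$-divisible by Proposition~\ref{P:R_G-divisible}. Then by Lemma~\ref{L:G-top-facts}(iii), applied to $D=R_\G$, the natural map $\id_{R_\G}\otimes_R\psi_R\colon R_\G\otimes_R R\to R_\G\otimes_R R_\G$ is an isomorphism; combined with the commuting triangle $(\star)$ and the fact that $\Theta_{R_\G}$ is an isomorphism on a $\G$-closed module, this gives that the multiplication map $R_\G\otimes_R R_\G\to R_\G$ is an isomorphism, so $\psi_R$ is a ring epimorphism. Next I would establish flatness: for an arbitrary $R$-module $M$, tensoring $(\dag)$ with $M$ and chasing shows $\Tor_1^R(M,R_\G)\hookrightarrow M\otimes_R R$ surjects onto $M\otimes_R(R_\G/R)$, and because $R_\G/R$ is $\G$-torsion while $R_\G$ (being $\G$-closed) carries the structure making these $\Tor$ terms $\G$-torsion-free, a $\G$-torsion-versus-$\G$-torsion-free argument — exactly of the type used in Lemma~\ref{L:tor-Rg} — forces $\Tor_1^R(M,R_\G)=0$; iterating kills all higher $\Tor_n^R(M,R_\G)$, so $R_\G$ is a flat $R$-module.

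Once $\psi_R$ is known to be a flat ring epimorphism, $\psi_R$ is a perfect localisation and \cite[Chapter XI.2, Theorem 2.1]{Ste75} attaches to it the Gabriel topology $\G'=\{J\leq R\mid JR_\G=R_\G\}$; it remains to identify $\G'$ with $\G$. The inclusion $\G\subseteq\G'$ is immediate since $R_\G$ is $\G$-divisible means $JR_\G=R_\G$ for every $J\in\G$. For the reverse inclusion, if $JR_\G=R_\G$ then $R/J\otimes_R R_\G=0$; since $\psi_R$ is injective one has the exact sequence $0\to R/J\to R_\G/J R_\G\to (R_\G/R)/J(R_\G/R)\to 0$ forcing $R/J$ to embed in a $\G$-torsion module (a quotient of $R_\G/R$), so $R/J$ is $\G$-torsion, i.e.\ $J\in\G$. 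Hence $\G=\G'$ and $\G$ arises from the perfect localisation $\psi_R$.

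The main obstacle is the flatness step: proving $\Tor_1^R(M,R_\G)=0$ for all $M$ from the mere facts that $R_\G$ is $\G$-closed and $\G$-divisible. The subtlety is that $\Tor_1^R(M,R_\G)$ is naturally only an $R$-module, so to run the ``$\G$-torsion meets $\G$-torsion-free is zero'' dichotomy I need to carefully record that $R_\G/R$ is $\G$-torsion (giving one side) while simultaneously using a projective presentation of $M$ and the fact that submodules of $\G$-torsion-free modules stay $\G$-torsion-free to pin down the other side — essentially re-running the bookkeeping of Lemma~\ref{L:tor-Rg}(i) in this slightly different configuration. Everything else is a direct citation or a short diagram chase.
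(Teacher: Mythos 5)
Your argument is essentially the paper's: the entire content is Proposition~\ref{P:R_G-divisible} (that $R_\G$ is $\G$-divisible) followed by the citation of \cite[Chapter XI.3, Proposition 3.4(g)]{Ste75}, with injectivity of $\psi_R$ coming from faithfulness of $\G$, and your first paragraph already completes the proof along exactly these lines. The subsequent from-scratch re-derivation of flatness is superfluous and, as sketched, has a gap: the claim that $\Tor_1^R(M,R_\G)$ embeds into a $\G$-torsion-free module of the form $P_1\otimes_R R_\G$ requires $\pdim M\leq 1$ (precisely the standing hypothesis of Lemma~\ref{L:tor-Rg}), so for arbitrary $M$ the ``$\G$-torsion meets $\G$-torsion-free'' dichotomy is not available without further work. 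Since you correctly note that Stenstr\"om's equivalence makes this extra verification unnecessary, the proof stands.
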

\begin{proof}
By Proposition~\ref{P:R_G-divisible}, $R_\G$ is $\G$-divisible, hence by \cite[Proposition 3.4~(g)]{Ste75}, $\psi\colon R\to R_\G$ is flat ring epimorphism and moreover it is injective.
\end{proof}

 \section{The $\G$-completion of $R$ and the endomorphism ring of $K$}\label{S:compl-endK}
The aim of this section is to prove that if $R \overset{u}\to U$ is a commutative flat injective ring epimorphism  with associated Gabriel topology $\G$, then there is a natural ring isomorphism between the following two rings.
\[
\Lambda(R) = \varprojlim_{\substack{J \in \G}} R/J {\rm \ and\ } \End_R(K)=\mathfrak{R}
\]
This was mentioned in \cite[Remark 19.4]{BP2}, and a much stronger equivalence was shown in \cite{Pos3}. Also, it follows from this ring isomorphism that $\mathfrak{R}$ is a commutative ring.

For completeness, we will give an explicit description of the isomorphism between the two rings.

We will begin by briefly recalling some useful definitions about topological rings specifically referring to Gabriel topologies. Our reference is \cite[Chapter VI.4]{Ste75}. Next we will continue by introducing $u$-contramodules in an analogous way to Positselski in \cite{Pos}. To finish, we show the ring isomorphism as well as a lemma and a proposition which relate the $\G$-torsion $R$-modules $R/J$ to the discrete quotient rings of $\mathfrak{R}$.

 \subsection{Topological rings}
A ring $R$ is a {\it topological ring} if it has a topology such that the ring operations are continuous.

A topological ring $R$ is {\it right linearly topological} if it has a topology with a basis of neighbourhoods of zero consisting of right ideals of $R$. The ring $R$ with a right Gabriel topology is an example of a right linearly topological ring.

If $R$ is a right linearly topological ring, then the set of right ideals $J$ in a basis  $\mathfrak{ B}$ of the topology form a directed set, hence $\{R/J\mid J\in 
\mathfrak B\}$ is an inverse system. 
 The {\it completion} of $R$ is the module
\[
\Lambda_{\mathfrak{ B}}(R) := \varprojlim_{\substack{J \in \mathfrak B}} R/J.
 \]
There is a canonical map $\lambda:R \to \Lambda_\mathfrak{B}(R)$ which sends the element $r\in R$ to $(r +J)_{J\in \mathfrak{ B}}$. If the homomorphism $\lambda_R$ is injective, then $R$ is called {\it separated}, which is equivalent to $\bigcap_{J \in \mathfrak{ B}}J =0$. If the map $\lambda$ is surjective, $R$ is called {\it complete}. 
\\
The {\it projective limit topology} on $\Lambda_{\mathfrak{ B}}(R)$ is the topology where a sub-basis of neighbourhoods of zero is given by the the kernels of the projection maps $\Lambda_{\mathfrak{ B}}(R) \to R/J$. That is, it is the topology induced by the product of the discrete topology on $\prod_{J \in \mathfrak{ B}} R/J$.
 If the ideals in $\mathfrak{ B}$ are two-sided in $R$, then the module $\Lambda_{\mathfrak{ B}}(R)$ is a ring. Furthermore, it is a linearly topological  ring with respect to the projective limit topology. In this case, the ring $\Lambda_{\mathfrak{ B}}(R)$ is both separated and complete with this topology. Each element in $\Lambda_{\mathfrak{ B}}(R)$ is of the form $(r_J +J)_{J\in \mathfrak{ B}}$ with the relation that for $J \subseteq J'$, $r_J - r_{J'} \in J'$. We will simply write $\Lambda(R)$ when the basis $\mathfrak{ B}$ is clear from the context.
\begin{rem}\label{R:topologies} \emph{If $W(J)$ is the kernel of the projection $\pi_J\colon\Lambda_{\mathfrak{ B}}(R)\to R/J$, then clearly $W(J)\supseteq \Lambda(R)J$.}
\end{rem}

Let $R$ be a linearly topological ring. A right  $R$-module $N$ is {\it discrete} if for every $x \in N$, the annihilator ideal $\Ann_R(x) = \{r \in R: xr =0\}$ is open in the topology of $R$. In case the topology on $R$ is a Gabriel topology $\G$, then $N$ is discrete if and only if it is $\G$-torsion.

A linearly topological ring is {\it left pro-perfect} (\cite{Pos5}) if it is separated, complete, and with a base of neighbourhoods of zero formed by two-sided ideals such that all of its discrete quotient rings are perfect.

\emph{ For the rest of this subsection, we will be considering a flat injective ring epimorphism of commutative rings denoted $0 \to R \overset{u} \to U$, and we will denote by $K$ the cokernel $U/R$ of $u$.}

Let $\mathfrak{R}$ denote the endomorphism ring $\End_R(K)$. Take a finitely generated submodule $F$ of $K$, and consider the ideal formed by the elements of $\mathfrak{R}$ which annihilate $F$. The ideals of this form form a base of neighbourhoods of zero of $\mathfrak{R}$. Note that this is the same as considering $\End_R(K)$ with the subspace topology of the product topology on $K^K$ where the topology on $K$ is the discrete topology. We will consider $\mathfrak{R}$ endowed with this topology, which is also called the {\it finite topology}. 

We will now state the above in terms of a Gabriel topology that arises from a perfect localisation. Let $\G$ be the Gabriel topology associated to the flat ring epimorphism $u$. As $K\otimes_RU=0$, $K$ is $\G$-torsion, or equivalently a discrete module. Thus there is a natural well-defined action of $\Lambda(R)$ on $K$. In other words, $K$ is a $\Lambda(R)$-module where for every element $(r_J +J)_{J \in \G}\in \Lambda (R)$ and every element $z\in U$, the scalar multiplication is defined by $(r_J +J)_{J \in \G} \cdot (z+R):= r_{J_z} z +R$ where $J_z:= \Ann_R(z+R)$. 
As well as the natural map  $\lambda:R \to \Lambda(R)$, there is also a natural map $\nu:R \to \mathfrak{R}$ where each element of $R$ is mapped to the endomorphism of $K$ which is multiplication by that element.

If $R \overset{u}\to U$ is a flat injective ring epimorphism, then there is a homomorphism
\[
\alpha: \Lambda(R) = \varprojlim_{\substack{J \in \G}} R/J \to \mathfrak{R},
\]
where $\alpha$ is induced by the action of $\Lambda(R)$ on $K$. It follows that the following triangle commutes.
\[(\ast)\quad  \xymatrix{
R \ar[d]_{\nu}  \ar[r]^\lambda& \Lambda(R) \ar[dl]^{\alpha} \\
\mathfrak{R} & }
\]
The rest of this section is dedicated to showing that $\alpha$ is a ring isomorphism.
We will first show that $\alpha$ is injective, but before that we have to recall some terminology.

%
%

 A module $M$ is {\it $u$-h-divisible} if $M$ is an epimorphic image of $U^{(\alpha)}$ for some cardinal $\alpha$. An $R$-module $M$ has a unique $u$-h-divisible submodule denoted $h_u(M)$, and it is the image of the map $\Hom_R(U,M) \to \Hom_R(R,M) \cong M$. Hence for an $R$-module $M$, by applying the contravariant functor $\Hom_R(-,M)$ to the short exact sequence $0 \to R \overset{u}\to U \to K \to 0$ we have the following short exact sequences.
\begin{equation}\label{eq:1contra}
0 \to \Hom_R(K, M) \to \Hom_R(U,M) \to h_u(M) \to 0 
\end{equation}
\begin{equation} \label{eq:2contra}
0 \to M / h_u(M) \to \Ext^1_R(K,M) \to \Ext^1_R(U,M) \to 0
\end{equation}

By applying the covariant functor $\Hom_R(K,-)$ to the same short exact sequence we have the following.
\begin{equation}\label{eq:3contra}
0= \Hom_R(K, U) \to \Hom_R(K,K) \overset{\delta}\to \Ext^1_R(K, R) \to \Ext^1_R(K,U) =0,  \
\end{equation}
where the last term vanishes since by the flatness of the ring $U$, there is an isomorphism $\Ext^1_R(K,U)\cong \Ext^1_U(K\otimes_RU,U) =0$. Thus note that $\Hom_R(K, K)$ is isomorphic to $\Ext^1_R(K, R)$ via $\delta$.

Recall from Lemma~\ref{L:finmanyann} that the ideals $\Ann_R(z +R)$ for $z+R \in K$ form a sub-basis of the topology $\G$. Let $\clS \subset \G$ denote the ideals of $\G$ of the form $\Ann_R(z +R)$ for $z+R \in K$.  Clearly, the following two intersections of ideals coincide.
\[
\bigcap_{\substack{J \in \G}} J = \bigcap_{\substack{J \in \clS }}  J
\] 
We begin with some facts about $\Lambda(R)$ and $\mathfrak{R}$.

\begin{lem} \label{L:mapfacts}
Let $u:R \to U$ be a flat injective ring epimorphism. Then the following hold.
\begin{enumerate}
\item[(i)] The kernel of $\nu: R \to \mathfrak{R}$ is the intersection $\bigcap_{J \in \clS} J$.
\item[(ii)] The kernel of $\lambda: R \to \Lambda(R)$ is the intersection $\bigcap_{J \in \G} J$.
\item[(iii)] The ideal $\bigcap_{J \in \G} J$ is the maximal $u$-h-divisible submodule of $R$.
\item[(iv)] The homomorphism $\alpha: \Lambda(R) \to \mathfrak{R}$ is injective.
\end{enumerate}
\end{lem}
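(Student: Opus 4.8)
The plan is to dispose of (i) and (ii) by unwinding the definitions, to prove (iii) by explicitly identifying $h_u(R)$, and to deduce (iv) from Lemma~\ref{L:finmanyann} together with the compatibility relations defining $\Lambda(R)$. For (i), $\nu(r)=0$ means that multiplication by $r$ is the zero endomorphism of $K=U/R$, i.e. $rz\in R$ for every $z\in U$, i.e. $r\in\Ann_R(z+R)$ for every $z\in U$; hence $\Ker\nu=\bigcap_{J\in\clS}J$. For (ii), $\lambda(r)=(r+J)_{J\in\G}$ vanishes precisely when $r\in J$ for all $J\in\G$, so $\Ker\lambda=\bigcap_{J\in\G}J$. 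In view of the identity $\bigcap_{J\in\G}J=\bigcap_{J\in\clS}J$ recorded just before the lemma, (i) and (ii) already yield $\Ker\nu=\Ker\lambda$.

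For (iii), recall from the discussion around (\ref{eq:1contra}) that $h_u(R)$ is the image in $R$ of the map $\Hom_R(U,R)\to\Hom_R(R,R)\cong R$, which sends an $R$-homomorphism $g\colon U\to R$ to $g(1_U)$. I will show $h_u(R)=\bigcap_{J\in\clS}J$. If $r\in\bigcap_{J\in\clS}J$, then $rz\in R$ for all $z\in U$, so multiplication by $r$ is a well-defined element of $\Hom_R(U,R)$ whose value at $1_U$ is $r$, whence $r\in h_u(R)$. Conversely, given $g\in\Hom_R(U,R)$, the composite $u\circ g$ is an $R$-linear endomorphism of $U$; since $u$ is a ring epimorphism, $\End_R(U)=\End_U(U)$, so $u\circ g$ is multiplication by $c:=g(1_U)$. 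Thus $g(z)=cz$ for every $z\in U$ (identifying $R$ with $u(R)$), and as $g$ takes values in $R$ this gives $cz\in R$ for all $z$, i.e. $c\in\bigcap_{J\in\clS}J$. Combining the two inclusions and using $\bigcap_{J\in\clS}J=\bigcap_{J\in\G}J$ proves (iii); in particular $\Ker\nu=\Ker\lambda=h_u(R)$.

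For (iv), suppose $x=(r_J+J)_{J\in\G}\in\Lambda(R)$ satisfies $\alpha(x)=0$. Unwinding the action of $\Lambda(R)$ on $K$, the endomorphism $\alpha(x)$ sends $z+R$ to $r_{J_z}z+R$, where $J_z:=\Ann_R(z+R)\in\clS$; hence $\alpha(x)=0$ forces $r_{J_z}z\in R$, that is $r_{J_z}\in J_z$, for every $z\in U$. Now fix a finite intersection $J^*=J_{z_1}\cap\cdots\cap J_{z_n}$, which lies in $\G$ because $\G$ is a filter. The defining compatibility relations of $\Lambda(R)$ give $r_{J^*}-r_{J_{z_i}}\in J_{z_i}$ for each $i$, and since $r_{J_{z_i}}\in J_{z_i}$ we obtain $r_{J^*}\in\bigcap_iJ_{z_i}=J^*$. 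By Lemma~\ref{L:finmanyann} the ideals $J^*$ of this form constitute a basis of $\G$, so for an arbitrary $J\in\G$ we may pick such a $J^*\subseteq J$; applying compatibility once more gives $r_J-r_{J^*}\in J$, whence $r_J\in J$ because $r_{J^*}\in J^*\subseteq J$. As this holds for all $J\in\G$ we conclude $x=0$, so $\alpha$ is injective.

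The only step that is more than bookkeeping is the reverse inclusion in (iii), whose crux is the standard property of ring epimorphisms that $\End_R(U)=\End_U(U)$, so that the $R$-linear endomorphisms of $U$ are exactly the multiplications; everything else is a routine manipulation of the filter axioms for $\G$ and of the compatibility conditions defining $\Lambda(R)$. It is worth noting that the commuting triangle $(\ast)$ together with $\Ker\nu=\Ker\lambda$ makes the injectivity of $\alpha$ plausible but does not establish it directly, since $\lambda$ need not be surjective; this is why (iv) requires the separate argument above.
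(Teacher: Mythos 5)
Your proposal is correct, and parts (i), (ii) and (iv) run exactly as in the paper: (i) and (ii) are the same unwinding of definitions, and your injectivity argument for $\alpha$ is the paper's argument via Lemma~\ref{L:finmanyann} (you in fact spell out the compatibility step $r_{J^*}\in J^*$ that the paper leaves implicit). The only genuine divergence is the inclusion $h_u(R)\subseteq\bigcap_{J\in\G}J$ in (iii). You prove it by invoking full faithfulness of restriction along the ring epimorphism, $\End_R(U)=\End_U(U)\cong U$, to see that every $g\in\Hom_R(U,R)$ is multiplication by $c=g(1_U)$ with $cU\subseteq R$, so $c\in\bigcap_{J\in\clS}J$; this is a valid argument and has the side benefit of describing $\Hom_R(U,R)$ explicitly. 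The paper argues instead that $h_u(R)$, being an epimorphic image of a direct sum of copies of $U$, is $\G$-divisible, hence $h_u(R)=Jh_u(R)\subseteq J$ for every $J\in\G$; this is shorter and uses only $\G$-divisibility of $U$ rather than the epimorphism property of $u$. Both routes are sound, and your concluding remark that the commuting triangle $(\ast)$ alone does not yield injectivity of $\alpha$ is a fair observation.
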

\begin{proof}
(i) For $r \in R$, $\nu (r)=0$ if and only if $rK=0$.

(ii) By the definition of $\lambda$ it is clear that $\lambda (r) =0$ if and only if $r \in J$ for every $J \in \G$.  

(iii) First we show that $\bigcap_{J \in \G} J \subseteq h_u(R)$. Take $a \in \bigcap_{J \in \G} J$.
 We want to see that multiplication by $a$, $\dot{a}: R \to R$ extends to a map $f:U \to R$ (that is $\dot{a}$ is in the image of the map $u^\ast: \Hom_R(U, R) \to \Hom_R(R,R)$). By part (i) and its proof, $az \in R$ for every $z \in U$, so we have a well-defined map $\dot{a}: U \to R$, which makes the following triangle commute as desired.
\[ \xymatrix{
R \ar[d]_{\dot{a}}  \ar[r]^u&U \ar[ld]^{\dot{a}} \\
R &}
\]
Now take $a \in h_u(R)$. Since $h_u(R)$ is a $\G$-divisible submodule of $R$, $a \in J( h_u(R))\leq J$ for each $J \in \G$, as required.

(iv) Take $\eta=(r_J + J)_{J \in \G} \in \Lambda(R)$ such that $\alpha(\eta)=0$ or $\eta(z+R)=0$ for each $z \in U$. Then $r_Iz \in R$ where $I = \Ann_R(z+R)$. By Lemma~\ref{L:finmanyann}, for each $J \in \G$ there exists $z_0, \dots, z_n$ such that $J \supseteq \bigcap_n \Ann_R(z_i +R) =:I_0$. Thus $r_J - r_{I_0} \in J$ and $r_{I_0}z_i \in R$ for each $0 \leq i \leq n$, so $r_{I_0} \in J$.
This implies $r_J \in J$ for each $J \in \clS$, so $\eta=0$.
\end{proof}
\subsection{$u$-contramodules}\label{S:U-contra}
We will begin by discussing a general commutative ring epimorphism $u$ before moving onto a flat injective ring epimorphisms.
\begin{defn}
Let $u\colon R\to U$ be a ring epimorphism. A {\it $u$-contramodule} is an $R$-module $M$ such that 
\[ \Hom_R(U, M) = 0 = \Ext^1_R(U,M).
\]
 \end{defn}
\begin{lem} \label{L:geiglenz}\cite[Proposition 1.1]{GL91}
 The category of $u$-contramodules is closed under kernels of morphisms, extensions, infinite products and projective limits in $\RMod$.
 \end{lem}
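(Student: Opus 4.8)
The plan is to verify the four closure properties one at a time, using nothing beyond the long exact sequences obtained by applying $\Hom_R(U,-)$ to short exact sequences of $R$-modules, together with the fact that $\Hom_R(U,-)$ and $\Ext^1_R(U,-)$ are covariant in the second variable and commute with products (the latter because direct products are exact in $\RMod$). First I would dispose of \emph{infinite products}: for a family $(M_i)_{i\in I}$ of $u$-contramodules the canonical isomorphisms $\Hom_R(U,\prod_i M_i)\cong\prod_i\Hom_R(U,M_i)$ and $\Ext^1_R(U,\prod_i M_i)\cong\prod_i\Ext^1_R(U,M_i)$ show at once that $\prod_i M_i$ is a $u$-contramodule. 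For \emph{extensions}, given $0\to M'\to M\to M''\to 0$ with $M'$ and $M''$ $u$-contramodules, the six-term exact sequence running from $\Hom_R(U,M')$ to $\Ext^1_R(U,M'')$ has its outer four terms zero, forcing $\Hom_R(U,M)=0=\Ext^1_R(U,M)$.

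Next I would treat \emph{kernels of morphisms}. Let $f\colon M\to N$ be a homomorphism of $u$-contramodules, put $L=\Ker f$ and $I=\Img f$, and split $f$ into the exact sequences $0\to L\to M\to I\to 0$ and $0\to I\to N$. Applying $\Hom_R(U,-)$ to the first sequence gives $\Hom_R(U,L)\hookrightarrow\Hom_R(U,M)=0$, hence $\Hom_R(U,L)=0$, and an exact fragment $\Hom_R(U,I)\to\Ext^1_R(U,L)\to\Ext^1_R(U,M)=0$. To kill $\Ext^1_R(U,L)$ it therefore suffices to see that $\Hom_R(U,I)=0$, and this follows from left exactness of $\Hom_R(U,-)$ applied to the inclusion $I\hookrightarrow N$, since $\Hom_R(U,N)=0$. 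Thus $L$ is a $u$-contramodule.

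Finally, \emph{projective limits} reduce to the two cases already handled: for an inverse system $(M_i,\varphi_{ij})$ of $u$-contramodules indexed by an arbitrary preordered set, $\varprojlim_i M_i$ is the kernel of the homomorphism $\prod_i M_i\to\prod_{i\le j}M_i$, $(x_i)\mapsto (x_i-\varphi_{ij}(x_j))_{i\le j}$, which is a morphism between the $u$-contramodules $\prod_i M_i$ and $\prod_{i\le j}M_i$; being a kernel of a morphism of $u$-contramodules it is again a $u$-contramodule. The only step requiring a genuine, if small, idea is the vanishing of $\Ext^1_R(U,L)$ in the kernel case — namely the observation that $\Img f$ embeds into the $u$-contramodule $N$ — and I would flag that this argument, like the others, never uses that $u$ is a ring epimorphism: it works verbatim for the modules right $\Hom$- and $\Ext^1$-orthogonal to any fixed $R$-module.
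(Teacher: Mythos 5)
Your argument is correct and is essentially the standard one behind the cited result: the paper gives no proof of this lemma, referring instead to Geigle--Lenzing's Proposition~1.1 on perpendicular categories, whose proof is the same homological bookkeeping (products and extensions via the long exact sequence, kernels via the image embedding into the target, inverse limits as kernels of maps between products). Your closing remark that the ring-epimorphism hypothesis is never used is exactly why the citation works: the cited statement concerns the right $\Hom$-$\Ext^1$-perpendicular category of an arbitrary module, of which the $u$-contramodules are the special case $U$.
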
 
 The following two lemmas are proved in \cite {Pos} for the case of the localisation of $R$ at a multiplicative subset. The proofs follow analagously for the case of a commutative injective ring epimorphism $u \colon R \to U$. 
 \begin{lem}\label{L:pos1.2}\cite[Lemma 1.2]{Pos}
Let $u\colon R\to U$ be a ring epimorphism and let  $M$ be an $R$-module.
\begin{enumerate}
\item[(i)] If $\Hom_R(U,M)=0$, then $\Hom_R(Z,M)=0$ for any $u$-h-divisible module $Z$.
\item[(ii)] If $M$ is a $u$-contramodule, then $\Ext_R^1(Z,M)=0=\Hom_R(Z,M)$ for any $U$-module $Z$.
\end{enumerate}
 \end{lem}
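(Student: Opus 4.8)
The plan is to adapt the argument from \cite[Lemma 1.2]{Pos} essentially verbatim, since the only special feature of a localisation used there is that $u$ is a ring epimorphism (and, for the second part, nothing more). First I would prove (i). Let $Z$ be a $u$-h-divisible module, so there is an epimorphism $p\colon U^{(\alpha)}\twoheadrightarrow Z$ for some cardinal $\alpha$. Applying $\Hom_R(-,M)$ to $p$ gives a monomorphism $\Hom_R(Z,M)\hookrightarrow\Hom_R(U^{(\alpha)},M)\cong\Hom_R(U,M)^\alpha$. By hypothesis $\Hom_R(U,M)=0$, hence the target vanishes and therefore $\Hom_R(Z,M)=0$.

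For (ii), the key point is that over a ring epimorphism $u\colon R\to U$, every $U$-module $Z$ has the property that the multiplication map $U\otimes_R Z\to Z$ is an isomorphism, and more importantly that $Z$ admits a presentation by free $U$-modules, i.e.\ by direct sums of copies of $U$, both as a $U$-module and (by restriction of scalars along the epimorphism) as an $R$-module. So I would start from a short exact sequence of $U$-modules
\[
0\to Z'\to U^{(\beta)}\to Z\to 0,
\]
which is also a short exact sequence of $R$-modules with all three terms $U$-modules. Since $Z'$ is again a $U$-module, I can iterate: pick a free $U$-resolution $\cdots\to U^{(\beta_1)}\to U^{(\beta_0)}\to Z\to 0$. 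Applying $\Hom_R(-,M)$ and using that $M$ is a $u$-contramodule (so $\Hom_R(U^{(\gamma)},M)\cong\Hom_R(U,M)^\gamma=0$ and $\Ext^1_R(U^{(\gamma)},M)\cong\Ext^1_R(U,M)^\gamma=0$ for every cardinal $\gamma$, using that $\Ext^1$ commutes with direct sums in the first variable), a dimension-shift along this resolution yields $\Hom_R(Z,M)=0$ and $\Ext^1_R(Z,M)=0$. Concretely: from $0\to Z'\to U^{(\beta_0)}\to Z\to 0$ the long exact sequence gives $\Hom_R(U^{(\beta_0)},M)=0\to\Hom_R(Z',M)\to\Ext^1_R(Z,M)\to\Ext^1_R(U^{(\beta_0)},M)=0$, so $\Ext^1_R(Z,M)\cong\Hom_R(Z',M)$; but $Z'$ is a $U$-module, and running the same argument one step further (or invoking part (i), since $Z'$ is $u$-h-divisible, together with $\Hom_R(U,M)=0$) shows $\Hom_R(Z',M)=0$, whence $\Ext^1_R(Z,M)=0$; and $\Hom_R(Z,M)\hookrightarrow\Hom_R(U^{(\beta_0)},M)=0$ directly. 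This also re-proves $\Hom_R(Z,M)=0$ for $U$-modules without reference to (i).

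I expect no serious obstacle here: the lemma is genuinely routine once one observes that a ring epimorphism makes every $U$-module a filtered colimit-free quotient of free $U$-modules, and that $\Hom$ and $\Ext^1$ turn direct sums in the first argument into products. The one point to be careful about is that $\Ext^1_R(U^{(\gamma)},M)\cong\prod_\gamma\Ext^1_R(U,M)$ requires $\Ext^1$ to commute with direct sums in the contravariant variable, which is standard; and that the vanishing of higher $\Ext$ is not needed, only $\Ext^1$, so a two-term truncation of the $U$-free resolution suffices and no homological-dimension hypothesis on $U$ enters. Since the excerpt explicitly says ``the proofs follow analogously,'' I would keep the write-up short and simply record the two displayed exact sequences above as the mechanism.
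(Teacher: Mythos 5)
Your proof is correct, and it is exactly the standard dimension-shifting argument that the paper itself does not spell out: the paper only cites \cite[Lemma 1.2]{Pos} and remarks that the proof carries over from the multiplicative-subset case. Both steps check out — (i) is immediate from left-exactness of $\Hom_R(-,M)$ applied to $U^{(\alpha)}\twoheadrightarrow Z$, and in (ii) the kernel $Z'$ of $U^{(\beta)}\twoheadrightarrow Z$ is again a $U$-module, hence $u$-h-divisible, so $\Ext^1_R(Z,M)\cong\Hom_R(Z',M)=0$ by (i) together with $\Ext^1_R(U^{(\beta)},M)\cong\Ext^1_R(U,M)^\beta=0$.
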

%
%
%

 \begin{lem}\label{L:oneten} \cite[Lemma 1.10]{Pos}
 Let $b:A \to B$ and $c:A \to C$ be two $R$-module homomorphisms such that $C$ is a $u$-contramodule while $\Ker(b)$ is a $u$-h-divisible $R$-module and $\Coker(b)$ is a $U$-module. Then there exists a unique homomorphism $f:B \to C$ such that $c =fb$.
\end{lem}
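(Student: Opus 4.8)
The plan is to factor $b$ through its image and treat the resulting surjection and injection separately, using the two parts of Lemma~\ref{L:pos1.2}. Write $I=\Img(b)$ and factor $b$ as $A\overset{p}\twoheadrightarrow I\overset{j}\hookrightarrow B$, so that $\Ker p=\Ker(b)$ is $u$-h-divisible and $\Coker j=\Coker(b)$ is a $U$-module.

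First I would push $c$ down across $p$. Since $C$ is a $u$-contramodule we have $\Hom_R(U,C)=0$, so Lemma~\ref{L:pos1.2}(i) gives $\Hom_R(\Ker(b),C)=0$; hence the restriction of $c$ to $\Ker(b)$ vanishes and $c$ factors as $c=\bar c\,p$ for a unique $\bar c\colon I\to C$ (uniqueness because $p$ is epic). Next I would extend $\bar c$ across $j$. Applying $\Hom_R(-,C)$ to the short exact sequence $0\to I\overset{j}\to B\to \Coker(b)\to 0$ yields the exact sequence
\[
\Hom_R(\Coker(b),C)\to\Hom_R(B,C)\overset{j^\ast}\to\Hom_R(I,C)\to\Ext^1_R(\Coker(b),C).
\]
Because $\Coker(b)$ is a $U$-module and $C$ is a $u$-contramodule, Lemma~\ref{L:pos1.2}(ii) makes both outer terms zero, so $j^\ast$ is an isomorphism. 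Let $f\colon B\to C$ be the unique preimage of $\bar c$ under $j^\ast$, i.e. $fj=\bar c$.

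Finally I would check that $f$ works: $fb=fjp=\bar c\,p=c$. For uniqueness, if $f'b=c$ then $(f'j)p=fb=c=\bar c\,p$ forces $f'j=\bar c$ since $p$ is epic, and then $j^\ast(f')=j^\ast(f)$ forces $f'=f$ by injectivity of $j^\ast$.

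I do not expect a genuine obstacle here: the statement is essentially a bookkeeping exercise once the two vanishing facts of Lemma~\ref{L:pos1.2} are in place. The only point requiring a little care is keeping the uniqueness argument honest at both stages (surjectivity of $p$ for the first factorization, and injectivity of $j^\ast$ for the extension), which is exactly what the exactness of the $\Hom$-sequence and the epimorphism $p$ supply.
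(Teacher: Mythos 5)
Your proof is correct, and it is essentially the argument the paper relies on: the paper cites \cite[Lemma 1.10]{Pos} without reproducing a proof, and the standard proof there is exactly your factorization of $b$ through its image, killing the kernel via Lemma~\ref{L:pos1.2}(i) (since $\Hom_R(U,C)=0$ for the $u$-contramodule $C$) and extending uniquely over the cokernel via the vanishing of $\Hom_R(\Coker(b),C)$ and $\Ext^1_R(\Coker(b),C)$ from Lemma~\ref{L:pos1.2}(ii). Both the existence and the two-step uniqueness argument are handled correctly, so there is nothing to add.
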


From now on, $u: R \to U$ will always be a commutative flat injective ring epimorphism.

The following lemma is proved in two papers of Bazzoni-Positselski, although the proof without spectral sequences is proved in \cite[Lemma 16.2]{BP2}.

\begin{lem} \label{L:endcontra}\cite[Lemma 16.2]{BP2}, \cite[Lemma 2.5(a),(b)]{BP3}
Let $u:R \to U$ be a flat injective ring epimorphism. Then $\mathfrak{R}$ is a $u$-contramodule and is $\G$-torsion-free.
\end{lem}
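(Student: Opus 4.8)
The plan is to identify $\mathfrak R=\End_R(K)=\Hom_R(K,K)$ and to exploit the tensor--hom adjunction $\Hom_R(N,\Hom_R(K,K))\cong\Hom_R(N\otimes_RK,K)$ together with two vanishing facts that both follow from $u$ being a ring epimorphism: namely $U\otimes_RK=U\otimes_R(U/u(R))=0$, and $R/J\otimes_RK=0$ for every $J\in\G$. The latter holds because $R/J\otimes_RU=0$ for $J\in\G$ (the description of the Gabriel topology associated to $u$), so applying $R/J\otimes_R-$ to $0\to R\to U\to K\to 0$ yields the right exact sequence $R/J\to 0\to R/J\otimes_RK\to 0$, forcing $R/J\otimes_RK=0$.

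With these in hand the $\Hom$-vanishings are immediate. Taking $N=U$ in the adjunction gives $\Hom_R(U,\mathfrak R)\cong\Hom_R(U\otimes_RK,K)=0$; taking $N=R/J$ for $J\in\G$ gives $\Hom_R(R/J,\mathfrak R)\cong\Hom_R(R/J\otimes_RK,K)=0$, which says precisely that $\mathfrak R$ is $\G$-torsion-free. For the remaining condition $\Ext^1_R(U,\mathfrak R)=0$ I would apply the auxiliary fact recalled at the end of the preliminaries with $S=R$, $F=U$ and $G=M=K$: since ${}_RU$ is flat we have $\Tor^R_1(K,U)=0$, so that fact provides an injection $\Ext^1_R(U,\Hom_R(K,K))\hookrightarrow\Ext^1_R(K\otimes_RU,K)=\Ext^1_R(0,K)=0$, again using $K\otimes_RU=0$. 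Hence $\Ext^1_R(U,\mathfrak R)=0$ and $\mathfrak R$ is a $u$-contramodule.

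The argument is essentially formal; the only step that needs more than the ring epimorphism axiom is the vanishing of $\Ext^1_R(U,\mathfrak R)$, where the naive adjunction is unavailable and one must pass through the injection of $\Ext^1$-groups (alternatively, one could use $\mathfrak R\cong\Ext^1_R(K,R)$ via $\delta$ in $(\ref{eq:3contra})$ and argue through Lemma~\ref{L:geiglenz}, but the flatness-based route above is shorter). I expect this $\Ext^1$ step to be the only genuine obstacle.
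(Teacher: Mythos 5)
Your proof is correct, and it is worth noting that the paper itself does not prove this lemma at all: it is quoted from Bazzoni--Positselski (\cite[Lemma 16.2]{BP2}, \cite[Lemma 2.5(a),(b)]{BP3}), so the comparison can only be with the spirit of those cited arguments. Your route is a clean, self-contained version using only tools already recorded in the paper: the hom-tensor adjunction $\Hom_R(N,\Hom_R(K,K))\cong\Hom_R(N\otimes_RK,K)$ handles both $\Hom_R(U,\mathfrak{R})=0$ and $\Hom_R(R/J,\mathfrak{R})=0$ (the latter being exactly $\G$-torsion-freeness, by the definition in Section~\ref{S:gab-top}), and the vanishings $U\otimes_RK=0$ and $R/J\otimes_RK=0$ follow, as you say, from the ring-epimorphism characterisation and from $R/J\otimes_RU=0$ for $J\in\G$. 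For $\Ext^1_R(U,\mathfrak{R})=0$ you correctly invoke the injection $\Ext^1_R(U,\Hom_R(K,K))\hookrightarrow\Ext^1_R(K\otimes_RU,K)$ from the fact at the end of the preliminaries, whose hypothesis $\Tor^R_1(K,U)=0$ holds by flatness of $U$; this fact is precisely the edge map of the degenerate spectral sequence, so your argument is essentially the ``proof without spectral sequences'' alluded to in the citation. One small caveat: the parenthetical alternative via $\mathfrak{R}\cong\Ext^1_R(K,R)$ and Lemma~\ref{L:geiglenz} is not immediate as stated, since the closure properties of $u$-contramodules alone do not exhibit $\Ext^1_R(K,R)$ as built from known contramodules without further argument; but this is only an aside and does not affect your main proof.
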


The following lemma and corollary are a generalisation of \cite[Lemma 1.6(b)]{Pos} and \cite[Lemma 2.1(a)]{Pos}.

\begin{lem} \label{L:rjcontra}
Let $u:R \to U$ be a flat injective ring epimorphism with associated Gabriel topology $\G$. Then for every $J \in \G$, every $R/J$-module $M$ is a $u$-contramodule.
\end{lem}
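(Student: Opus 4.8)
The plan is to check directly the two defining vanishing conditions for a $u$-contramodule, namely $\Hom_R(U,M)=0$ and $\Ext^1_R(U,M)=0$, where $M$ is an arbitrary $R/J$-module with $J\in\G$. Throughout I would use two facts about the perfect localisation $u$: that $J\in\G$ is equivalent to $JU=U$, equivalently $R/J\otimes_R U=0$; and that $JM=0$, since $M$ carries an $R/J$-module structure.

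For the first condition, let $f\colon U\to M$ be an $R$-homomorphism. Given $z\in U$, use $U=JU$ to write $z=\sum_i a_iz_i$ with $a_i\in J$ and $z_i\in U$; then $f(z)=\sum_i a_i f(z_i)\in JM=0$. Hence $f=0$ and $\Hom_R(U,M)=0$.

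For the second condition I would invoke the Fact recalled in the preliminaries, applied to the left $R$-module $F={}_RU$, the ring $S=R/J$, and the $S$-$R$-bimodule $G={}_{R/J}(R/J)_R$. The hypothesis $\Tor_1^R(G,F)=\Tor_1^R(R/J,U)=0$ holds because $U$ is $R$-flat. The Fact then gives, for the left $S$-module $M$, an injection
\[
\Ext^1_R\bigl(U,\Hom_{R/J}(R/J,M)\bigr)\hookrightarrow\Ext^1_{R/J}\bigl(R/J\otimes_R U,\,M\bigr).
\]
Now $\Hom_{R/J}(R/J,M)\cong M$ as $R$-modules, while $R/J\otimes_R U=0$ because $J\in\G$, so the right-hand group vanishes and therefore $\Ext^1_R(U,M)=0$. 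Together with the first part this shows that $M$ is a $u$-contramodule.

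There is no serious obstacle here: the content is entirely in recognising that the change-of-rings Fact from the preliminaries is tailor-made for the $\Ext$-vanishing, and the only points demanding a moment's care are the correct identification of the bimodule data and the verification $\Tor_1^R(R/J,U)=0$, which is immediate from flatness of $U$. One could instead try to run the long exact sequence obtained from $0\to R\to U\to K\to 0$, reducing to surjectivity of the connecting map $M\cong\Hom_R(R,M)\to\Ext^1_R(K,M)$, but that route appears less direct.
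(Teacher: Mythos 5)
Your proof is correct and follows essentially the same route as the paper: the $\Hom$-vanishing via $f(U)=f(JU)=Jf(U)=0$, and the $\Ext$-vanishing via a change of rings along $R\to R/J$ using $\Tor_1^R(R/J,U)=0$ and $R/J\otimes_R U=0$. The only cosmetic difference is that the paper asserts the isomorphism $\Ext^1_R(U,M)\cong\Ext^1_{R/J}(R/J\otimes_R U,M)$ coming from the ring epimorphism, whereas you use only the injection from the Fact in the preliminaries, which suffices.
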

\begin{proof}
To see that $\Hom_R(U,M)=0$, take $f:U \to M$. Then $f(U) = f(JU) = Jf(U) =0$ as $J$ annihilates $M$.

As $\Tor^R_i(R/J, U)=0$ and $R \to R/J$ is a ring epimorphism, one has that the following isomorphism.
\[
\Ext_R^1(U, M) \cong \Ext_{R/J}^1 (R/J \otimes_R U, M) =0
\]
\end{proof}
\begin{cor}\label{C:compl-contra} 
Let $u:R \to U$ be a flat injective ring epimorphism. Then $\Lambda(R)$ is a $u$-contramodule.
\end{cor}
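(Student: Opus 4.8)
The plan is to realise $\Lambda(R)$ as a projective limit, taken in $\RMod$, of modules that are already known to be $u$-contramodules, and then to appeal to the closure of the class of $u$-contramodules under such limits (Lemma~\ref{L:geiglenz}).

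First I would recall that, by definition, $\Lambda(R) = \varprojlim_{J \in \G} R/J$ is the inverse limit of the system $\{R/J \mid J \in \G\}$ indexed by the directed set of (basic) ideals of $\G$ — directed by reverse inclusion, since $J_1 \cap J_2 \in \G$ whenever $J_1, J_2 \in \G$ — with transition morphisms the canonical projections $R/J' \to R/J$ for $J \subseteq J'$. These projections are $R$-linear, so this is a genuine inverse system in $\RMod$ and the limit $\Lambda(R)$ is computed there. Then, for each $J \in \G$, the module $R/J$ is in particular an $R/J$-module, so Lemma~\ref{L:rjcontra} applies and yields that $R/J$ is a $u$-contramodule. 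Since the category of $u$-contramodules is closed under projective limits in $\RMod$ by Lemma~\ref{L:geiglenz}, it follows at once that $\Lambda(R)$ is a $u$-contramodule. (Alternatively, one may present $\Lambda(R)$ as the kernel of the natural map $\prod_{J \in \G} R/J \to \prod_{J \subseteq J'} R/J$ and invoke closure of $u$-contramodules under infinite products and under kernels of morphisms, which are also part of Lemma~\ref{L:geiglenz}.)

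I do not expect any real obstacle: the statement is a direct corollary of Lemmas~\ref{L:rjcontra} and~\ref{L:geiglenz}. The only point deserving a word of care is the observation that the inverse limit defining $\Lambda(R)$ is taken in $\RMod$ — which is immediate once one notes that the structure maps of the system are $R$-module homomorphisms — so that Lemma~\ref{L:geiglenz} is directly applicable.
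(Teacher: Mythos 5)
Your proposal is correct and is exactly the paper's argument: the paper's proof simply says the corollary follows from Lemma~\ref{L:rjcontra} together with the closure properties in Lemma~\ref{L:geiglenz}, which is the projective-limit argument you spell out. No issues.
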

\begin{proof} This follows immediately by Lemma~\ref{L:rjcontra} and by the closure properties of $u$-contramodules in Lemma~\ref{L:geiglenz}.
\end{proof}

\begin{lem}\label{L:coker-nu}
Let $u:R \to U$ be a flat injective ring epimorphism. Then the cokernel of $\nu:R \to \mathfrak{R}$ is a $U$-module.
\end{lem}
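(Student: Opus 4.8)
The plan is to use the exact sequence~(\ref{eq:3contra}) to identify $\mathfrak{R} = \End_R(K)$ with $\Ext^1_R(K,R)$ via $\delta$, and then to track where the map $\nu\colon R \to \mathfrak{R}$ goes under this identification. Recall that $\delta$ is the connecting homomorphism coming from applying $\Hom_R(K,-)$ to $0 \to R \overset{u}\to U \to K \to 0$. First I would observe that the composite $\delta \circ \nu\colon R \to \Ext^1_R(K,R)$ is, up to sign, the map sending $r \in R$ to $r$ times the class $\xi \in \Ext^1_R(K,R)$ of the defining extension $0 \to R \to U \to K \to 0$; equivalently it is $R \cong \Hom_R(R,R) \to \Ext^1_R(K,R)$, the connecting map for $\Hom_R(-,R)$ applied to the same sequence. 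That connecting map fits into the long exact sequence~(\ref{eq:2contra}) with $M = R$, namely
\[
0 \to R/h_u(R) \to \Ext^1_R(K,R) \to \Ext^1_R(U,R) \to 0,
\]
so that $\Coker(\delta\nu) \cong \Ext^1_R(U,R)$, which is a $U$-module (being a quotient of $\Hom_R(U,-)$-type data, or more directly: $\Ext^1_R(U,R)$ carries a natural $U$-action since $U \otimes_R U \cong U$). Since $\delta$ is an isomorphism, $\Coker(\nu) \cong \Coker(\delta\nu) \cong \Ext^1_R(U,R)$ is a $U$-module, as desired.

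The key steps, in order, are: (1) identify $\mathfrak{R}$ with $\Ext^1_R(K,R)$ using~(\ref{eq:3contra}) and the vanishing $\Ext^1_R(K,U) = 0$ already recorded there; (2) check that this isomorphism $\delta$ carries $\nu$ to the natural map $R \cong \Hom_R(R,R) \to \Ext^1_R(K,R)$ induced by $\Hom_R(-,R)$ applied to $0 \to R \to U \to K \to 0$ — this is a routine diagram chase / naturality of connecting homomorphisms, using that the $R$-module structure on $\mathfrak{R}$ and on $\Ext^1_R(K,R)$ agree; (3) invoke~(\ref{eq:2contra}) with $M=R$ to compute the cokernel of that natural map as $\Ext^1_R(U,R)$; (4) note $\Ext^1_R(U,R)$ is a $U$-module because $u$ is a ring epimorphism. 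Alternatively, and perhaps more cleanly, one can argue directly via Lemma~\ref{L:oneten}: the map $\lambda\colon R \to \Lambda(R)$ has $u$-h-divisible kernel and $U$-module cokernel by Lemma~\ref{L:mapfacts}(ii),(iii) together with the fact that $\Lambda(R)/\lambda(R)$ is $\G$-torsion hence killed by... — no, that last claim is false, so I would not pursue that route and instead stick with the $\Ext$-computation above, which is the safe path.

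The main obstacle is step (2): pinning down precisely that $\delta \circ \nu$ equals the $\Hom_R(-,R)$-connecting map, i.e. that multiplication by $r$ on $K$ corresponds under $\delta$ to the pullback of the extension class $\xi$ along multiplication by $r$ on $R$. This is ultimately a statement about the compatibility of the two long exact sequences obtained from $0 \to R \to U \to K \to 0$ by applying $\Hom_R(K,-)$ versus $\Hom_R(-,R)$, linked through the Yoneda/composition pairing $\Hom_R(K,K) \times \Ext^1_R(K,R) \to \Ext^1_R(K,R)$ and the identification $\mathrm{id}_K \mapsto \xi$. I expect this to be a short but slightly fiddly naturality argument; everything else (the vanishing of $\Ext^1_R(K,U)$ and $\Ext^1_R(U,U)$, the $U$-module structure on $\Ext^1_R(U,R)$) is either already in the excerpt or immediate from $u$ being a flat ring epimorphism.
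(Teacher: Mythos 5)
Your proposal is correct and follows essentially the same route as the paper: the paper also identifies $\mathfrak{R}$ with $\Ext^1_R(K,R)$ via $\delta$ from (\ref{eq:3contra}), places $\nu$ in a commuting diagram over the sequence (\ref{eq:2contra}) with $M=R$, and concludes $\Coker(\nu)\cong\Ext^1_R(U,R)$ (by the five lemma), which is a $U$-module. The compatibility $\delta\circ\nu = $ (connecting map for $\Hom_R(-,R)$), which you rightly flag as the only point needing care, is exactly the commutativity of the left square that the paper asserts without further comment.
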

\begin{proof} Recall that $h_u(R)$ is the $u$-h-divisible submodule of $R$ and $\delta$ is as in sequence (\ref{eq:3contra}). Consider the following commuting diagram.
\[ \xymatrix{
0 \ar[r] &R/h_u(R) \ar[r]^\nu \ar@{=}[d] 	& \mathfrak{R} \ar[r] \ar[d]^\cong_\delta & \Coker(\nu) \ar[r] \ar[d] &0\\
0 \ar[r] &R/h_u(R) \ar[r] 		& \Ext^1_R(K,R) \ar[r]  		& \Ext^1_R(U,R) \ar[r]  &0}
\]
By the five-lemma, the last vertical arrow is an isomorphism, so $\Coker(\nu) \cong \Ext^1_R(U,R)$ which is a $U$-module, as required.

\end{proof}
\subsection{The isomorphism between the $\G$-completion of $R$ and $\End(K)$}
We now prove the main result of this section.
\begin{prop}\label{P:ringiso}
Let $u:R \to U$ be a flat injective ring epimorphism. Using the notation of Subsection  6.1 the homomorphism  $\alpha: \Lambda(R) \to \mathfrak{R}$ is a ring isomorphism.
\end{prop}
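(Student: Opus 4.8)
The plan is to establish that $\alpha\colon \Lambda(R)\to\mathfrak R$ is a bijective ring homomorphism by exploiting the universal property of $u$-contramodules. That it is a ring homomorphism is clear from its construction via the action of $\Lambda(R)$ on $K$, and injectivity is already Lemma~\ref{L:mapfacts}(iv), so the whole content is surjectivity. The key observation is that $\Lambda(R)$, being a projective limit of $R/J$'s, is a $u$-contramodule (Corollary~\ref{C:compl-contra}), and $\mathfrak R=\End_R(K)$ is likewise a $u$-contramodule (Lemma~\ref{L:endcontra}), so both rings sit inside the abelian category of $u$-contramodules; moreover the two structure maps $\lambda\colon R\to\Lambda(R)$ and $\nu\colon R\to\mathfrak R$ have, by Lemma~\ref{L:mapfacts}(iii) and Lemma~\ref{L:coker-nu}, kernel equal to a $u$-h-divisible $R$-module and cokernel equal to a $U$-module. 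This is exactly the hypothesis of Lemma~\ref{L:oneten}.

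First I would apply Lemma~\ref{L:oneten} with $b=\nu\colon R\to\mathfrak R$ and $c=\lambda\colon R\to\Lambda(R)$: since $\Lambda(R)$ is a $u$-contramodule, $\Ker(\nu)=h_u(R)$ is $u$-h-divisible (Lemma~\ref{L:mapfacts}(iii) together with the proof of (i)), and $\Coker(\nu)$ is a $U$-module (Lemma~\ref{L:coker-nu}), there is a unique $R$-module homomorphism $\beta\colon\mathfrak R\to\Lambda(R)$ with $\lambda=\beta\nu$. Then I would argue that $\beta$ is a two-sided inverse of $\alpha$. For $\alpha\beta=\id_{\mathfrak R}$: the composite $\alpha\beta\colon\mathfrak R\to\mathfrak R$ satisfies $\alpha\beta\nu=\alpha\lambda=\nu$ (the last equality is the commuting triangle $(\ast)$), so both $\alpha\beta$ and $\id_{\mathfrak R}$ are maps $\mathfrak R\to\mathfrak R$ agreeing after precomposition with $\nu$; by the uniqueness clause in Lemma~\ref{L:oneten} (applied now with $b=\nu$, $c=\nu$, $C=\mathfrak R$ a $u$-contramodule) we get $\alpha\beta=\id_{\mathfrak R}$. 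For $\beta\alpha=\id_{\Lambda(R)}$: since $\alpha$ is injective by Lemma~\ref{L:mapfacts}(iv), from $\alpha\beta\alpha=\alpha$ we cancel $\alpha$ on the left to conclude $\beta\alpha=\id_{\Lambda(R)}$. Hence $\alpha$ is bijective.

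It remains to note that $\alpha$ is a ring homomorphism — which follows directly from the fact that the $\Lambda(R)$-module structure on $K$ is compatible with composition in $\End_R(K)$, i.e.\ $\alpha$ sends the product $(r_J+J)_{J}\cdot(s_J+J)_J=(r_Js_J+J)_J$ to the composite of the corresponding endomorphisms, and $\alpha(\lambda(1_R))=\nu(1_R)=\id_K$. A bijective ring homomorphism is a ring isomorphism, completing the proof.

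The step I expect to be the main obstacle is the correct bookkeeping in applying Lemma~\ref{L:oneten}: one must verify carefully that $\Ker(\nu)$ is genuinely $u$-h-divisible (not merely $\G$-divisible) and that $\Coker(\nu)$ is genuinely a $U$-module, and then use the \emph{uniqueness} half of that lemma — not just existence — to pin down $\alpha\beta=\id$. Everything else is formal cancellation once injectivity of $\alpha$ is in hand. One should also double-check that the map $\beta$ produced is automatically compatible with the ring structures, but since $\alpha$ is already an iso as abelian groups and a ring homomorphism, its set-theoretic inverse is automatically a ring homomorphism, so this causes no trouble.
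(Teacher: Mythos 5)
Your proposal is correct and follows essentially the same route as the paper: injectivity from Lemma~\ref{L:mapfacts}(iv), then surjectivity by applying Lemma~\ref{L:oneten} to $\nu$ (whose kernel $h_u(R)$ is $u$-h-divisible and whose cokernel is a $U$-module) together with the fact that both $\Lambda(R)$ and $\mathfrak R$ are $u$-contramodules, and the uniqueness clause to get $\alpha\beta=\id_{\mathfrak R}$. The only place you are slightly lighter than the paper is the verification that $\alpha$ is multiplicative, where one must actually check that $\alpha(\tilde r\cdot\tilde s)$ and $\alpha(\tilde r)\alpha(\tilde s)$ agree on $z+R$ despite being computed with the different annihilator ideals $\Ann_R(z+R)$ and $\Ann_R(sz+R)$ — a short coherence computation the paper writes out.
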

\begin{proof}
From $(\ast)$ we have the following commuting triangle:
\[ \xymatrix{
R \ar[d]_{\nu}  \ar[r]^\lambda& \Lambda(R) \ar[dl]^{\alpha} \\
\mathfrak{R} &  }
\] 
From sequences (ii) and (iii) we have the following exact sequence.
\[
0 \to h_u(R) \to R \overset{\nu} \to \mathfrak{R} \to \Coker(\nu) \to 0
\]
where $h_u(R)$ is $u$-h-divisible and  $\Coker(\nu)$ is a $U$-module by Lemma~\ref{L:coker-nu}. Both $\Lambda(R)$ and $\mathfrak{R}$ are $u$-contramodules so one can apply Lemma~\ref{L:oneten} to the two triangles below. That is, firstly, there exists a unique map $\beta$ such that $\beta \nu = \lambda$, and secondly by uniqueness, the identity on $\mathfrak{R}$ is the only homomorphism that makes the triangle on the right below commute.
\[ \xymatrix{
 R \ar[d]_{\lambda}  \ar[r]^\nu& \mathfrak{R}  \ar[dl]^{\beta} & R \ar[d]_{\nu}  \ar[r]^\nu& \mathfrak{R}  \ar[dl]^{\text{id}_\mathfrak{R}} \\
 \Lambda(R) &&  \mathfrak{R} & }
\]
It follows that since $ \alpha \beta \nu = \alpha \lambda= \nu $, by uniqueness $\alpha \beta = \text{id}_\mathfrak{R}$. Therefore, $\alpha$ is surjective. It was shown in Lemma~\ref{L:mapfacts} that $\alpha$ is injective, hence $\alpha$ is an isomorphism.

It remains to see that $\alpha$ is a ring homomorphism. First note that if $z\in U$, $s\in R$ and $Jz \subseteq R $, then also $J(sz) \subseteq R$, that is $J \subseteq \Ann_R(sz +R)$. Let $\tilde{r}= (r_J+J)_{J \in \G}$ and $\tilde{s}= (s_J+J)_{J \in \G}$ denote elements of $\Lambda(R)$. Let $L$ denote $\Ann_R(z +R)$ and $L_s$ denote $\Ann_R(sz +R)$ for a fixed $z+R$ and note that $L \subseteq L_s$.
\[
\alpha(\tilde{r} \cdot \tilde{s} ):K \to K: z+R \mapsto r_L s_L z +R
\]
\[
\alpha(\tilde{r})   \alpha(\tilde{s}) = (K \overset{\tilde{r}}\to K)  (K \overset{\tilde{s}}\to K) : z +R \mapsto s_L z +R \mapsto r_{L_s}s_Lz+R
\]
Then clearly $r_{L_s} - r_L \in L_s$, so the endomorphisms $\alpha(\tilde{r} \cdot \tilde{s} )$ and   $\alpha(\tilde{r})   \alpha(\tilde{s})$ are equal.
\end{proof}

The following lemma will be useful when passing from the ring $R$ to the complete and separated topological ring $\mathfrak{R}$.
\begin{lem} \label{L:discreteiso}
Let $u:R \to U$ be a flat injective ring epimorphism with associated Gabriel topology $\G$. The $R$-module $R/J$ is isomorphic to $\mathfrak{R}/ J\mathfrak{R}$ and to  $\Lambda(R)/J\Lambda(R)$, for every $J\in \G$.
\end{lem}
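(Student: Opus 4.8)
The plan is to prove the isomorphisms $R/J \cong \mathfrak R/J\mathfrak R \cong \Lambda(R)/J\Lambda(R)$ by exploiting the ring isomorphism $\alpha\colon \Lambda(R)\to \mathfrak R$ of Proposition~\ref{P:ringiso}, which reduces everything to a single statement about $\Lambda(R)$. Since $\alpha$ is a ring isomorphism carrying $J\Lambda(R)$ onto $J\mathfrak R$, it induces an isomorphism $\Lambda(R)/J\Lambda(R)\cong \mathfrak R/J\mathfrak R$ for free; so the real content is showing $R/J\cong \Lambda(R)/J\Lambda(R)$ for every $J\in\G$. Equivalently, writing $\lambda\colon R\to\Lambda(R)$ for the canonical map, I want to show that $\lambda$ induces an isomorphism $R/J \xrightarrow{\ \sim\ } \Lambda(R)/J\Lambda(R)$.

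First I would consider the projection $\pi_J\colon\Lambda(R)\to R/J$ with kernel $W(J)$; by construction $\pi_J\circ\lambda$ is the quotient map $R\to R/J$, and $\pi_J$ is surjective. So it suffices to prove $W(J)=J\Lambda(R)$. The inclusion $J\Lambda(R)\subseteq W(J)$ is Remark~\ref{R:topologies}. For the reverse inclusion, the key point is that the filtered inverse system $\{R/J'\mid J'\in\G,\ J'\subseteq J\}$ is cofinal, and for $J'\subseteq J$ the transition map $R/J'\to R/J$ is surjective, so the Mittag-Leffler condition holds and $\pi_J$ is surjective with a concrete description of $W(J)$: an element $(r_{J'}+J')_{J'}\in\Lambda(R)$ lies in $W(J)$ exactly when $r_J\in J$. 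Now I would lift: pick $\eta=(r_{J'}+J')_{J'}\in W(J)$, so $r_J\in J$; the idea is to write $\eta$ as a finite $\Lambda(R)$-linear combination of elements of $J$. Using that $J$ is finitely generated (as $\G$ is finitely generated by \cite[Proposition XIII.1.2]{Ste75} / \cite[Proposition 3.4]{Ste75}), say $J=(a_1,\dots,a_t)$, and the compatibility relations $r_{J'}-r_{J''}\in J''$ for $J''\subseteq J'$, one builds, working over the cofinal system below $J$, coefficients $(s^{(i)}_{J'}+J')_{J'}\in\Lambda(R)$ so that $\eta=\sum_i a_i\cdot (s^{(i)}_{J'})_{J'}$; this is possible because at each level $r_{J'}\in J + J'$ (since $r_{J'}\equiv r_J \bmod J'$... more precisely one needs $r_{J'}\in J$, which follows as $J\supseteq J'$ and $r_{J'} \equiv r_J$ only modulo $J'$ — so one instead works modulo the system and uses that $JR/J' = (J+J')/J'$, together with a careful inverse-limit argument to patch the coefficients coherently). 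I expect this coherent patching of the coefficients across the inverse system to be the main obstacle: one must choose the $s^{(i)}_{J'}$ so they satisfy the compatibility relations defining an element of $\Lambda(R)$, which typically requires either the surjectivity of the transition maps (Mittag-Leffler, so that $\varprojlim^1$ vanishes) applied to the short exact sequences of inverse systems $0\to \ker\to (J/J')_{J'}\to (\,\text{something}\,)\to 0$, or an explicit recursive construction.

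An alternative, cleaner route I would try first is to argue homologically via $u$-contramodules. By Lemma~\ref{L:rjcontra}, $R/J$ is a $u$-contramodule; by Corollary~\ref{C:compl-contra}, $\Lambda(R)$ is a $u$-contramodule, hence so is $\Lambda(R)/J\Lambda(R)$ (it is an $R/J$-module, apply Lemma~\ref{L:rjcontra} again). The canonical map $R/J\to\Lambda(R)/J\Lambda(R)$ is then a morphism of $u$-contramodules; to invert it, apply Lemma~\ref{L:oneten} with the exact sequence $0\to h_u(R)\to R\to R/J$ (note $\bigcap_{J''\in\G}J''\subseteq J$ is $u$-h-divisible by Lemma~\ref{L:mapfacts}(iii), and is contained in $J$, and the cokernel $R/J$ maps compatibly), producing a map backwards, and check both composites are the identity by the uniqueness clause of Lemma~\ref{L:oneten}. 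This sidesteps the inverse-limit bookkeeping entirely; the only thing to verify carefully is that $\ker(R\to R/J)=J$ is... not $u$-h-divisible in general, so one instead applies Lemma~\ref{L:oneten} to the map $\lambda\colon R\to\Lambda(R)$ composed with the projection and uses that $\ker\lambda=\bigcap_{J''}J''$ is $u$-h-divisible while $\operatorname{Coker}\lambda$ is... here one needs $\operatorname{Coker}(R\to\Lambda(R)/J\Lambda(R))$ to be a $U$-module, which should follow since modulo $J$ everything is $J$-torsion and the relevant cokernel is a quotient of $\operatorname{Coker}(\lambda)$. Whichever route, the crux is identifying $W(J)$ with $J\Lambda(R)$, and the finite generation of $J$ together with the $u$-contramodule machinery is what makes it go through.
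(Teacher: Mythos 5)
Your second route is, in outline, the paper's own proof (reduce to $\mathfrak{R}$ via $\alpha$ from Proposition~\ref{P:ringiso}, use that $R/J$ and $\mathfrak{R}/J\mathfrak{R}$ are $u$-contramodules by Lemma~\ref{L:rjcontra}, and invert the canonical map via Lemma~\ref{L:oneten} plus its uniqueness clause), but as written it has a genuine gap: you propose applying Lemma~\ref{L:oneten} to the map $R\to\Lambda(R)/J\Lambda(R)$ obtained by composing $\lambda$ with the projection. That map does not satisfy the hypotheses: its kernel is $\lambda\inv(J\Lambda(R))\supseteq J$, which is not $u$-h-divisible, and its cokernel is annihilated by $J\in\G$, so it cannot be a nonzero $U$-module --- indeed it is zero exactly when the statement you are proving holds, so assuming it is a $U$-module is circular; nor does "quotient of $\Coker\lambda$" help, since a quotient of a $U$-module by an arbitrary $R$-submodule need not be a $U$-module. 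The correct application (the paper's) is to the uncomposed map $\nu\colon R\to\mathfrak{R}$ (equivalently $\lambda\colon R\to\Lambda(R)$) with $c=p\colon R\to R/J$: here $\Ker\nu=h_u(R)$ is $u$-h-divisible by Lemma~\ref{L:mapfacts}, and $\Coker\nu\cong\Ext^1_R(U,R)$ is a $U$-module by Lemma~\ref{L:coker-nu} --- an input your proposal never invokes and which is the nontrivial verification. The unique $f\colon\mathfrak{R}\to R/J$ with $f\nu=p$ then kills $J\mathfrak{R}$ automatically, simply because $f$ is $R$-linear and its target is annihilated by $J$; this induces $\bar f\colon\mathfrak{R}/J\mathfrak{R}\to R/J$, and the two composites are identities by surjectivity of $p$ and by the uniqueness clause of Lemma~\ref{L:oneten} applied to $c=\pi\nu$ (the target $\mathfrak{R}/J\mathfrak{R}$ being a $u$-contramodule as an $R/J$-module).

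Your first route (proving $W(J)=J\Lambda(R)$ directly) targets the right statement but is not a proof: the key inclusion $W(J)\subseteq J\Lambda(R)$ is exactly the step you flag as "the main obstacle," and no argument for the coherent choice of coefficients over the inverse system is given; a Mittag-Leffler/$\varprojlim^1$ argument is not available as stated since $\G$ is a directed system that need not be countable. So neither route is complete as written, though the second becomes the paper's proof once Lemma~\ref{L:oneten} is applied to $\nu$ itself and Lemma~\ref{L:coker-nu} is supplied.
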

\begin{proof}
$\mathfrak{R}/ J\mathfrak{R}$ and $\Lambda(R)/J\Lambda(R)$ are isomorphic by Proposition~\ref{P:ringiso}. Both $R/J$ and $\mathfrak{R} / J \mathfrak{R}$ are $R/J$-modules, hence are $u$-contramodules by Lemma~\ref{L:rjcontra} 
and we can imply Lemma~\ref{L:oneten} to $\nu\colon R \to \mathfrak{R}$ to find that there exists a unique $f$ such that the left triangle below commutes. The map $f$ induces $\bar{f}$ since $J\mathfrak{R}\subseteq \Ker f$, so the right triangle below also commutes.
\[ \xymatrix{
R \ar[d]_{p}  \ar[r]^\nu& \mathfrak{R} \ar[dl]^{f} & \mathfrak{R} \ar[d]_{f}  \ar[r]^{\pi \hspace{15pt}}& \mathfrak{R} / J\mathfrak{R}  \ar[dl]^{\bar{f} }\\
R/J & &R/J &  }
\]
Let $\bar{\nu}$ be the map induced by $\nu$ as in the following commuting diagram. We will show that $\bar{f}$ and $\bar{\nu}$ are mutually inverse.

\[ \xymatrix{
R \ar[r]^\nu \ar[d]_{p} & \mathfrak{R} \ar[d]_{\pi}\\
R/J \ar[r]^{\bar{\nu}} & \mathfrak{R} / J\mathfrak{R}}
\]
We have that $\pi \nu = \bar{\nu} p$, and so using the above commuting triangles it follows that
$\bar{f} \bar{\nu} p = \bar{f} \pi \nu =f \nu =p $. As $p$ is surjective, $\bar{f} \bar{\nu} = \text{id}_{R/J}$. We now show that $\bar{\nu} \bar{f} = \text{id}_{{\mathfrak{R}}/J\mathfrak{R}}$.

\[ \xymatrix{
R \ar[d]_{\pi \nu}  \ar[r]^\nu&\mathfrak{R} \ar[dl]^{h} \\
\mathfrak{R} / J\mathfrak{R} & }
\]
By uniqueness, $\pi$ is the unique map that fits into the triangle above, that is $\pi \nu = h \nu$ implies that $h = \pi$. So, 
\[
\pi \nu = \bar{\nu} p = \bar{\nu} f \nu = \bar{\nu} \bar{f} \pi \nu
\]  
Therefore $\pi = \bar{\nu} \bar{f} \pi$, and as $\pi$ is surjective, $\bar{\nu} \bar{f} = \text{id}_{{\mathfrak{R}}/J\mathfrak{R}}$ as required. 
\end{proof}
\begin{prop}\label{P:topologies-2}  If $V$ is an open ideal in the topology of $\mathfrak{R}=\End_R(K)$, then there is $J\in \G$ and a surjective ring homomorphism $R/J\to \mathfrak{R}/V$.
\end{prop}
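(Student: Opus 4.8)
The plan is to reduce the statement to Lemma~\ref{L:discreteiso}, which already supplies ring isomorphisms $R/J\cong\mathfrak R/J\mathfrak R$ for every $J\in\G$. So it suffices to exhibit a $J\in\G$ with $J\mathfrak R\subseteq V$: then the composite $R/J\overset{\cong}{\longrightarrow}\mathfrak R/J\mathfrak R\twoheadrightarrow\mathfrak R/V$, the first map being the isomorphism of Lemma~\ref{L:discreteiso} and the second the canonical projection, is the required surjective ring homomorphism. Throughout I would use that $\mathfrak R$ is commutative, so that $J\mathfrak R$ and the annihilators $\Ann_{\mathfrak R}(F)$ appearing below are genuine two-sided ideals.

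First I would unwind openness of $V$ in the finite topology of $\mathfrak R=\End_R(K)$. Being an open ideal, $V$ contains a basic neighbourhood of zero, that is, the ideal $\Ann_{\mathfrak R}(F)$ of endomorphisms vanishing on some finitely generated submodule $F=\sum_{i=1}^{n}R(z_i+R)$ of $K$, with $z_1,\dots,z_n\in U$. The natural candidate is then $J:=\bigcap_{i=1}^{n}\Ann_R(z_i+R)$. Since $K$ is $\G$-torsion, each $\Ann_R(z_i+R)$ lies in $\G$, and since $\G$ is a filter the finite intersection $J$ lies in $\G$.

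The heart of the argument is the inclusion $J\mathfrak R\subseteq\Ann_{\mathfrak R}(F)$, and the point to be careful about is that an element of $J$ need not annihilate all of $K$, only the chosen generators $z_i+R$; this is repaired by $R$-linearity. Concretely, for $a\in J$ and $\varphi\in\mathfrak R$ one has, for each $i$,
\[
(\nu(a)\varphi)(z_i+R)=a\cdot\varphi(z_i+R)=\varphi\bigl(a\cdot(z_i+R)\bigr)=\varphi(0)=0,
\]
so $\nu(a)\varphi$ vanishes on $F$; as every element of $J\mathfrak R$ is a finite sum of such endomorphisms, $J\mathfrak R\subseteq\Ann_{\mathfrak R}(F)\subseteq V$. (Alternatively, one could transport the computation to $\Lambda(R)$ via the ring isomorphism $\alpha$ of Proposition~\ref{P:ringiso}: $\Ann_{\mathfrak R}(F)$ corresponds to $W(J)=\Ker(\Lambda(R)\to R/J)$, which contains $J\Lambda(R)$ by Remark~\ref{R:topologies}.) Combined with the reduction of the first paragraph, this finishes the proof. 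I expect the only genuine subtlety to be precisely this inclusion — equivalently, recognizing that the finite topology on $\mathfrak R$ is generated by the ideals $J\mathfrak R$, $J\in\G$ — everything else being formal once Lemma~\ref{L:discreteiso} is in hand.
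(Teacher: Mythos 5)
Your proof is correct and follows essentially the same strategy as the paper: find $J\in\G$ with $J\mathfrak R\subseteq V$ and then compose the isomorphism of Lemma~\ref{L:discreteiso} with the canonical projection. The only difference is in execution of the key inclusion: the paper pulls $V$ back along $\alpha$ to an open ideal of $\Lambda(R)$ and invokes Remark~\ref{R:topologies}, implicitly using that $\alpha$ identifies the two topologies, whereas you verify $J\mathfrak R\subseteq\Ann_{\mathfrak R}(F)\subseteq V$ directly in $\End_R(K)$ via $R$-linearity — which makes explicit the step the paper leaves tacit.
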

\begin{proof} By the definition of the topology on $\mathfrak{R}$, if $V$ is an open ideal, then by Proposition~\ref{P:ringiso}, $W=\alpha^{-1}(V)$ is an open ideal in the projective limit topology of $\Lambda(R)$. Hence by Remark~\ref{R:topologies}, there is $J\in \G$ such that $W\supseteq \Lambda(R)J$. By Lemma~\ref{L:discreteiso} there is a surjective ring homomorphism $R/J\to \mathfrak{R}/V.$
\end{proof} 
 \section{When a $\G$-divisible class is enveloping}\label{S:enveloping}

For this section, $R$ will always be a commutative ring. Fix a flat injective ring epimorphism $u$ and an exact sequence
\[
0 \to R \overset{u}\to U \to K \to 0.
\]
Denote  by $\G$ the Gabriel topology arising from the flat ring epimorphisms $u$. We let $\mSpec{R}$ denote the collection of maximal ideals of $R$.

The aim of this section is to show that if $\D_\G$ is enveloping then for each $J \in \G$ the ring $R/J$ is perfect. It will follow from Section~\ref{S:properfect} that also $\mathfrak{R}$ is pro-perfect. 

We begin by showing that for a local ring $R$ the rings $R/J$ are perfect, before extending the result to all commutative rings by showing that all $\G$-torsion modules (specifically the $R/J$ for $J \in \G$) are isomorphic to the direct sum of their localisations.

In Lemma~\ref{L:R-env}, it was shown that if $\varepsilon:R \to D$ is a $\D_\G$-envelope of $R$ in $\ModR$, then $D$ must be $\G$-torsion-free. Furthermore, if $\G$ arises from a perfect localisation $u:R \to U$ and $R$ has a $\D_\G$-envelope, then the following proposition allows us to work in the setting that $\D_\G = \Gen U$, thus $(\A, \D_\G)$ is the $1$-tilting cotorsion pair associated to the $1$-tilting module $U \oplus K$ (see Remark~\ref{R:pdU=1}).
 
 Combined with \cite[Proposition 5.4]{H}, the following proposition provides a generalisation of \cite[Theorem 1.1]{AHHT1}.
More precisely, the propositions show that conditions (1),(4), and (6) in \cite[Theorem 1.1]{AHHT1} hold also in our more general context. The equivalence of (1),(2), and (3) of \cite[Theorem 1.1]{AHHT1} was already shown in more generality in \cite{AS}.
 
 \begin{prop} \label{P:pd1}
Let $u:R \to U$ be a (non-trivial) flat injective ring epimorphism and suppose $R$ has a $\D_\G$-envelope. Then $\pdim_R U \leq 1$.
\end{prop}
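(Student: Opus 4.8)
The plan is to prove that $U\cong R_\G$ is a direct summand of the $R$-module $T_0$ occurring in the $\D_\G$-envelope of $R$; since $T_0$ lies in $\Add(T)$ for a $1$-tilting module $T$ with $T^\perp=\D_\G$, and a direct summand of a module of projective dimension at most one again has projective dimension at most one, this yields $\pdim_RU=\pdim_RR_\G\le\pdim_RT_0\le 1$ at once. In particular we will never need to know beforehand that $\D_\G=\Gen(U)$ or that $U\oplus K$ is tilting — those facts are recovered afterwards from $\pdim_RU\le 1$ via Remark~\ref{R:pdU=1}.

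First I would set up the envelope exactly as in Section~\ref{S:tilting-enveloping}: since $R$ has a $\D_\G$-envelope we may take it to be a short exact sequence $0\to R\overset{\varepsilon}\to T_0\to T_1\to 0$ with $T_0,T_1\in\Add(T)$, and by Lemma~\ref{L:R-env} the module $T_0$ is $\G$-torsion-free and the natural map $T_0\to T_0\otimes_RR_\G$ is an isomorphism, so $T_0$ carries an $R_\G$-module structure whose restriction along $\psi_R$ is its original $R$-module structure. I would also record that $R_\G\in\D_\G$: by hypothesis $\G$ is the Gabriel topology arising from the flat injective ring epimorphism $u$, so under the identification $U\cong R_\G$ one has $JR_\G=R_\G$ for every $J\in\G$, i.e.\ $R_\G$ is $\G$-divisible.

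The heart of the argument is then short. Because $\varepsilon$ is in particular a $\D_\G$-preenvelope and $R_\G\in\D_\G$, the canonical map $\psi_R\colon R\to R_\G$ factors through $\varepsilon$, say $\psi_R=g\varepsilon$ with $g\colon T_0\to R_\G$. Conversely, writing $w:=\varepsilon(1_R)\in T_0$ and using the $R_\G$-action on $T_0$, the assignment $\iota\colon R_\G\to T_0$, $x\mapsto xw$, is $R_\G$-linear and satisfies $\iota\psi_R=\varepsilon$. Hence $g\iota\colon R_\G\to R_\G$ agrees with $\id_{R_\G}$ on $\psi_R(R)$, so $g\iota-\id_{R_\G}$ vanishes on $R$ and therefore factors through $R_\G/R\cong K$; since $K$ is $\G$-torsion and $R_\G$ is $\G$-torsion-free we have $\Hom_R(K,R_\G)\cong\Hom_R(K,U)=0$, whence $g\iota=\id_{R_\G}$. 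Thus $\iota$ is a split monomorphism, $R_\G$ is a direct summand of $T_0$, and the proof is complete.

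The only genuinely nontrivial point is the observation, supplied by Lemma~\ref{L:R-env}, that the envelope target $T_0$ is automatically a module over $R_\G$; this is precisely what makes the section $\iota$ available. Once it is in place, the combination of the preenvelope factorization $g$ with $\iota$ and the vanishing $\Hom_R(K,R_\G)=0$ forces the splitting with no further computation, so I do not expect any serious obstacle beyond getting the compatibility of the $R$- and $R_\G$-module structures on $T_0$ stated correctly.
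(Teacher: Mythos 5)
Your argument is correct, and it follows the same overall strategy as the paper's proof: both identify $U\cong R_\G$ as a direct summand of the envelope target $T_0=D$ and then conclude from the fact that this target has projective dimension at most one. The difference lies in the two supporting steps. For the bound $\pdim_R T_0\le 1$ you invoke $T_0\in\Add(T)$ for the $1$-tilting module $T$ with $T^\perp=\D_\G$, whereas the paper gets it from Wakamatsu's Lemma ($D/R\in\A$, hence $\pdim_R D/R\le 1$ and so $\pdim_R D\le 1$); these are interchangeable. For the splitting itself the paper tensors the envelope sequence with $U$ and shows the cokernel $D/R\otimes_R U$ is $U$-projective via the isomorphism $\Ext^1_R(D/R,Z)\cong\Ext^1_U(D/R\otimes_RU,Z)$, while you construct an explicit retraction: a preenvelope factorization $g$ with $g\varepsilon=\psi_R$, the section $\iota\colon x\mapsto xw$ made available by the $R_\G$-module structure on $T_0$ from Lemma~\ref{L:R-env}, and the vanishing $\Hom_R(K,R_\G)=0$ to upgrade ``$g\iota=\id$ on $\psi_R(R)$'' to $g\iota=\id_{R_\G}$. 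Your version is slightly more elementary (it avoids the change-of-rings isomorphism for $\Ext^1$ and the observation that every $U$-module is $\G$-divisible), at the cost of having to check carefully that the $R_\G$-action on $T_0$ restricts to the original $R$-action, which you do. Both routes are sound.
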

\begin{proof}
Let 
\[
0 \to R \overset{\varepsilon}\to D \to D/R \to 0  \eqno(** )
\]
denote the $\D_\G$-envelope of $R$. First we claim that $D$ is a $U$-module by showing that $D$ is $\G$-closed, or that $D \cong U \otimes_R D$. Consider the following exact sequence.
\[
0 \to \Tor^R_1( D, K) \to D \to D \otimes_R U \to D \otimes_R K \to 0
\]
Therefore we must show that $\Tor^R_1(D, K) = 0 = D\otimes_R K$. As $D$ is $\G$-divisible and $K$ is $\G$-torsion it follows that $D \otimes_R K =0$. By Lemma~\ref{L:R-env} $D$ is $\G$-torsion-free, hence $D\cong D\otimes_R U$ and $D$ is a $U$-module.
The cotorsion pair $(\A, \D_\G)$ is complete, which implies that the $R$-module $D/R$ is in $\A$, so $\pdim_R D/R \leq 1$. From the short exact sequence {\rm $(**)$} it follows that also $\pdim_R D \leq 1$. Consider the following short exact sequence of $U$-modules 
\[
0 \to U \to D \otimes_R U \cong D \to D/R \otimes_R U \to 0
\]
We now claim that $D/R \otimes_R U$ is $U$-projective. Take any $Z \in \UMod$ and note that $Z \in \D_\G$. Then $0=\Ext^1_R(D/R, Z)\cong\Ext^1_U(D/R\otimes_R U, Z)$. Therefore the short exact sequence above splits in $\ModU$ and so $U$ is a direct summand of $D$ also as an $R$-module, and the conclusion follows. 
\end{proof}

\begin{cor}\label{C:U-envelope}
Let $u:R \to U$ be a (non-trivial) flat injective ring epimorphism and suppose $R$ has a $\D_\G$-envelope. Then 
\[
0 \to R \overset{u} \to U \to K \to 0
\]
is a $\D_\G$-envelope of $R$.
\end{cor}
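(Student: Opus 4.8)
The plan is to combine Proposition~\ref{P:pd1} with the results on tilting envelopes from Section~\ref{S:tilting-enveloping}. By Proposition~\ref{P:pd1}, the hypothesis that $R$ has a $\D_\G$-envelope forces $\pdim_R U \leq 1$. As explained in Remark~\ref{R:pdU=1}, once $\pdim_R U \leq 1$ the module $U \oplus K$ is a $1$-tilting module and the associated $1$-tilting class is exactly $\D_\G = \Gen U$; hence $(\A, \D_\G)$ is the $1$-tilting cotorsion pair of $U \oplus K$. In particular (T3) for this tilting module supplies a special $\D_\G$-preenvelope of $R$, and the short exact sequence
\[
0 \to R \overset{u}\to U \to K \to 0
\]
is a candidate for such a preenvelope, since $U \in \Gen U = \D_\G$, the map $u$ is a monomorphism, and $K \in \A$ because $\D_\G = K^\perp$ (so ${}^{\perp_1}\D_\G \ni K$). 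Thus the displayed sequence is indeed a special $\D_\G$-preenvelope of $R$.

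Next I would invoke Proposition~\ref{P:Xu-env}: since $R$ is assumed to admit a $\D_\G$-envelope and $u$ is a $\D_\G$-preenvelope, we may split $U = U' \oplus H$ so that the composite $R \to U \to U'$ is the $\D_\G$-envelope of $R$. The remaining task is to show $H = 0$, i.e. that the preenvelope $u$ is already minimal. Here the key input is the proof of Proposition~\ref{P:pd1} itself: there it was shown that in the $\D_\G$-envelope $0 \to R \overset{\varepsilon}\to D \to D/R \to 0$, the module $U$ is a direct summand of $D$ as an $R$-module (the sequence $0 \to U \to D \to D/R \otimes_R U \to 0$ splits, with $D/R \otimes_R U$ being $U$-projective). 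Since both $u\colon R\to U$ and $\varepsilon\colon R\to D$ are $\D_\G$-preenvelopes and $\varepsilon$ is an envelope, $D$ is (isomorphic to) a direct summand of $U$; combined with $U$ being a direct summand of $D$ and a cardinality/Krull–Schmidt-free argument via the envelope's minimality, one concludes $D \cong U$ and $\varepsilon$ is equivalent to $u$.

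More cleanly: $\varepsilon$ being an envelope means that any endomorphism $f$ of $D$ with $f\varepsilon = \varepsilon$ is an automorphism. Factor $u$ through the envelope as $u = g\varepsilon$ for some $g\colon D \to U$, and factor $\varepsilon$ through the preenvelope $u$ as $\varepsilon = h u$ for some $h\colon U \to D$. Then $(hg)\varepsilon = h u = \varepsilon$, so $hg$ is an automorphism of $D$; in particular $h$ is a split epimorphism and $g$ is a split monomorphism, so $D$ is a direct summand of $U$ and the envelope sequence is a direct summand of the sequence $0 \to R \overset{u}\to U \to K \to 0$. Writing $U = D \oplus H$ compatibly, we get $K = (D/R) \oplus H$ with $H \in \D_\G \cap \A = \Add(U \oplus K)$; but $H$ is also $\G$-torsion (being a summand of $K$, which is $\G$-torsion) and $\D_\G$-divisible, hence $H = 0$ by the now-standard "$\G$-torsion and $\G$-torsion-free implies zero" argument (a $\G$-divisible module that is $\G$-torsion satisfies $H = JH$ and $R/J\otimes_R H$-type vanishing forcing $H=0$, as used repeatedly, e.g.\ in Corollary~\ref{C:tor-tens-div}). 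Therefore $D \cong U$ and the displayed sequence is the $\D_\G$-envelope of $R$.

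I expect the main obstacle to be the bookkeeping in the splitting argument — making precise that "the envelope sits inside the preenvelope as a direct summand compatibly with the three-term sequences" so that one genuinely gets $K = (D/R) \oplus H$ and can then kill $H$. The torsion/torsion-free annihilation of $H$ and the identification $\pdim_R U \le 1$ are already in hand from Proposition~\ref{P:pd1} and Lemma~\ref{L:R-env}, so the novelty here is only the minimality bookkeeping, which is routine once set up carefully.
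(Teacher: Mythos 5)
Your proposal reaches the right conclusion, and its first half (Proposition~\ref{P:pd1} gives $\pdim_R U\leq 1$, hence $U\oplus K$ is $1$-tilting with $(U\oplus K)^\perp=\D_\G$, so $K\in\A$ and $u$ is a special $\D_\G$-preenvelope) is exactly the paper's. For minimality, though, the paper is a one-liner: any $f\in\End_R(U)$ with $fu=u$ has the form $f=\id_U-g\pi$ for some $g\colon K\to U$ by Lemma~\ref{L:identity-env}, and $\Hom_R(K,U)=0$ because $K$ is $\G$-torsion and $U$ is $\G$-torsion-free (noted in Subsection~\ref{SS:homological}); hence $f=\id_U$ and $u$ is already an envelope, with no need to split off a complement. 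Your route via Proposition~\ref{P:Xu-env} and the factorisations $u=g\varepsilon$, $\varepsilon=hu$ is also viable, and the bookkeeping you worry about does go through: $hg$ is an automorphism of $D$, so $D$ splits off $U$ compatibly with the two short exact sequences and $K\cong D/R\oplus H$.

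However, your justification for $H=0$ is wrong as stated: a module that is $\G$-divisible and $\G$-torsion need not vanish (over $\bbZ$ with the Gabriel topology of nonzero ideals, the Pr\"ufer group $\bbZ(p^{\infty})$ is both divisible and torsion, yet $\bbZ(p^\infty)\otimes_\bbZ\bbZ(p^\infty)=0$). The principle you actually need is the one you name --- ``$\G$-torsion and $\G$-torsion-free implies zero'' --- and it applies because $H$ is a direct summand of $U$, and $U$ is $\G$-torsion-free (the $\G$-torsion submodule of $M$ is $\Ker(M\to M\otimes_R U)$, which vanishes for $M=U$ since $u$ is a ring epimorphism). With that substitution your argument closes, but it remains considerably longer than the paper's direct minimality check via $\Hom_R(K,U)=0$.
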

\begin{proof}
By Proposition~\ref{P:pd1} $\pdim U \leq 1$, so from the discussion in Section~\ref{S:gab-top}, $U \oplus K$ is a $1$-tilting module such that $(U \oplus K)^\perp = \D_\G$. Thus $K \in \A$ and so $u$ is a $\D_\G$-preenvelope. To see that $u$ is an envelope, note that $\Hom_R(K,U)=0$, so by Lemma~\ref{L:identity-env}, if $u = f u$, then $f = \text{id}_U$ is an automorphism of $U$, thus $u$ is a $\D_\G$-envelope as required. 
\end{proof}

Later in Example~\ref{Ex:T-not-env} we give an example of a ring $R$ and $1$-tilting cotorsion class $\T$ where $R$ has a $\T$-envelope, but $\T$ is not enveloping. This result uses our characterisation of the rings over which a $1$-tilting class $\T$ is enveloping in Theorem~\ref{T:characterisation}.

 We now begin by showing that when $R$ is a commutative local ring, if $\D_\G$ is enveloping in $\ModR$ then for each $J \in \G$, $R/J$ is a perfect ring. We will use the ring isomorphism $\alpha: \Lambda(R) \cong \mathfrak{R}$ of Proposition~\ref{P:ringiso}.
\begin{lem}\label{L:Kindecomp}
Let $R$ be a commutative local ring and $u:R \to U$ a flat injective ring epimorphism and let $K$ denote $U/R$. Then $K$ is indecomposable.
\end{lem}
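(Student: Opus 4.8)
The plan is to deduce this from the ring isomorphism $\mathfrak R=\End_R(K)\cong\Lambda(R)=\varprojlim_{J\in\G}R/J$ of Proposition~\ref{P:ringiso}, together with the standard fact that a nonzero module is indecomposable if and only if its endomorphism ring has no idempotents besides $0$ and $1$. So, setting aside the degenerate case in which $u$ is an isomorphism (then $K=0$), it suffices to prove that the ring $\Lambda(R)$ has no nontrivial idempotents.

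First I would record two elementary facts. For each $J\in\G$ with $J\neq R$ the quotient $R/J$ is again a local ring: since $R$ is local with maximal ideal $\m$ and the proper ideal $J$ is contained in $\m$, the ideals of $R/J$ correspond to the ideals of $R$ between $J$ and $R$, and $\m/J$ is the unique maximal one. Secondly, a commutative local ring has only the idempotents $0$ and $1$: if $e^{2}=e$, then one of $e$, $1-e$ avoids the maximal ideal and is therefore a unit, and cancelling it in $e^{2}=e$, respectively in $(1-e)^{2}=1-e$, forces $e=1$, respectively $e=0$.

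Next I would exploit that $\G$, being a filter of ideals, is directed under reverse inclusion (for $J_{1},J_{2}\in\G$ one has $J_{1}\cap J_{2}\in\G$ with $J_{1}\cap J_{2}\subseteq J_{i}$), so that $\Lambda(R)$ is a genuine inverse limit taken along the canonical ring surjections $R/J'\twoheadrightarrow R/J$ for $J'\subseteq J$. Let $e=(e_{J})_{J\in\G}\in\Lambda(R)$ be an idempotent; each $e_{J}$ is then an idempotent of $R/J$, hence $e_{J}\in\{0,1\}$ by the second fact above. Given two indices $J_{1},J_{2}$, put $J_{3}=J_{1}\cap J_{2}\in\G$; the transition maps $R/J_{3}\to R/J_{i}$ are ring homomorphisms and so carry $0$ to $0$ and $1$ to $1$, whence $e_{J_{1}}=e_{J_{3}}=e_{J_{2}}$. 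Thus all components of $e$ coincide, so $e$ is either the zero tuple or the identity tuple of $\Lambda(R)$, i.e.\ $e\in\{0,1\}$. Transporting this along the isomorphism $\alpha\colon\Lambda(R)\to\mathfrak R$ shows that $\End_R(K)$ has no nontrivial idempotents, so $K$ is indecomposable.

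I do not expect a real obstacle here: the substantive input, namely the identification $\End_R(K)\cong\varprojlim_{J\in\G}R/J$, is already available from Proposition~\ref{P:ringiso}, and what remains is the routine bookkeeping with the inverse system (directedness and the compatibility of idempotent components) together with the harmless isolation of the case $K=0$. If one prefers, the same argument shows slightly more, namely that $\Lambda(R)$ is itself a local ring, but only the absence of nontrivial idempotents is needed.
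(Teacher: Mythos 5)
Your proof is correct and follows essentially the same route as the paper: both pass through the isomorphism $\End_R(K)\cong\Lambda(R)$ of Proposition~\ref{P:ringiso} and show that $\Lambda(R)$ has no nontrivial idempotents using that $R$ is local. The only difference is packaging — you observe that each $R/J$ is local and use compatibility along the inverse system, whereas the paper lifts the idempotent to representatives $r_J\in R$ and checks directly that each is a unit, which is the same locality argument in disguise.
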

\begin{proof}
It is enough to show that every idempotent of $\End_R(K)$ is either the zero homomorphism or the identity on $K$. Let $\m$ denote the maximal ideal of $R$. Take a non-zero idempotent $e \in \End_R(K)$. Then there is an associated element $\alpha^{-1}(e)=\tilde{r}:=(r_J +J)_{J \in \G} \in \Lambda(R)$ via the ring isomorphism $\alpha:\Lambda(R) \cong \mathfrak{R}$ of Proposition~\ref{P:ringiso}. Clearly $\tilde{r}$ is also non-zero and an idempotent in $\Lambda(R)$. We will show this element is the identity in $\Lambda(R)$.

As $\tilde{r}$ is non-zero, there exists a $J_0 \in \G$ such that $r_{J_0} \notin J_0$. Also, $\tilde{r} \cdot \tilde{r} - \tilde{r} =0$, hence
\[
r_{J_0}r_{J_0} - r_{J_0} = r_{J_0}(r_{J_0} -1_R) \in J_0.
\] 
We claim that $r_{J_0}$ is a unit in $R$. Suppose not, then $r_{J_0} \in \m$, hence $r_{J_0} -1_R$ is a unit, which implies that $r_{J_0} \in J_0$, a contradiction.

Consider some other $J \in \G$ such that $J \neq R$. $r_{J \cap J_0} - r_{J_0} \in J_0$, hence $r_{J \cap J_0} \notin J_0$. Therefore, by a similar argument as above, $r_{J \cap J_0}$ is a unit in $R$. As $r_{J \cap J_0} - r_{J} \in J$ and $r_{J \cap J_0}$ is a unit, $r_J \notin J$. Therefore by a similar argument as above $r_J$ is a unit in $R$ for each $J \in \G$ and we conclude that $\tilde{r}$ is a unit in $\Lambda(R)$. 

Finally, as $r_J(r_J - 1_R) \in J$ for every $J$, and $\tilde{r}:=(r_J +J)_{J \in \G} $ is a unit, it follows that $r_J - 1_R \in J$ for each $J$, implying that $\tilde{r}$ is the identity in $\Lambda(R)$. 
\end{proof}
\begin{prop}\label{P:localperfect}
Let $R$ be a commutative local ring and consider the $1$-tilting cotorsion pair $(\A, \D_\G)$ induced by the flat injective ring epimorphism $u:R \to U$. If $ \D_\G$ is enveloping in $\RMod$, then $R/J$ is a perfect ring for every $J \in \G$. 
\end{prop}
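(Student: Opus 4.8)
The plan is to reduce the perfectness of each $R/J$ to the two classical criteria in Proposition~\ref{P:perfect}: that $R/J$ is a finite product of local rings and that each factor has a T-nilpotent maximal ideal. Since $R$ is local and $R/J$ is a quotient, the first task is to see that $R/J$ is again local; this should follow from Lemma~\ref{L:Kindecomp}, which tells us $K$ is indecomposable, hence $\mathfrak R=\End_R(K)$ has no nontrivial idempotents, hence (by Proposition~\ref{P:ringiso}) neither does $\Lambda(R)=\varprojlim_{J\in\G}R/J$, and neither does any discrete quotient $R/J\cong\mathfrak R/J\mathfrak R\cong\Lambda(R)/J\Lambda(R)$ by Lemma~\ref{L:discreteiso}. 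A commutative ring with no nontrivial idempotents that is moreover a quotient of the local ring $R$ is local (its unique maximal ideal being the image of $\m$). So the real content is the T-nilpotency of $\m/J$ in $R/J$.

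For the T-nilpotency, I would exploit the hypothesis that $\D_\G$ is enveloping together with the machinery of Theorem~\ref{T:Xu-sums}. Fix $J\in\G$; by Corollary~\ref{C:U-envelope} we may work in the setting $\D_\G=\Gen U$ with $(\A,\D_\G)$ the $1$-tilting cotorsion pair of $U\oplus K$, and by Lemma~\ref{L:torsion-env} the $\D_\G$-envelope $0\to R/J\to D(J)\to A(J)\to 0$ of the $\G$-torsion module $R/J$ has $D(J)$ and $A(J)$ both $\G$-torsion. Now take any sequence $a_1,a_2,\dots$ of elements of $\m$ (we want to show the product eventually lands in $J$, i.e.\ $\m/J$ is T-nilpotent — actually one shows the Jacobson radical of $R/J$ is T-nilpotent). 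Form the countable direct sum $\bigoplus_n(R/J)_n\hookrightarrow\bigoplus_n D(J)_n\to\bigoplus_n A(J)_n$, which is again a $\D_\G$-envelope by Theorem~\ref{T:Xu-sums}(i) (using that $\D_\G$ is closed under direct sums), and let $f_n\colon D(J)_n\to D(J)_{n+1}$ be multiplication by $a_n$. Since each $a_n\in\m$ and $R/J$ is a $\G$-torsion cyclic module, one checks $f_n$ kills the copy $(R/J)_n$ inside $D(J)_n$ (here the local hypothesis and the structure of $R/J$ enter: we need $a_n(R/J)\subseteq$ the image, which holds because $\overline{a_n}$ is in the radical of $R/J$ and acts nilpotently on the cyclic generator — this is the point to be careful about). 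Then Theorem~\ref{T:Xu-sums}(ii) gives, for the generator $x\in D(J)_1$ corresponding to $1+J$, an integer $m$ with $f_m\cdots f_1(x)=0$, i.e.\ $a_m a_{m-1}\cdots a_1(1+J)=0$, i.e.\ $a_m\cdots a_1\in J$. That is exactly left T-nilpotency of $\m/J$ in $R/J$.

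Combining: $R/J$ is local with T-nilpotent maximal ideal, so by Proposition~\ref{P:perfect} it is perfect. The main obstacle, I expect, is the verification that multiplication by $a_n\in\m$ annihilates the canonical copy of $R/J$ sitting inside its envelope $D(J)$ — equivalently, that the induced endomorphism of $D(J)$ restricted along the envelope map is "zero on $R/J$". One cannot just say $a_n$ kills $R/J$ (it does not, unless $a_n\in J$); rather one must arrange the chain of maps so that Theorem~\ref{T:Xu-sums}(ii) applies, which typically means first passing to the quotient by $\m$-torsion or using that in the local ring every element of $\m$ multiplied sufficiently often kills the cyclic $\G$-torsion module $R/J$ — so one may need to take the maps $f_n$ to be multiplication by $a_n$ composed with a projection, or to iterate so that $f_n(R/J)_n = 0$ genuinely holds after replacing $a_n$ by a suitable power. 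Getting this bookkeeping right, and confirming it yields precisely the T-nilpotency condition rather than a weaker statement, is where the care is needed; everything else is assembling results already proved in Sections~\ref{S:envelope}, \ref{S:compl-endK} and the present section.
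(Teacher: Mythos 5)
Your reduction to Proposition~\ref{P:perfect} (locality plus T-nilpotency of the maximal ideal of $R/J$) is the right target, but the core of your argument has a genuine gap, and it is exactly the one you flag yourself. Theorem~\ref{T:Xu-sums}~(ii) can only be applied when the connecting maps $f_n$ vanish on the enveloped modules, and in your setup $f_n$ is multiplication by $a_n\in\m\setminus J$ on the envelope $D(J)$ of $R/J$, which does \emph{not} kill the copy of $R/J$. The repair you sketch --- that $\overline{a_n}$ ``acts nilpotently on the cyclic generator'' of $R/J$ --- is not available: nilpotency of the elements of $\m/J$ is a consequence of the T-nilpotency you are trying to prove, so assuming it is circular (and it can fail before the proposition is established). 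Replacing $a_n$ by a power or inserting projections does not rescue the argument either: showing that a product $a_1^{k_1}\cdots a_m^{k_m}$ lands in $J$ is strictly weaker than T-nilpotency, which requires the unmodified product $a_1a_2\cdots a_m$ to lie in $J$. Also, a small side remark on your first paragraph: the passage from ``$\Lambda(R)$ has no nontrivial idempotents'' to ``neither does its discrete quotient $R/J$'' is not valid in general (idempotents can appear in quotients), but this is harmless since $R/J$ is local simply because it is a proper quotient of the local ring $R$; no idempotent argument is needed there.

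The paper's proof circumvents the obstruction by enveloping different modules. Given the sequence $(a_i)_i$ in $\m\setminus J$, it considers for each $i$ the special preenvelope $0\to R/a_iR\to U/a_iR\to K\to 0$ (here $\pdim U\leq 1$ via Proposition~\ref{P:pd1}, so $K\in\A$), and uses the indecomposability of $K$ (Lemma~\ref{L:Kindecomp}) together with the fact that $R/a_iR$ is not $\G$-divisible to conclude that this preenvelope is already the $\D_\G$-envelope --- this is where Lemma~\ref{L:Kindecomp} actually enters, rather than through idempotents of quotients. Taking the countable direct sum of these envelopes and letting $f_i\colon U/a_iR\to U/a_{i+1}R$ be multiplication by $a_{i+1}$, the hypothesis $f_i(R/a_iR)=0$ holds for trivial reasons ($a_{i+1}R\subseteq a_{i+1}R$), so Theorem~\ref{T:Xu-sums}~(ii) applies and yields, for every $z\in U$, an $n$ with $a_{n+1}\cdots a_2z\in R$. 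Finally, Lemma~\ref{L:finmanyann} supplies finitely many elements $z_1,\dots,z_n\in U$ with $\bigcap_j\Ann_R(z_j+R)\subseteq J$, and taking the maximum of the corresponding indices places the product $a_m\cdots a_2$ in $J$, giving the T-nilpotency of $\m/J$. So the statement is correct, but your route as written does not go through without this change of enveloped modules (or some other device playing the same role).
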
 
\begin{proof}
Let $\m$ denote the maximal ideal of $R$. As $R$ is local, to show that $R/J$ is perfect it is enough to show that for every sequence of elements $\{a_1, a_2, \dots, a_i, \dots \}$ with $a_i \in \m \setminus J$, there exists an $m >0$ such that the product $a_1 a_2 \cdots a_m \in J$ (that is  $\m/J$ is T-nilpotent) by Proposition~\ref{P:perfect}.

Fix a $J\in \G$ and take $\{a_1, a_2, \dots, a_i, \dots \}$ as above. Consider the following preenvelope of $R/a_iR$.
\[
0 \to R/a_iR \hookrightarrow U/a_iR \to K \to 0
\]
As $R$ is local, by Lemma~\ref{L:Kindecomp}, $K$ is indecomposable, and as $R/a_iR$ is not $\G$-divisible this is an envelope of $R/a_iR$.

We will use the T-nilpotency of direct sums of envelopes from Theorem~\ref{T:Xu-sums}.
Consider the following countable direct sum of envelopes of $R/a_iR$ which is itself an envelope by Theorem~\ref{T:Xu-sums}~(i). 
 \[ 0 \to \bigoplus_{\substack{
   i>0
  }} R/a_iR \hookrightarrow \bigoplus_{\substack{
   i>0
  }}U/a_iR \to \bigoplus_{\substack{
   i>0
  }}K \to 0 \] 
For each $i>0$, we define a homomorphism $f_i:U/a_iR \to U/a_{i+1}R$ between the direct summands to be the multiplication by the element $a_{i+1}$. 
%
Then clearly $R/a_iR \subseteq U/a_iR$ vanishes under the action of $f_i = \dot{a}_{i+1}$, hence we can apply Theorem~\ref{T:Xu-sums}~(ii) to the homomorphisms $\{f_i\}_{i>0}$. So, for every $z + a_1R \in U/a_1R$, there exists an $n>0$ such that 
\[ f_n   \cdots   f_2   f_1 (z+a_1R) = 0 \in U/a_{n+1}R,\]
which can be rewritten as
 \[
 a_{n+1} \cdots a_3 a_2 (z) \in a_{n+1}R.
 \]
By Lemma~\ref{L:finmanyann}, there exist $z_1, z_2, \dots , z_n \in U$ such that 
\[
\bigcap_{\substack{
   0 \leq j \leq n}}
   \Ann_R(z_j +R) \subseteq J.
\]
Let $\Omega = \{z_1, z_2, \dots , z_n\}$. For each $z_j$, there exists an $n_j$ such that $a_{n_j+1} \cdots a_3 a_2$ annihilates $z_j$. That is,
\[a_{n_j+1} \cdots a_3 a_2 (z_j) \in a_{n_j+1}R\subseteq R.\]
We now choose an integer $m$ such that $a_{m} \cdots a_3 a_2$ annihilates all the $z_j$ for $a \leq j \leq n$. Set $m = max\{n_j\mid j=1,2\dots, n\}$.
Then this $m$ satisfies the following, which finishes the proof.
\[
a_m a_{m-1} \cdots a_3 a_2 \in \bigcap_{\substack{
   0 \leq j \leq n}}
   \Ann_R(z_j +R) \subseteq J
\]
\end{proof}
Now we extend the result to general commutative rings. Our assumption is that the Gabriel topology $\G$ is arises from a perfect localisation $u:R \to U$ and that the associated $1$-tilting class $\D_\G$ is enveloping in $\RMod$.

\begin{nota} \label{N:simple-env}
There is a preenvelope of the following form induced by the map $u$.
\[
0 \to R/\m \to U / \m \to K \to 0
\]
 Let the following sequence denote an envelope of $R/\m$.
\[
0 \to R/\m \to D(\m) \to X(\m) \to 0
\]
By Proposition~\ref{P:Xu-env}, $D(\m)$ and $X(\m)$ are direct summands of $U/\m$ and $K =U/R$ respectively. For convenience we will consider $R/\m$ as a submodule of $D(\m)$ and $X(\m)$ as a submodule of $K$.
\end{nota}
\begin{rem}\label{R:tors-facts}\emph{
\begin{enumerate}
\item[(i)] Note that for every maximal ideal $\m$ of $R$,  $R/\m$ is $\G$-divisible if and only if, for every $J \in \G$, $J+\m=R$ if and only if for every $J \in \G$, $J\nsubseteq\m$ if and only if $\m\notin\G$. Therefore, we will only consider the envelopes of $R/\m$ where $\m \in \G$. The modules $D(\m)$ and $X(\m)$ will always refer to the components of the envelope of some $R/\m$ where $\m \in \G$.
Additionally, as $R/\m$ is also an $R_\m$-module, it follows by Proposition~\ref{P:B-envelopes} that $D(\m)$ and $X(\m)$ are also $R_\m$-modules.
\item[(ii)] For every $J \in \G$, $(R/J)_\m =0$ if and only if $J \nsubseteq \m$.
\item[(iii)] If $M$ is a $\G$-torsion $R$-module, then $M_\m=0$ for every $\m \notin \G$ which follows by (ii).
\end{enumerate}}
\end{rem}
The following lemma allows us to use Proposition~\ref{P:localperfect} to show that if $\D_\G$ is enveloping in $R$, all localisations $R_\m/J_\m$ are perfect rings where $\m$ is a maximal ideal in $\G$ and $J \in \G$. 

If $R$ is a commutative ring with a maximal ideal $\m$ and $\C$ a class of $R$-modules, we define $\C_\m$ to be the class consisting of localisations of modules in $\C$. That is, $\C_\m = \{C_\m \mid C \in \C\}$.

\begin{lem} \label{L:localfacts}
Let $R$ be a commutative ring and consider the $1$-tilting cotorsion pair $(\A, \D_\G)$ induced from the flat injective ring epimorphism $u:R \to U$. Fix a maximal ideal $\m$ of $R$ and let $u_\m: R_\m \to U_\m$ be the corresponding flat injective ring epimorphism in $\ModR_\m$. Then the following hold.
\begin{enumerate}
\item[(i)] $K_\m = 0$ if and only if $\m \notin \G$.
\item[(ii)] The induced Gabriel topology of $u_\m$ denoted \[\G(\m) = \{L \leq R_\m : L U_\m = U_\m \}\] contains the localisations $\G_\m = \{J_\m : J \in \G \}$.
\item[(iii)]  Suppose $\pdim U \leq 1$. Then $(\D_\G)_\m$ is the $1$-tilting class associated to the flat injective ring epimorphism $u_\m: R_\m \to U_\m$. That is, $(\D_{\G})_\m = \D_{\G(\m)}$.
\item[(iv)] If $\D_\G$ is enveloping in $\ModR$, then $\D_{\G(\m)}$ is enveloping in $\ModR_\m$.
\end{enumerate}
\end{lem}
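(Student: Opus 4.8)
The plan is to handle the four parts more or less in the order stated, bootstrapping each from the previous ones and from the general machinery already set up for perfect localisations. For (i), I would use the description of $K$ as the cokernel of $u$, so that $K_\m \cong U_\m/u_\m(R_\m)$ because localisation is exact and commutes with cokernels. If $\m \notin \G$, then by Remark~\ref{R:tors-facts}(i) there is some $J \in \G$ with $J \not\subseteq \m$, equivalently $J_\m = R_\m$; since $K$ is $\G$-torsion, every element of $K$ is killed by some ideal of $\G$, and localising at $\m$ kills all $\G$-torsion that is supported off $\m$ — more precisely, $K_\m$ is $\G_\m$-torsion and $R_\m \in \G_\m$, forcing $K_\m = 0$. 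Conversely, if $K_\m = 0$ then $u_\m$ is an isomorphism, so $U_\m = R_\m$; but if $\m \in \G$ then $\m R_\m$ is a proper ideal of $R_\m$ lying over an ideal of $\G$, and $\m R_\m \cdot U_\m = U_\m = R_\m$ would force $\m R_\m = R_\m$, a contradiction. (Here I am implicitly using that localisation of the flat ring epimorphism $u$ at $\m$ is again a flat ring epimorphism $u_\m\colon R_\m \to U_\m$, which is standard.)

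For (ii), I would argue directly from the definition of the induced Gabriel topology: given $J \in \G$, we have $JU = U$, and applying the exact functor $(-)_\m$ (which commutes with the tensor/ideal-product in the sense that $(JU)_\m = J_\m U_\m$) yields $J_\m U_\m = U_\m$, so $J_\m \in \G(\m)$; hence $\G_\m \subseteq \G(\m)$. For (iii), the key input is Remark~\ref{R:pdU=1}: under $\pdim_R U \le 1$, the module $U \oplus K$ is $1$-tilting with tilting class $\D_\G = \Gen_R(U)$, and likewise $u_\m$ gives the $1$-tilting class $\D_{\G(\m)} = \Gen_{R_\m}(U_\m)$ once we know $\pdim_{R_\m} U_\m \le 1$ (projective dimension can only drop under localisation). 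Since $\Gen$ commutes with localisation — $(\Gen_R U)_\m = \Gen_{R_\m}(U_\m)$, because a module is a quotient of a direct sum of copies of $U$ iff after localising it is a quotient of a direct sum of copies of $U_\m$, using that localisation is exact and that every $R_\m$-module is the localisation of an $R$-module — we get $(\D_\G)_\m = \D_{\G(\m)}$. Alternatively, and perhaps more cleanly, I would characterise $\D_{\G(\m)}$ as $\{X \in \ModR_\m : R_\m/L \otimes_{R_\m} X = 0 \ \forall L \in \G(\m)\}$, observe via (ii) and Lemma~\ref{L:finmanyann} (applied to $u_\m$) that it suffices to test on the $L$'s coming from annihilators of elements of $K_\m$, and match these against the $\G$-divisibility condition after localisation; but the $\Gen$-description is shorter.

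For (iv) — which I expect to be the main obstacle — the statement is that enveloping passes to localisations. The strategy: given an $R_\m$-module $N$, view it as an $R$-module (restriction of scalars along $R \to R_\m$), take its $\D_\G$-envelope $\mu\colon N \to D$ in $\ModR$, and show that $\mu_\m\colon N_\m \to D_\m$ is a $\D_{\G(\m)}$-envelope of $N_\m \cong N$ in $\ModR_\m$. For the preenvelope property: $D \in \D_\G$ gives $D_\m \in (\D_\G)_\m = \D_{\G(\m)}$ by (iii); and given $f'\colon N_\m \to C'$ with $C' \in \D_{\G(\m)} \subseteq \D_\G$ (a $\G(\m)$-divisible $R_\m$-module is $\G$-divisible as an $R$-module, since $\G_\m \subseteq \G(\m)$ and divisibility by $J_\m$ over $R_\m$ forces divisibility by $J$ over $R$ using $JC' \supseteq J_\m C' = C'$), restriction of scalars turns $f'$ into an $R$-map $N \to C'$, which factors through $\mu$; then localise the factorisation and use that $C'_\m \cong C'$ since $C'$ is already an $R_\m$-module. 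For minimality: suppose $g \in \End_{R_\m}(D_\m)$ with $g \mu_\m = \mu_\m$. The subtlety is that not every endomorphism of $D_\m$ need come from one of $D$, so I cannot directly invoke minimality of $\mu$. Instead I would argue that $D_\m$ is a direct summand of $D$ — this holds because $D$ is an $R_\m$-module: indeed $D \in \D_\G$ and, by Corollary~\ref{C:U-envelope} and Lemma~\ref{L:R-env} type reasoning, $D$ is $\G$-closed hence a module over $R_\G$, and $R_\m$ is a localisation of $R_\G$-modules... actually the cleanest route is to note $N$ is an $R_\m$-module, so its $\D_\G$-envelope $D$ is an $R_\m$-module by Proposition~\ref{P:B-envelopes} applied to the $R$-algebra structure (as was done for $D(\m)$ in Remark~\ref{R:tors-facts}(i)); then $D_\m = D$ and $\mu_\m = \mu$, endomorphisms of $D_\m$ over $R_\m$ are in particular endomorphisms over $R$ fixing $\mu(N)$, and minimality of $\mu$ as an $R$-envelope gives that $g$ is an automorphism. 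Finally one checks $\mu$ is still a $\D_{\G(\m)}$-\emph{pre}envelope over $R_\m$ (done above), so it is a $\D_{\G(\m)}$-envelope. The delicate point to get right is precisely the $R_\m$-module structure on $D$ and the claim $D_\m \cong D$; I would make this rigorous by invoking Proposition~\ref{P:B-envelopes} with $\alpha$ ranging over multiplication by units/elements of $R \setminus \m$, exactly as in Remark~\ref{R:tors-facts}(i).
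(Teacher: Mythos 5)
Your proposal is correct and follows essentially the same route as the paper: localising the flat epimorphism and the torsion module $K$ for (i)--(ii), using the $1$-tilting module $U\oplus K$ and its localisation to identify $(\D_\G)_\m$ with $\D_{\G(\m)}$ in (iii), and in (iv) observing via Proposition~\ref{P:B-envelopes} that the $\D_\G$-envelope of an $R_\m$-module is itself an $R_\m$-module, so that both the preenvelope and the minimality properties transfer along the ring epimorphism $R\to R_\m$. The only (harmless) deviations are that you verify $(\Gen_R U)_\m=\Gen_{R_\m}U_\m$ by hand where the paper cites \cite[Proposition 13.50]{GT12}, and in (iv) you check the preenvelope property directly via the inclusion $\D_{\G(\m)}\subseteq\D_\G$ under restriction of scalars, whereas the paper exhibits the cokernel in $\Add(U_\m\oplus K_\m)\subseteq{}^\perp\D_{\G(\m)}$.
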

\begin{proof}
\begin{enumerate}
\item[(i)] Since $K$ is $\G$-torsion, $\m \notin \G$ implies $K_\m=0$ by Remark~\ref{R:tors-facts}~(iii). For the converse, suppose $K_\m=0$. If $\m \in \G$ then $R_\m \cong U_\m = \m_\m U_\m \cong \m_\m R_\m$, a contradiction.
Note that if $\m \notin \G$ the rest of the lemma follows trivially. 
\item[(ii)] Take $J_\m \in \G_\m$. Then $ R_\m/ J_\m \otimes_R U_\m \cong (R/J \otimes_R U) \otimes_R R_\m = 0 $, so $J_\m \in \G(\m)$.
\item[(iii)] If $\pdim U \leq 1$, $U \oplus K$ is a $1$-tilting module for $\D_\G$ by \cite[Theorem 5.4]{H}. That $(\D_\G)_\m$ is the $1$-tilting class associated to the $1$-tilting module $(U \oplus K)_\m$ is \cite[Proposition 13.50]{GT12}, therefore $\Gen (U_\m) = (\D_\G)_\m$ in $\ModR_\m$. As $u_\m :R_\m \to U_\m$ is a flat injective ring epimorphism and $\pdim_{R_\m}U_\m \leq 1$ the $1$-tilting classes $\Gen(U_\m)$ and $\D_{\G(\m)} $ coincide in $\ModR_\m$ again by \cite[Theorem 5.4]{H}. Thus $(\D_{\G})_\m = \D_{\G(\m)}$.
\item[(iv)] Assume that $\D_\G$ is enveloping in $\ModR$ and take some $M \in \ModR_\m$ with the following $\D_\G$-envelope.
\[
0 \to M \to D \to X \to 0
\]
We claim that $M$ has a $\D_{\G(\m)}$-envelope in $\ModR_\m$. Since $M \in \ModR_\m$, $D$ and $X$ are $R_\m$-modules by Proposition~\ref{P:B-envelopes}. By Proposition~\ref{P:pd1} $\pdim U \leq 1$. By (iii), $(\D_{\G})_\m = \D_{\G(\m)}$ so $D \in \D_{\G(\m)}$. Moreover, $X \in {}^\perp \D_{\G(\m)}$ as $X \in \Add(U \oplus K) \cap \ModR_\m$ so $X \in \Add(U_\m \oplus K_\m)$.  Since $R \to R_\m$ is a ring epimorphism, any direct summand of $D$ which contains $M$ in $\ModR_\m$ would also be a direct summand in $\ModR$. Thus we conclude that $0\to M\to D\to X\to 0$ is a $\D_{\G(\m)}$-envelope of $M$ in $\ModR_\m$.
\end{enumerate}
\end{proof}
By the above lemma, if $\D_\G$ is enveloping in $\ModR$, then $\D_{\G(\m)}$ is enveloping in $\ModR_\m$. Next we show that, under our enveloping assumption, all $\G$-torsion modules are isomorphic to the direct sums of their localisations at maximal ideals. 

The proof of the following lemma uses an almost identical argument to the proof of Lemma~\ref{L:torsion-env}.

\begin{lem}\label{L:Xmnilpotent} 
Let $u:R \to U$ be a flat injective ring epimorphism, $\G$ the associated Gabriel topology and suppose that $\D_\G$ is enveloping. Let $D(\m)$ and $X(\m)$ be as in Notation~\ref{N:simple-env} and fix a maximal ideal $\m \in \G$. For every element $d \in D(\m)$ and every element $a \in \m$, there is a natural number $n > 0$ such that $a^n d = 0$. Moreover, for every element $x \in X(\m)$ and every element $a \in \m$, there is a natural number $n > 0$ such that $a^n x = 0$.
\end{lem}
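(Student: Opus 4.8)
The plan is to copy, almost verbatim, the strategy of Lemma~\ref{L:torsion-env}, with the finitely generated ideal $J$ there replaced by a single element $a \in \m$; the essential input is the T-nilpotency statement Theorem~\ref{T:Xu-sums}(ii), applied to a countable direct sum of copies of the envelope of $R/\m$ recorded in Notation~\ref{N:simple-env}.

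Concretely, I would proceed as follows. Fix $\m \in \G$ and the $\D_\G$-envelope
\[
0 \to R/\m \to D(\m) \to X(\m) \to 0
\]
from Notation~\ref{N:simple-env}, and fix $a \in \m$. Since $\D_\G$ is a torsion class it is closed under countable direct sums, and since $\D_\G$ is enveloping the module $\bigoplus_{n} (R/\m)$ has a $\D_\G$-envelope; hence by Theorem~\ref{T:Xu-sums}(i) the sequence
\[
0 \to \bigoplus_{n} (R/\m)_n \to \bigoplus_{n} D(\m)_n \to \bigoplus_{n} X(\m)_n \to 0
\]
is a $\D_\G$-envelope of $\bigoplus_{n}(R/\m)$, with $(R/\m)_n \leq D(\m)_n$. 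Now let $f_n \colon D(\m)_n \to D(\m)_{n+1}$ be multiplication by $a$; as $a \in \m$ annihilates $R/\m$, we have $f_n((R/\m)_n)=0$, so Theorem~\ref{T:Xu-sums}(ii) gives, for each $d \in D(\m)$, an integer $m$ with $f_m \circ \cdots \circ f_1(d)=0$, that is $a^m d = 0$ in $D(\m)$. This settles the claim for $D(\m)$, and the claim for $X(\m)$ then follows at once: given $x \in X(\m)$, lift it to some $d \in D(\m)$ along the surjection $D(\m) \twoheadrightarrow X(\m)$, choose $m$ with $a^m d = 0$, and conclude $a^m x = 0$ by $R$-linearity of the surjection.

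I do not expect a genuine obstacle here; the argument is essentially a transcription of Lemma~\ref{L:torsion-env}. The only points that deserve a word of justification are that $0 \to R/\m \to D(\m) \to X(\m) \to 0$ really is an envelope (in particular that $R/\m \notin \D_\G$, which holds because $\m \in \G$, see Remark~\ref{R:tors-facts}(i)), and that forming the countable direct sum preserves the envelope property — but the latter is precisely the content of Theorem~\ref{T:Xu-sums}(i), whose hypotheses hold since $\D_\G$ is enveloping and closed under countable direct sums.
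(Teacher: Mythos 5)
Your proposal is correct and follows essentially the same route as the paper: a countable direct sum of copies of the envelope of $R/\m$, Theorem~\ref{T:Xu-sums}(i) to see it is again an envelope, Theorem~\ref{T:Xu-sums}(ii) with each $f_n$ equal to multiplication by $a$, and then the statement for $X(\m)$ by lifting along the surjection $D(\m)\twoheadrightarrow X(\m)$. The extra justifications you flag (closure of $\D_\G$ under direct sums and the enveloping hypothesis to apply Theorem~\ref{T:Xu-sums}(i)) are exactly the ones the paper uses implicitly.
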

\begin{proof}
We will use the T-nilpotency of direct sums of envelopes as in Theorem~\ref{T:Xu-sums}~(ii).
Consider the following countable direct sum of envelopes of $R/\m$ which is itself an envelope by Theorem~\ref{T:Xu-sums}~(i). 
 \[ 0 \to \bigoplus_{\substack{
   i>0
  }} (R/\m)_{i} \to \bigoplus_{\substack{
   i>0
  }}D(\m)_{i} \to \bigoplus_{\substack{
   i>0
  }}X(\m)_{i} \to 0 \] 
For a fixed element $a \in \m$, we choose the homomorphisms $f_i:D(\m)_{i} \to D(\m)_{i+1}$ between the direct summands to be multiplication by $a$.   
Then clearly $R/\m \subseteq D(\m)$ vanishes under the action of $f_i = \dot{a}$, hence we can apply Xu's Theorem: for every $d \in D(\m)$, there exists an $n$ such that 
\[ f_n   \cdots   f_2   f_1 (d) = 0 \in D(\m)_{n+1}.\]
 Since each $f_i$ acts as multiplication by $a$, for every $d \in D$ there is an integer $n$ for which $a^n d = 0$, as required.
 
It is straightforward to see that $X(\m)$ has the same property as $X(\m)$ is an epimorphic image of $D(\m)$. 
 \end{proof}
\begin{lem}\label{L:suppXm} 
Let $u:R \to U$ be a flat injective ring epimorphism and suppose $\D_\G$ is enveloping. Let $\m \in \G$ and let $X(\m)$ be as in Notation~\ref{N:simple-env}. The support of $X(\m)$ is exactly $\{ \m \}$, and each $X(\m) \cong X(\m)_\m$ is  $K_\m$.
\end{lem}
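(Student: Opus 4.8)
The plan is to localise everything at $\m$ and to combine two ingredients: the indecomposability of $K_\m$ (because $R_\m$ is local) and the local nilpotency of the elements of $X(\m)$ coming from the enveloping hypothesis. To set up, note that by Remark~\ref{R:tors-facts}(i) we are always in the case $\m\in\G$, so $R/\m$ is not $\G$-divisible; hence $X(\m)\neq 0$, since otherwise the envelope $0\to R/\m\to D(\m)\to X(\m)\to 0$ would give $R/\m\cong D(\m)\in\D_\G$, a contradiction. Also $K_\m\neq 0$ by Lemma~\ref{L:localfacts}(i). Recall further, again from Remark~\ref{R:tors-facts}(i) and Notation~\ref{N:simple-env}, that $X(\m)$ is an $R_\m$-module and a direct summand of $K=U/R$.

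First I would settle the support. Since $\D_\G$ is enveloping, Lemma~\ref{L:Xmnilpotent} applies to $X(\m)$: for every $x\in X(\m)$ and every $a\in\m$ there is $n>0$ with $a^n x=0$. Hence $\m\subseteq\sqrt{\Ann_R(x)}$ for every such $x$, and as $\m$ is maximal this forces $\sqrt{\Ann_R(x)}=\m$ whenever $x\neq 0$ (it cannot be all of $R$). Therefore $\operatorname{Supp}_R(Rx)=V(\Ann_R(x))=\{\m\}$ for every nonzero $x$, and since $\operatorname{Supp}_R(X(\m))$ is the union of the $\operatorname{Supp}_R(Rx)$ over the cyclic submodules $Rx$, we conclude $\operatorname{Supp}_R(X(\m))=\{\m\}$, using $X(\m)\neq 0$.

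For the identification with $K_\m$ I would argue as follows. Being an $R_\m$-module, $X(\m)$ is canonically isomorphic to its localisation $X(\m)_\m$ (every element of $R\setminus\m$ already acts invertibly on it); and writing $K=X(\m)\oplus Y$ and localising exhibits $X(\m)_\m$ as a direct summand of $K_\m$. By Lemma~\ref{L:localfacts} the map $u_\m\colon R_\m\to U_\m$ is again a flat injective ring epimorphism, and $R_\m$ is local, so by Lemma~\ref{L:Kindecomp} the module $K_\m=U_\m/R_\m$ is indecomposable. Consequently $X(\m)_\m$ is either $0$ or all of $K_\m$; since $X(\m)\cong X(\m)_\m$ and $X(\m)\neq 0$, it must be all of $K_\m$. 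Thus $X(\m)\cong X(\m)_\m\cong K_\m$, completing the proof.

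The delicate point is the support statement. One is tempted to deduce $\operatorname{Supp}_R(X(\m))\subseteq\{\m\}$ purely from $X(\m)$ being an $R_\m$-module, but that is false in general: an $R_\m$-module can have much larger support when regarded over $R$. It is exactly the enveloping assumption, funnelled through the T-nilpotency conclusion of Lemma~\ref{L:Xmnilpotent}, that collapses the support to the single closed point $\{\m\}$; everything else (stability of flat injective ring epimorphisms and of direct summands under localisation, and the fact that $R_\m$-modules are their own $\m$-localisations) is routine.
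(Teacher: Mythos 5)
Your proof is correct and follows essentially the same route as the paper's: nonvanishing of $X(\m)$ from the envelope being nontrivial, support $\{\m\}$ via the T-nilpotency of Lemma~\ref{L:Xmnilpotent}, and the identification with $K_\m$ via localising the direct-summand decomposition of $K$ and invoking the indecomposability of $K_\m$ from Lemma~\ref{L:Kindecomp}. The only (harmless) difference is cosmetic: the paper argues $X(\m)_\n=0$ directly by inverting some $a\in\m\setminus\n$, whereas you package the same computation as $\sqrt{\Ann_R(x)}=\m$, which in fact gives the support over all primes rather than just maximal ideals.
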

\begin{proof}
We claim that $X(\m)$ is non-zero. Otherwise, $X(\m)=0$ would imply that $R/\m$ is $\G$-divisible, so $R/\m = \m(R/\m) = 0$, a contradiction.

Consider a maximal ideal $\n \neq \m$. Take an element $a \in \m \setminus \n$. Then for any $x \in X(\m)$, $a^n x = 0$ for some $n > 0$, by Lemma~\ref{L:Xmnilpotent} and since $a$ is an invertible element in $R_\n$, $x$ is zero in the localisation with respect to $\n$. This holds for any element $x \in X(\m)$, hence $X(\m)_\n = 0$. 
 
It follows that since $X(\m)$ is non-zero, $X(\m)_\m \neq 0$. As mentioned in Remark~\ref{R:tors-facts}, $X(\m)$ is an $R_\m$-module and since $X(\m)$ is a direct summand of $K$, $X(\m)$ is a direct summand of $K_\m$ which is indecomposable, by Lemma~\ref{L:Kindecomp}. Therefore $X(\m)$ is non-zero and is isomorphic to $K_\m$.
\end{proof}

\begin{lem}\label{L:directsumxm}
Let $u:R \to U$ be a flat injective ring epimorphism and suppose $\D_\G$ is enveloping. Then the sum of the submodules $X(\m)$ in $K$ is a direct sum.
\[
\sum_{\substack{
   \m \in \G
  }} X(\m)
  = 
  \bigoplus_{\substack{
   \m \in \G
  }} X(\m) 
\]
\end{lem}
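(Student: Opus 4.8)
The plan is to identify the sum $\sum_{\m\in\G}X(\m)\subseteq K$ as the image of the canonical $R$-linear map $\bigoplus_{\m\in\G}X(\m)\to K$, and to show this map is injective. Since an element of $\bigoplus_{\m\in\G}X(\m)$ has only finitely many nonzero components, injectivity is equivalent to the following finite statement: whenever $\m_1,\dots,\m_n\in\G$ are pairwise distinct maximal ideals and $x_i\in X(\m_i)$ satisfy $\sum_{i=1}^n x_i=0$ in $K$, then $x_i=0$ for all $i$. So the whole proof reduces to verifying this.

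The key input is Lemma~\ref{L:suppXm}: for each $\m\in\G$ the support of $X(\m)$ is exactly $\{\m\}$, so in particular $(X(\m_j))_{\m_i}=0$ whenever $j\neq i$. Thus I would fix a relation $\sum_{i=1}^n x_i=0$ and an index $i$, apply the exact localization functor $(-)_{\m_i}$, and read off that all terms except the $i$-th vanish, leaving $(x_i)_{\m_i}=0$ in $K_{\m_i}$. Finally, by Remark~\ref{R:tors-facts}(i) the module $X(\m_i)$ is an $R_{\m_i}$-module, so the natural map $X(\m_i)\to(X(\m_i))_{\m_i}$ is an isomorphism; hence $(x_i)_{\m_i}=0$ forces $x_i=0$. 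As $i$ is arbitrary, this proves the displayed relation and hence that the sum is direct.

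There is no serious obstacle here: this is a routine ``disjoint supports'' argument. The only points that require a little attention are the initial reduction to the finite vanishing statement (so that directness of a possibly infinite family of submodules is checked correctly), and making sure that no nonzero element of $X(\m_i)$ can die upon localizing at $\m_i$ — which is guaranteed precisely by the $R_{\m_i}$-module structure on $X(\m_i)$ recorded in Remark~\ref{R:tors-facts}(i).
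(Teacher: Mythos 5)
Your proof is correct and follows essentially the same route as the paper: both arguments reduce directness to a finite vanishing statement and then use Lemma~\ref{L:suppXm} (disjoint supports) via localization. The only cosmetic difference is in the last step, where the paper checks that the offending element localizes to zero at \emph{every} maximal ideal and invokes the local--global principle, whereas you use the $R_{\m_i}$-module structure on $X(\m_i)$ from Remark~\ref{R:tors-facts}(i) to see directly that $(x_i)_{\m_i}=0$ forces $x_i=0$; both are valid.
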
 
\begin{proof}
Recall that $X(\m)$ is non-zero only for $\m \in \G$ by Remark~\ref{R:tors-facts}. Consider an element
\[
x \in X(\m) \cap  \sum_{\substack{
  \n \neq \m\\ \n \in \G
  }} X(\n).
\]
We will show that this element must be zero. By Lemma~\ref{L:suppXm}, since $x \in X(\m)$, $x$ is zero in the localisation with respect to all maximal ideals $\n \neq \m$. But $x$ can also be written as a finite sum of elements $x_i \in X(\n_i)$, each of which is zero in the localisation with respect to $\m$, by Lemma~\ref{L:suppXm}.  Therefore, $(x)_\n =0$ for all maximal ideals $\n$, hence $x = 0$ .
\end{proof}
\begin{prop}\label{P:directsumk}
Let $u:R \to U$ be a flat injective ring epimorphism and suppose $\D_\G$ is enveloping. The module $K$ can be written as a direct sum of its localisations $K_\m$, as follows.
\[
 K \cong  \bigoplus_{\substack{
   \m \in \G
  }} K_\m =  \bigoplus_{\substack{
   \m \in \mSpec{R}
  }} K_\m
\]
\end{prop}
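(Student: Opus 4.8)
The plan is to promote the internal direct sum $\bigoplus_{\m\in\G}X(\m)$ inside $K$ furnished by Lemma~\ref{L:directsumxm} to all of $K$, and then replace each $X(\m)$ by $K_\m$ using Lemma~\ref{L:suppXm}. Write $N := \sum_{\m\in\G}X(\m) = \bigoplus_{\m\in\G}X(\m)\subseteq K$ and consider the short exact sequence $0\to N\to K\to K/N\to 0$. I would prove that $K/N = 0$ by checking that $(K/N)_\m = 0$ for every maximal ideal $\m$ of $R$; since a module over a commutative ring whose localisations at all maximal ideals vanish must be zero, this suffices.

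For the localisation computation, fix a maximal ideal $\m$. Localisation commutes with direct sums, and the support of each $X(\n)$ is $\{\n\}$ by Lemma~\ref{L:suppXm}, so $N_\m = \bigoplus_{\n\in\G}X(\n)_\m = X(\m)_\m$; moreover $X(\m)$ is already an $R_\m$-module by Remark~\ref{R:tors-facts}(i), so $X(\m)_\m = X(\m)$. If $\m\notin\G$ then $X(\m) = 0$ and also $K_\m = 0$ by Lemma~\ref{L:localfacts}(i), so $(K/N)_\m = 0$. If $\m\in\G$, the map $N_\m\to K_\m$ is the localisation of the inclusion $X(\m)\hookrightarrow K$; since $X(\m)$ is a direct summand of $K$ by Notation~\ref{N:simple-env}, $X(\m) = X(\m)_\m$ is a direct summand of $K_\m$. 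But $K_\m$ is the cokernel of the flat injective ring epimorphism $u_\m\colon R_\m\to U_\m$ over the local ring $R_\m$, hence indecomposable by Lemma~\ref{L:Kindecomp}, while $X(\m)\neq 0$ by Lemma~\ref{L:suppXm}; therefore $X(\m)_\m = K_\m$, the map $N_\m\to K_\m$ is an isomorphism, and again $(K/N)_\m = 0$.

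It follows that $K = N = \bigoplus_{\m\in\G}X(\m)$, and combining with the isomorphisms $X(\m)\cong K_\m$ of Lemma~\ref{L:suppXm} gives $K\cong\bigoplus_{\m\in\G}K_\m$; since $K_\m = 0$ whenever $\m\notin\G$ by Lemma~\ref{L:localfacts}(i), this equals $\bigoplus_{\m\in\mSpec{R}}K_\m$. I do not expect a genuine obstacle here; the only point requiring slight care is that one must argue that $X(\m)_\m\to K_\m$ is \emph{literally} an isomorphism rather than merely that $X(\m)$ and $K_\m$ are abstractly isomorphic, which is precisely why the argument passes through "direct summand of an indecomposable module'' rather than any kind of length or rank count.
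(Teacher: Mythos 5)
Your proof is correct and follows essentially the same route as the paper: both start from the internal direct sum $\bigoplus_{\m\in\G}X(\m)\subseteq K$ of Lemma~\ref{L:directsumxm} and verify the inclusion is an equality by checking localisations at every maximal ideal, using Lemma~\ref{L:localfacts}(i) for $\m\notin\G$ and $X(\m)_\m=K_\m$ for $\m\in\G$. The only cosmetic difference is that you re-derive $X(\m)_\m=K_\m$ via the summand-of-an-indecomposable argument, which is exactly the content (and proof) of Lemma~\ref{L:suppXm} that the paper simply cites.
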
 
\begin{proof}
From Lemma~\ref{L:directsumxm}, we have the following inclusion.
\[
  \bigoplus_{\substack{
   \m \in \G
  }} X(\m) \leq K
\]
To see that this is an equality we show that these two modules have the same localisation with respect to every $\m$ maximal in $R$. Recall that by Lemma~\ref{L:localfacts}(i) if $\n$ is maximal, then $K_\n = 0$ if and only if $\n \notin \G$ and by Lemma~\ref{L:suppXm},  $\text{Supp}(X(\m)) = \{ \m\}$. Using these facts, it follows that for $\n \notin \G$,  $K_\n = 0 = ( \bigoplus_{\substack{
   \m \in \G
  }} X(\m))_\n$. Similarly, if $\m \in \G$, then $K_\m = X(\m)_\m$. Hence,
  \[
  \bigoplus_{\substack{
   \m \in \G
  }} X(\m) = K.
\]
Since $K_\m = X(\m)_\m$, it only remains to see that $X(\m) \cong X(\m)_\m$, which follows from Remark~\ref{R:tors-facts}.
\end{proof}

\begin{cor}\label{C:torsion-decomposes}
Let $u:R \to U$ be a flat injective ring epimorphism and suppose $\D_\G$ is enveloping. Then for every $\G$-torsion module $M$, the following isomorphism holds.
 \[
M \cong \bigoplus_{\substack{
   \m \in \G
  }} M_\m = \bigoplus_{\substack{\m \in \mSpec R}} M_\m
\]
Furthermore, it follows that for every $J \in \G$, $J$ is contained in only finitely many maximal ideals of $R$. 
\end{cor}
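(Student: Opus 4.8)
The plan is to prove the displayed isomorphism first for the cyclic modules $M=R/J$ with $J\in\G$, to deduce the general case by a functorial comparison, and to read off the final assertion along the way. We first show that every $J\in\G$ lies in only finitely many maximal ideals. By Proposition~\ref{P:directsumk} the canonical map $K\to\bigoplus_{\m\in\mSpec R}K_\m$ is an isomorphism, so each $z+R\in K$ has non-zero image in $K_\m$ for only finitely many $\m$; as that image is non-zero exactly when $\Ann_R(z+R)\subseteq\m$, the ideal $\Ann_R(z+R)$ is contained in only finitely many maximal ideals. By Lemma~\ref{L:finmanyann} an arbitrary $J\in\G$ contains a finite intersection $\bigcap_{i=1}^{n}\Ann_R(z_i+R)$, hence the product of these ideals; a maximal ideal containing $J$ thus contains one of the factors $\Ann_R(z_i+R)$, and so $J$ lies in only finitely many maximal ideals. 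This is the last assertion of the statement, and it ensures that the canonical map $\eta_M\colon M\to\bigoplus_{\m\in\mSpec R}M_\m$ is well defined for every $\G$-torsion module $M$, since $\Ann_R(x)\in\G$ for each $x\in M$.

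Now let $M=R/J$ with $J\in\G$. By the previous paragraph the set $\mSpec(R/J)$ of maximal ideals of $R$ containing $J$ is finite, say $\{\m_1,\dots,\m_k\}$. For each $i$, $u_{\m_i}\colon R_{\m_i}\to U_{\m_i}$ is a flat injective ring epimorphism and, by Lemma~\ref{L:localfacts}(iv), $\D_{\G(\m_i)}$ is enveloping in $\ModR_{\m_i}$; hence Proposition~\ref{P:localperfect} shows that $(R/J)_{\m_i}=R_{\m_i}/J_{\m_i}$ is a perfect ring. A commutative local perfect ring has $T$-nilpotent, hence nil, maximal ideal, so it has a unique prime ideal; therefore every prime of $R/J$ equals some $\m_i$, and $\Spec(R/J)=\{\m_1,\dots,\m_k\}$ is finite and discrete. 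The idempotents of $R/J$ associated with the partition of $\Spec(R/J)$ into its points then decompose $R/J$ as the product of its localisations at the $\m_i$: $R/J\cong\prod_{i=1}^{k}(R/J)_{\m_i}=\bigoplus_{\m\in\mSpec R}(R/J)_\m$, and this isomorphism is precisely $\eta_{R/J}$. I expect this to be the crucial step: the isomorphism $N\cong\bigoplus_\m N_\m$ fails for a general module $N$ of finite support in $\mSpec R$ (for instance for $k[x]$ localised at the complement of $(x)\cup(x-1)$, regarded as a module over itself), so it is essential to use the enveloping hypothesis, through Proposition~\ref{P:localperfect}, to force $\Spec(R/J)$ to be finite and discrete.

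For a general $\G$-torsion module $M$, choose an exact sequence $\bigoplus_{\beta}R/J_\beta\to\bigoplus_{\alpha}R/J_\alpha\to M\to 0$ with all $J_\alpha,J_\beta\in\G$; this is possible because $M$ is an epimorphic image of a direct sum of modules $R/J_\alpha$ with $J_\alpha\in\G$ and, the torsion pair being hereditary, the kernel of such an epimorphism is again $\G$-torsion, hence likewise an epimorphic image of such a direct sum. The functor $\bigoplus_{\m\in\mSpec R}(-)_\m$ is exact and commutes with arbitrary direct sums, so applying it to this presentation and comparing with the identity functor via the natural maps $\eta$ yields a commutative diagram with exact rows whose two left-hand vertical maps are $\bigoplus_{\beta}\eta_{R/J_\beta}$ and $\bigoplus_{\alpha}\eta_{R/J_\alpha}$, isomorphisms by the previous paragraph. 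The five lemma then forces $\eta_M\colon M\to\bigoplus_{\m\in\mSpec R}M_\m$ to be an isomorphism, as claimed.
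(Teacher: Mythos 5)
Your proof is correct, but it takes a genuinely different route from the paper's. Both arguments hinge on Proposition~\ref{P:directsumk}, but the paper exploits it homologically: for a flat injective ring epimorphism a $\G$-torsion module satisfies $M \cong \Tor^R_1(M,K)$, so $K \cong \bigoplus_{\m} K_\m$ and the commutation of $\Tor$ with direct sums give $M \cong \bigoplus_{\m} \Tor^R_1(M,K_\m) \cong \bigoplus_{\m} M_\m$ in one line, and the finiteness assertion is then read off by applying this to the cyclic module $R/J$, which cannot be an infinite direct sum of non-zero summands. You instead prove the finiteness assertion first (from Proposition~\ref{P:directsumk}, Lemma~\ref{L:finmanyann} and primality of maximal ideals), then treat $M=R/J$ by invoking Lemma~\ref{L:localfacts}(iv) and Proposition~\ref{P:localperfect} to see that each $R_\m/J_\m$ is a perfect, hence zero-dimensional, local ring, so that $\Spec(R/J)$ is finite and discrete and $R/J$ is the product of its localisations, and finally you bootstrap to arbitrary $\G$-torsion modules via a presentation by cyclics and the five lemma. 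This is sound and there is no circularity, since Lemma~\ref{L:localfacts} and Proposition~\ref{P:localperfect} do not depend on the corollary; but it is longer and imports the perfectness machinery that the paper only deploys afterwards in Theorem~\ref{T:rjperfect}, so in effect you prove part of that theorem en route, whereas what you gain is independence from the identification $t_\G(M)\cong\Tor^R_1(M,K)$ and a structural explanation (zero-dimensionality) of why $R/J$ decomposes. Two small points: you should note, via Remark~\ref{R:tors-facts}(iii), that $M_\m=0$ for $\m\notin\G$, so your sum over $\mSpec R$ agrees with the sum over $\m\in\G$ in the statement; and from Proposition~\ref{P:directsumk} you only need that each element of $K$ has non-zero localisation at finitely many maximal ideals, which follows from the internal decomposition $K=\bigoplus_\m X(\m)$ with $\operatorname{Supp}X(\m)=\{\m\}$ --- the phrase ``the canonical map $K\to\bigoplus_\m K_\m$ is an isomorphism'' is a harmless but slightly stronger gloss than what that proposition literally states.
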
 
\begin{proof}
For the first isomorphism, recall that if an $R$-module $M$ is $\G$-torsion, then $M \cong \Tor^R_1(M, K)$. Also, note that in this case, $M_\m \cong \Tor^R_1(M, K)_\m \cong  \Tor^{R_\m}_1(M_\m, K_\m) \cong \Tor^{R}_1(M, K_\m)$. Hence we have the following isomorphisms.
\[
M \cong \Tor^R_1(M, K) \cong \Tor^R_1(M, \bigoplus_{\substack{
   \m \in \G
  }} K_\m) \cong \bigoplus_{\substack{\m \in \G}} \Tor^R_1(M, K_\m) \cong \bigoplus_{\substack{\m \in \G}} M_\m
\]
The fact that
\[
\bigoplus_{\substack{
\m \in \G
}} M_\m = \bigoplus_{\substack{\m \in \mSpec R}} M_\m
\]
follows from Remark~\ref{R:tors-facts}~(iii).
\\
For the final statement of the proposition, one only has to replace $M$ with the $\G$-torsion module $R/J$ where $J \in \G$. Hence as $R/J$ is cyclic, it cannot be isomorphic to an infinite direct sum. 
Therefore, $(R/J)_\m$ is non-zero only for finitely many maximal ideals and the conclusion follows. 
\end{proof} 
We are now in the position to show the main results of this section.
 \begin{thm} \label{T:rjperfect}
 Let $u:R \to U$ be a flat injective ring epimorphism and suppose $\D_\G$ is enveloping. Then $R/J$ is a perfect ring for every $J \in \G$.
\end{thm}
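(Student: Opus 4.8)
The plan is to reduce the global statement to the local case already handled in Proposition~\ref{P:localperfect}, using the structural decomposition of $\G$-torsion modules established in Corollary~\ref{C:torsion-decomposes}. First I would fix $J \in \G$ and observe that, since $\D_\G$ is enveloping, Corollary~\ref{C:torsion-decomposes} applies to the $\G$-torsion module $R/J$, giving $R/J \cong \bigoplus_{\m \in \G} (R/J)_\m$, and moreover $(R/J)_\m$ is nonzero for only finitely many maximal ideals, say $\m_1, \dots, \m_k$. Since $R \to R/J$ is a ring epimorphism, this is a ring decomposition: $R/J \cong \prod_{i=1}^k (R/J)_{\m_i}$, where each factor is $(R/J)_{\m_i} \cong R_{\m_i}/J_{\m_i}$. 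Thus it suffices to show each $R_{\m_i}/J_{\m_i}$ is a perfect ring, because a finite product of perfect rings is perfect (and by Proposition~\ref{P:perfect} perfectness of a commutative ring is detected on finite products of local pieces).

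For the local factors, I would invoke Lemma~\ref{L:localfacts}: since $\D_\G$ is enveloping in $\ModR$, part~(iv) gives that $\D_{\G(\m_i)}$ is enveloping in $\ModR_{\m_i}$, where $u_{\m_i}\colon R_{\m_i} \to U_{\m_i}$ is the induced flat injective ring epimorphism with associated Gabriel topology $\G(\m_i)$. Now $R_{\m_i}$ is a commutative local ring, so Proposition~\ref{P:localperfect} applies directly and yields that $R_{\m_i}/L$ is a perfect ring for every $L \in \G(\m_i)$. In particular, since $J_{\m_i} \in \G_{\m_i} \subseteq \G(\m_i)$ by Lemma~\ref{L:localfacts}(ii), the ring $R_{\m_i}/J_{\m_i}$ is perfect. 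Assembling the finitely many perfect local factors, $R/J \cong \prod_{i=1}^k R_{\m_i}/J_{\m_i}$ is perfect, as required.

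The one point that needs a little care — and which I expect to be the main (though still mild) obstacle — is justifying that the abstract $R$-module isomorphism $R/J \cong \bigoplus_{\m} (R/J)_\m$ of Corollary~\ref{C:torsion-decomposes} is in fact an isomorphism of rings, i.e.\ that it identifies the ring $R/J$ with the product of the local rings $(R/J)_\m = (R/J) \otimes_R R_\m$. This is where the fact that $R \to R/J$ is a ring epimorphism is essential: localising a ring epimorphism is again a ring epimorphism, so $(R/J)_\m$ is a quotient ring of $R/J$ in a canonical way, the finitely many localisation maps $R/J \to (R/J)_{\m_i}$ are ring homomorphisms, and together they induce a ring homomorphism $R/J \to \prod_i (R/J)_{\m_i}$ which is exactly the module isomorphism of Corollary~\ref{C:torsion-decomposes}. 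Everything else is a routine appeal to the cited results.
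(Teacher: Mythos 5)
Your proposal is correct and follows essentially the same route as the paper: decompose $R/J$ via Corollary~\ref{C:torsion-decomposes} into a finite product of the local rings $R_{\m}/J_{\m}$, use Lemma~\ref{L:localfacts} to transfer the enveloping hypothesis to each localisation, apply Proposition~\ref{P:localperfect} to each local factor, and conclude by Proposition~\ref{P:perfect}. Your extra remark justifying that the module decomposition is a ring decomposition (via the ring epimorphism $R\to R/J$) is a point the paper leaves implicit, and it is handled correctly.
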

\begin{proof}
By Corollary~\ref{C:torsion-decomposes}, every $R/J$ is a finite product of local rings $R_\m/J_\m$. Additionally as $(\D_\G)_\m$ is enveloping in $\ModR_\m$ by Lemma~\ref{L:localfacts} each $R_\m/J_\m$ is a perfect ring by Proposition~\ref{P:localperfect}. Therefore, by Proposition~\ref{P:perfect}, $R/J$ itself is perfect.
\end{proof} 
 \begin{thm}\label{T:EndK-properfect}
Let $u:R \to U$ be a flat injective ring epimorphism and suppose $ \D_\G$ is enveloping in $\ModR$. Then the topological ring $\mathfrak{R}=\End(K)$ is pro-perfect.
 \end{thm}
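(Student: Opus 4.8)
The plan is to reduce the statement to Theorem~\ref{T:rjperfect} together with the topological dictionary established in Section~\ref{S:compl-endK}. Recall that $\mathfrak{R}=\End_R(K)$ carries the finite topology and, by Proposition~\ref{P:ringiso}, is a commutative ring isomorphic to $\Lambda(R)=\varprojlim_{J\in\G}R/J$. Being \emph{pro-perfect} asks for four things: that $\mathfrak{R}$ be separated, complete, have a base of neighbourhoods of zero formed by two-sided ideals, and have all of its discrete quotient rings perfect. So I would verify these four conditions in turn, the first three being essentially formal and the last being where Theorem~\ref{T:rjperfect} enters.

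First I would dispose of the three easy requirements. Separatedness is immediate, since an endomorphism of $K$ that vanishes on every finitely generated submodule vanishes on all of $K$. Completeness holds because $\End_R(K)$ is a closed subring of the complete ring $K^{K}$ (the product of copies of the discrete group $K$); equivalently, $\Lambda(R)$ is separated and complete for the projective limit topology, as recorded in Section~\ref{S:compl-endK}, and the topologies correspond under $\alpha$ as in the proof of Proposition~\ref{P:topologies-2}. Finally, since $\mathfrak{R}$ is commutative, the annihilators of the finitely generated submodules of $K$ — which form a base of neighbourhoods of zero — are automatically two-sided ideals.

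The substantive point is that every discrete quotient ring $\mathfrak{R}/V$, with $V$ an open ideal of $\mathfrak{R}$, is perfect. Here I would invoke Proposition~\ref{P:topologies-2}, which supplies $J\in\G$ together with a surjective ring homomorphism $R/J\to\mathfrak{R}/V$. By Theorem~\ref{T:rjperfect} the ring $R/J$ is perfect, and since it is commutative, Proposition~\ref{P:perfect} identifies it as a finite product of local rings each with a T-nilpotent maximal ideal. A quotient of such a ring is visibly of the same form (a quotient of a local ring is local or zero, and the image of a T-nilpotent ideal is again T-nilpotent), hence is perfect again by Proposition~\ref{P:perfect}. Therefore $\mathfrak{R}/V$ is perfect, and combining this with the previous paragraph shows that $\mathfrak{R}$ is pro-perfect.

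I do not expect a genuine obstacle here: all the real work has already been done in Theorem~\ref{T:rjperfect} and in the identification $\mathfrak{R}\cong\Lambda(R)$ of Section~\ref{S:compl-endK}. The only points deserving a moment's care are that every open ideal of $\mathfrak{R}$ dominates some $R/J$ with $J\in\G$ (which is exactly Proposition~\ref{P:topologies-2}) and that perfectness of a commutative ring passes to quotients (which is elementary from Proposition~\ref{P:perfect}); once these are in hand the proof is short.
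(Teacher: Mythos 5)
Your proposal is correct and follows the paper's own argument essentially verbatim: the paper likewise notes that $\mathfrak{R}$ is separated and complete in the finite topology, applies Proposition~\ref{P:topologies-2} to obtain a surjection $R/J\to\mathfrak{R}/V$ for each open ideal $V$, and concludes via Theorem~\ref{T:rjperfect}. The extra detail you supply (commutativity making the base ideals two-sided, and quotients of commutative perfect rings being perfect via Proposition~\ref{P:perfect}) is exactly what the paper leaves implicit.
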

 \begin{proof}
 Recall that the topology of $\mathfrak{R}$ is given by the annihilators of finitely generated submodules of $K$, so that $\mathfrak{R}=\End_R(K)$ is separated and complete in its topology.
 Let  $V$ be an open ideal in the topology of $\mathfrak{R}$. By Proposition~\ref{P:topologies-2} there is $J\in \G$ and a surjective ring homomorphism $R/J\to \mathfrak{R}/V$. By Theorem~\ref{T:rjperfect} $R/J$ is a perfect ring and thus so  are the quotient rings $\mathfrak{R}/V$. \end{proof}

\section{$\D_\G$ is enveloping if and only if $\mathfrak{R}$ is pro-perfect}\label{S:properfect}
Suppose that $u:R \to U$ is a commutative flat injective ring epimorphism where $\pdim_R U \leq 1$ and denote $K = U/R$. In this section we show that if the endomorphism ring $\mathfrak{R} = \End_R(K)$ is pro-perfect, then $\D_\G$ is enveloping in $\ModR$. So combining with the results in the Section~\ref{S:enveloping} we obtain that $\D_\G$ is enveloping if and only if $\pdim U \leq 1$ and $\mathfrak{R}$ is pro-perfect.

Recall that if $\pdim U \leq 1$, $(\A, \D_\G)$ denotes the $1$-tilting cotorsion pair associated to the $1$-tilting module $U \oplus K$.
The following theorem of Positselski is vital for this section. 
\begin{thm}\label{T:pos-addK}(\cite[Theorem 13.3]{BP4})
Suppose $R$ is a commutative ring and $u:R \to U$ a flat injective ring epimorphism with $\pdim_R U \leq 1$. Then the topological ring $\mathfrak{R}=\End(K)$ is pro-perfect if and only if $\varinjlim \Add(K) = \Add(K)$. 
\end{thm}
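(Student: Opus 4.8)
The plan is to prove the two implications separately, after reducing pro-perfectness of $\mathfrak{R}$ to a property of its discrete quotient rings. By Proposition~\ref{P:ringiso} there is a topological ring isomorphism $\mathfrak{R}\cong\Lambda(R)=\varprojlim_{J\in\G}R/J$, so $\mathfrak{R}$ is separated and complete with a base of open two-sided ideals; hence $\mathfrak{R}$ is pro-perfect if and only if every discrete quotient ring $\mathfrak{R}/V$ is perfect, and by Proposition~\ref{P:topologies-2} (together with Lemma~\ref{L:discreteiso}, which gives $\mathfrak{R}/J\mathfrak{R}\cong R/J$) each $\mathfrak{R}/V$ is a ring quotient of some $R/J$ with $J\in\G$. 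The bridge between the two sides of the statement is the category of $u$-contramodules: using $K\otimes_RU=0$ and $\Tor^R_1(K,U)=0$ together with the hom--tensor adjunction and the $\Ext$-injection recalled in the Preliminaries, one checks that $\Hom_R(K,-)$ sends every $R$-module to a $u$-contramodule, and, following Positselski (cf.\ \cite{Pos} and \cite[Section~16]{BP2}), that it restricts to an equivalence
\[
\Hom_R(K,-)\colon \Add_R(K)\ \mapr{\ \sim\ }\ \{\text{projective }\mathfrak{R}\text{-contramodules}\},
\]
sending $K\mapsto\mathfrak{R}$ and $K^{(\alpha)}\mapsto\mathfrak{R}[[\alpha]]=\varprojlim_V(\mathfrak{R}/V)^{(\alpha)}$, with quasi-inverse a contratensor product $K\odot_{\mathfrak{R}}-$; Lemmas~\ref{L:endcontra} and~\ref{L:rjcontra} ensure the objects in sight are genuinely $u$-contramodules. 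The hypothesis $\pdim_RU\le1$ is used here to guarantee that $U\oplus K$ is $1$-tilting and that $(\A,\D_\G)$ is the associated $1$-tilting cotorsion pair, so that the two functors behave as in classical tilting theory.

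For ``$\mathfrak{R}$ pro-perfect $\Rightarrow\varinjlim\Add(K)=\Add(K)$'', let $(M_i)_{i\in I}$ be a direct system in $\Add_R(K)$ with colimit $M=\varinjlim_iM_i$ in $\ModR$. Since $\Add_R(K)\subseteq\E_\G$ and $\G$ is finitely generated, $\E_\G$ is closed under direct limits, so $M$ is $\G$-torsion; in particular every element of $M$ lies in a finitely generated submodule annihilated by an open ideal. Computing $\Hom_R(K,M)$ along the finitely generated submodules of $K$ — where filtered colimits are exact — one identifies it with the reduced direct limit in the contramodule category of the projective contramodules $\Hom_R(K,M_i)$; such a reduced direct limit is a flat $\mathfrak{R}$-contramodule, and all its reductions modulo open ideals $V$ are direct limits of projective $\mathfrak{R}/V$-modules, hence projective since $\mathfrak{R}/V$ is perfect \cite{Bass}. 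By the characterization of pro-perfect topological rings \cite[Section~19]{BP2}, a flat $\mathfrak{R}$-contramodule all of whose reductions are projective is itself projective; transporting back through the equivalence yields $M\in\Add_R(K)$.

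For the converse, fix an open ideal $V$; by \cite{Bass} it suffices to show that every flat $\mathfrak{R}/V$-module $F$ is projective. By Lazard's theorem write $F=\varinjlim_jF_j$ with $F_j$ finitely generated projective $\mathfrak{R}/V$-modules; lifting idempotents along the complete ring $\mathfrak{R}\twoheadrightarrow\mathfrak{R}/V$, each $F_j$ is the reduction modulo $V$ of a finitely generated projective $\mathfrak{R}$-contramodule, which through the equivalence has the form $\Hom_R(K,N_j)$ for some $N_j\in\add_R(K)$, and the transition maps lift. Put $N=\varinjlim_jN_j$ in $\ModR$: by hypothesis $N\in\Add_R(K)$, so $\Hom_R(K,N)$ is a projective $\mathfrak{R}$-contramodule, and computing levelwise (as $N$ is $\G$-torsion) its reduction modulo $V$ is $F$. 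Hence $F$ is projective over $\mathfrak{R}/V$, so $\mathfrak{R}/V$ is perfect; as $V$ was an arbitrary open ideal, $\mathfrak{R}$ is pro-perfect.

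The main obstacle is the compatibility of direct limits across the equivalence $\Add_R(K)\simeq\{\text{projective }\mathfrak{R}\text{-contramodules}\}$: contramodule categories lack exact filtered colimits, so ``$\varinjlim\Add(K)=\Add(K)$'' must be matched not with a naive colimit of contramodules but with closure under reduced direct limits, and one must verify that $\Hom_R(K,-)$ — which in general does not commute with direct limits in $\ModR$ — nevertheless transports the relevant colimits correctly. This works precisely because $K$ is a discrete (equivalently $\G$-torsion) module, so $\Hom_R(K,-)$ is computed along the finitely generated submodules of $K$, each killed by an open ideal, where colimits behave well; the finiteness of the Gabriel topology (finitely generated, though not finite) and the levelwise reassembly over $\varprojlim_V$ are both used. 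A secondary technical point is the identification, for $N\in\Add_R(K)$, of the reduction of $\Hom_R(K,N)$ modulo an open ideal with a projective module over the perfect ring $\mathfrak{R}/V\cong R/J$, which rests on Lemma~\ref{L:discreteiso} and on the flatness of $U$ over $R$.
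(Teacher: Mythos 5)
First, a point of comparison that matters: the paper does not prove this statement at all --- it is quoted verbatim as an external result, namely \cite[Theorem 13.3]{BP4}, and everything in Section~\ref{S:properfect} is built on top of it as a black box. So there is no internal proof to match your argument against; what you have written is an attempted reconstruction of Positselski's theorem itself, using exactly the machinery (the equivalence $\Hom_R(K,-)\colon \Add_R(K)\to\{\text{projective }\mathfrak{R}\text{-contramodules}\}$, reduced direct limits, reduction modulo open ideals) that \cite{BP2} and \cite{BP4} use to prove it. In that sense your route is the intended one, but be aware that your citation of ``the characterization of pro-perfect topological rings \cite[Section~19]{BP2}'' for the step ``a flat $\mathfrak{R}$-contramodule all of whose reductions are projective is itself projective'' is importing essentially the entire technical content of the forward implication; as a self-contained proof this is close to circular, and as a reading of the literature it is fine but should be flagged as such.

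There is one concrete gap in your converse direction. After writing a flat $\mathfrak{R}/V$-module $F$ as a Lazard colimit $\varinjlim_j F_j$ of finitely generated projectives over an arbitrary directed poset, you lift each $F_j$ and each transition map along $\mathfrak{R}\twoheadrightarrow\mathfrak{R}/V$ and assert that this produces a direct system of finitely generated projective contramodules. Lifting the objects (idempotent lifting over the complete ring) and lifting each individual map (projectivity) are both fine, but the lifted maps need not compose compatibly, so in general you do not obtain a direct system upstairs; this is the standard obstruction to lifting filtered diagrams along a surjection. The repair is to avoid Lazard altogether and use Bass's Theorem~P directly: to show $\mathfrak{R}/V$ is perfect it suffices to verify the descending chain condition on principal ideals, equivalently that the countable telescope $\varinjlim(\mathfrak{R}/V\xrightarrow{\,\cdot a_1\,}\mathfrak{R}/V\xrightarrow{\,\cdot a_2\,}\cdots)$ is projective for every sequence $(a_n)$. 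A chain indexed by $\mathbb{N}$ has no compatibility constraints beyond consecutive maps, so it always lifts to a chain $K\to K\to\cdots$ in $\add_R(K)$ (via the identification $\mathfrak{R}/V$ as a quotient of $R/J\cong\mathfrak{R}/J\mathfrak{R}$ from Lemma~\ref{L:discreteiso} and Proposition~\ref{P:topologies-2}); the hypothesis $\varinjlim\Add(K)=\Add(K)$ applied to this countable telescope then gives projectivity of the reduction, and Bass concludes. With that replacement the converse is sound; the forward direction is sound modulo the heavy external input noted above.
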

A second crucial result that we will use is the following.
\begin{thm}\label{T:Enochs2} (\cite[Theorem 2.2.6]{Xu}) Assume that $\C$ is a class of modules closed under direct limits and extensions. If a module
$M$ admits a special $\C^{\perp_1}$-preenvelope with cokernel in $\C$, then $M$ admits a $\C^{\perp_1}$-envelope.
\end{thm}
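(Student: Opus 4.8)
The plan is to follow Enochs and Xu: start from the given special preenvelope and prune it down to a minimal one by a transfinite descent, using the closure of $\C$ under direct limits (and extensions) precisely to guarantee that the limit stages of the descent still produce special preenvelopes, and a cardinality bound to force termination. First I would fix notation: write $\B=\C^{\perp_1}$ and let $(\A,\B)$ be the (hereditary) cotorsion pair generated by $\C$, so $\C\subseteq\A$. Let $0\to M\to B_0\to C_0\to 0$, $\mu_0\colon M\to B_0$, $\pi_0\colon B_0\to C_0$, be the given special $\B$-preenvelope with $C_0\in\C$. By Proposition~\ref{P:Xu-env} it suffices to exhibit \emph{some} special $\B$-preenvelope of $M$ that happens to be a $\B$-envelope, so the goal becomes to build one that is ``minimal'' among special $\B$-preenvelopes of $M$ with cokernel in $\C$, and then to argue that minimality forces the envelope property.

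For the descent itself: suppose $\mu_i\colon M\to B_i$ is a special $\B$-preenvelope with $C_i:=\Coker\mu_i\in\C$ which is \emph{not} an envelope. By Lemma~\ref{L:identity-env} there is $g_i\in\Hom_R(C_i,B_i)$ with $f_i:=\id_{B_i}-g_i\pi_i$ fixing $\mu_i(M)$ pointwise and not an automorphism of $B_i$; by Lemma~\ref{L:endomorph-env}, $f_i(B_i)\supseteq\mu_i(M)$ and $\Ker f_i\cap\mu_i(M)=0$. One then replaces $\mu_i$ by a strictly ``smaller'' special $\B$-preenvelope $\mu_{i+1}\colon M\to B_{i+1}$ extracted from $f_i$ — the correct choice (a submodule/summand of $B_i$ containing $\mu_i(M)$, together with the pushout correction needed to keep the term to the left of $M$ and to present the new cokernel as an extension built from $C_i\in\C$) being the delicate bookkeeping — so that $C_{i+1}\in\C$ again and a well-founded invariant strictly decreases. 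At a limit ordinal $\lambda$ one sets $\mu_\lambda\colon M\to B_\lambda:=\varinjlim_{i<\lambda}B_i$ along the transition maps; since $M$ is fixed and $\C$ is closed under direct limits, $0\to M\to B_\lambda\to\varinjlim_{i<\lambda}C_i\to 0$ is exact with $\varinjlim_{i<\lambda}C_i\in\C$, and $B_\lambda\in\C^{\perp_1}$ follows from an Eklof-type argument along the continuous chain using that $(\A,\B)$ is hereditary and each $C_i\in\A$. Thus $\mu_\lambda$ is again a special $\B$-preenvelope with cokernel in $\C$, so the recursion is well defined; since at successor steps a cardinal bounded in terms of $\lvert B_0\rvert$ strictly decreases, it must stabilize, and a stable $\mu$ admits no non-automorphism endomorphism fixing $\mu(M)$, i.e.\ it is a $\B$-envelope of $M$.

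The hard part will be the limit step together with the choice of the descent invariant: one must arrange the successor step so that it genuinely strictly decreases a well-founded quantity while keeping each $B_i\in\B$ and each $C_i\in\C$, and then verify $B_\lambda=\varinjlim_{i<\lambda}B_i\in\C^{\perp_1}$. That last verification is exactly where the hypothesis on $\C$ is used in full strength: closure under direct limits gives $\varinjlim C_i\in\C$, while closure under extensions (plus heredity of the generated cotorsion pair) is what propagates the vanishing of $\Ext^1_R(X,-)$ for $X\in\C$ through the continuous chain to the limit. The same T-nilpotency philosophy that underlies Theorem~\ref{T:Xu-sums} is what makes such transfinite descents terminate, and it is this termination plus Proposition~\ref{P:Xu-env} that converts ``minimal special preenvelope'' into ``envelope''.
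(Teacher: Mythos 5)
This statement is quoted by the paper from Xu \cite[Theorem 2.2.6]{Xu} without proof, so there is no in-paper argument to match; judged on its own, your outline has the right general shape (a transfinite chain of special $\C^{\perp_1}$-preenvelopes with cokernels in $\C$, direct limits at limit ordinals, stabilization), but it contains a genuine gap at exactly the point where the theorem is hard. You claim that at a limit ordinal $B_\lambda=\varinjlim_{i<\lambda}B_i$ lies in $\C^{\perp_1}$ ``by an Eklof-type argument along the continuous chain using that $(\A,\B)$ is hereditary.'' Eklof's lemma controls $\Ext^1_R(X,N)$ along a filtration of the \emph{first} argument $X$; what you need here is the vanishing of $\Ext^1_R(X,-)$ for $X\in\C$ along a direct limit in the \emph{second} argument, and $\Ext^1_R(X,-)$ does not commute with direct limits (nor is the cotorsion pair $({}^{\perp_1}(\C^{\perp_1}),\C^{\perp_1})$ hereditary in general — the paper's hereditary statement concerns the $\Ext^\infty$-orthogonal $\C^\perp$, not $\C^{\perp_1}$). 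Keeping the limit term inside $\C^{\perp_1}$ is the crux of Enochs--Xu's proof, and it is achieved there by a concrete pushout/direct-summand analysis exploiting that the connecting maps fix $\mu(M)$ and that the cokernels remain in $\C$ (this is where closure under extensions enters), not by Eklof's lemma. As written, your limit step does not go through.

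Two further points would also need repair. First, the successor step — producing a strictly ``smaller'' special preenvelope with cokernel still in $\C$ from a non-automorphism $f_i$ fixing $\mu_i(M)$ — is precisely the content of the construction, and you leave it entirely unspecified; note also that Proposition~\ref{P:Xu-env} presupposes that an envelope of $M$ exists, so it cannot be used for the reduction you invoke it for. Second, the termination argument cannot be ``a cardinal bounded by $\lvert B_0\rvert$ strictly decreases at each successor step'': a strictly decreasing sequence of cardinals is finite, which would make the transfinite machinery pointless. In Enochs--Xu the process stabilizes because an \emph{increasing} chain of submodules (eventual kernels, respectively the T-nilpotency phenomenon behind Theorem~\ref{T:Xu-sums}) of a fixed module must stop, and the passage from ``the chain stabilizes'' to ``every endomorphism fixing $\mu(M)$ is an automorphism'' is done in two separate stages (first injectivity, then surjectivity), each requiring its own argument.
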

 We now show that if $\mathfrak{R}$ is pro-perfect, then $\Add (K)$ does in fact satisfy the conditions of Theorem~\ref{T:Enochs2}. From Theorem~\ref{T:pos-addK} $\Add(K)$ is closed under direct limits. 
  Moreover, $\Add(K)$ is  closed under extensions as any short exact sequence $0 \to L \to M \to N \to 0$ with $L, N \in \Add(K)$ splits.

As the cotorsion pair $(\A, \D_\G)$ is complete, every $R$-module $M$ has an injective $\D_\G$-preenvelope, and as $\D_\G = K^\perp = (\Add (K))^\perp$, $M$ has a $(\Add (K))^\perp$-preenvelope. It remains to be seen that every $M$ has a special preenvelope $\nu$ such that $\Coker \nu \in \Add (K)$, which we will now show.

\begin{lem}\label{L:coker-in-AddK}
Suppose $u:R \to U$ is a flat injective ring epimorphism where $\pdim_R U \leq 1$. Let $(\A, \D_\G)$ be the $1$-tilting cotorsion pair associated to the $1$-tilting module $U \oplus K$. Then every module has a special $\D_\G$-preenvelope $\nu$ such that $\Coker \nu \in \Add (K)$.
\end{lem}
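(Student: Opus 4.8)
The plan is to construct the preenvelope by a \emph{single} universal extension along $K$, exploiting the fact that $K$ itself belongs to the $1$-tilting class $\D_\G$. Given an $R$-module $M$, set $X:=\Ext^1_R(K,M)$ and, via the canonical isomorphism $\Ext^1_R(K^{(X)},M)\cong\prod_{x\in X}\Ext^1_R(K,M)$, let $\xi=(\xi_x)_{x\in X}$ be the element with $\xi_x=x$ for every $x\in X$. Choose a short exact sequence
\[
0\to M\overset{\nu}\to N\to K^{(X)}\to 0
\]
representing $\xi$. Then $\nu$ is a monomorphism and $\Coker\nu=K^{(X)}\in\Add(K)$; since $K\in\A$ and $\A$, being the left-hand class of a cotorsion pair, is closed under direct sums and direct summands, we get $\Coker\nu\in\A={}^{\perp_1}\D_\G$. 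So it remains only to check that the middle term $N$ lies in $\D_\G$, i.e.\ that $\nu$ is genuinely a $\D_\G$-preenvelope; granting this, $\nu$ will be the desired special $\D_\G$-preenvelope with cokernel in $\Add(K)$, which moreover is exactly the shape needed to invoke Theorem~\ref{T:Enochs2}.

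To verify $N\in\D_\G$ I would apply $\Hom_R(K,-)$ to the sequence above, obtaining the exact row
\[
\Hom_R(K,K^{(X)})\overset{\partial}\to\Ext^1_R(K,M)\to\Ext^1_R(K,N)\to\Ext^1_R(K,K^{(X)}).
\]
The last term vanishes: $K^{(X)}\in\D_\G$ because $\D_\G$ is a torsion class containing $K$, and $K\in\A$, so $\Ext^1_R(K,K^{(X)})=0$ by the cotorsion pair $(\A,\D_\G)$. The connecting map $\partial$ is surjective by the choice of $\xi$: for each $x\in X$ the canonical inclusion $\iota_x\colon K\hookrightarrow K^{(X)}$ of the $x$-th summand satisfies $\partial(\iota_x)=\iota_x^{\ast}(\xi)=\xi_x=x$. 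Hence $\Ext^1_R(K,N)=0$. Since $K$ is a direct summand of the $1$-tilting module $U\oplus K$ we have $\pdim_R K\leq 1$, so $K^{\perp_1}=K^{\perp}=\D_\G$, giving $N\in\D_\G$ as wanted.

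I do not expect any real obstacle here: the argument is essentially the Eklof--Trlifaj construction specialised to a situation where one step suffices, the $1$-tilting hypothesis being precisely what forces $\Ext^1_R(K,K^{(X)})=0$ and thus removes the need for any transfinite induction. The only points requiring a little care are that indexing by the set $X=\Ext^1_R(K,M)$ is legitimate (it is, since $\Ext$-groups are sets) and the standard identification of the connecting homomorphism $\partial$ with pullback of the extension class $\xi$. One could alternatively quote the general Eklof--Trlifaj theorem to get special $\D_\G$-preenvelopes and then refine the cokernel, but the direct construction above produces a cokernel in $\Add(K)$ immediately and is shorter.
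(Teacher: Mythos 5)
Your construction is correct, but it is genuinely different from the paper's. The paper argues by pushout: take a free presentation $R^{(\alpha)}\twoheadrightarrow M$ and push out the $\alpha$-th copower of $0\to R\to U\to K\to 0$ along it; the pushout $Z$ is an epimorphic image of $U^{(\alpha)}$, hence lies in $\Gen(U)=\D_\G$ automatically, and $0\to M\to Z\to K^{(\alpha)}\to 0$ is the desired special preenvelope. You instead perform one step of the Bongartz/Eklof--Trlifaj universal-extension construction along $K$: the extension of $M$ by $K^{(X)}$ classified by the diagonal element of $\Ext^1_R(K^{(X)},M)\cong\prod_{X}\Ext^1_R(K,M)$, together with surjectivity of the connecting map and the vanishing $\Ext^1_R(K,K^{(X)})=0$ (legitimate, since $K\in\Add(U\oplus K)=\A\cap\D_\G$ and $\D_\G$ is closed under direct sums), forces $\Ext^1_R(K,N)=0$. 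The only step you pass over too quickly is the identification $K^{\perp_1}=\D_\G$: a priori $\D_\G=(U\oplus K)^{\perp_1}=U^{\perp_1}\cap K^{\perp_1}$, and the missing inclusion $K^{\perp_1}\subseteq U^{\perp_1}$ comes from applying $\Hom_R(-,N)$ to $0\to R\to U\to K\to 0$ and using $\Ext^1_R(R,N)=0$; this is exactly the equality $\D_\G=K^{\perp}$ that the paper records in the discussion immediately preceding the lemma, so it is available, but it deserves a line of justification rather than being attributed solely to $\pdim_RK\leq 1$. As for what each route buys: the paper's pushout is more elementary and yields membership of the middle term in $\D_\G$ for free, exploiting the specific sequence $0\to R\to U\to K\to 0$; your argument is the general tilting-theoretic one, relying only on $K\in\A\cap\D_\G$, $\pdim_RK\leq1$ and $K^{\perp_1}=\D_\G$, and so it would apply verbatim in any situation where the enveloping class is the right $\Ext$-orthogonal of a single module of projective dimension at most one lying in both halves of the cotorsion pair.
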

\begin{proof}
Take an $R$-module $M$ and consider the canonical surjection $R^{(\alpha)} \overset{p}\to M \to 0$. For every cardinal $\alpha$ the short exact sequence $0 \to R^{(\alpha)} \to U^{(\alpha)} \to K^{(\alpha)} \to 0$ is a $\D_\G$-preenvelope and is of the desired form. Consider the following pushout $Z$ of $M \gets R^{(\alpha)} \to U^{(\alpha)}$. 
\[\xymatrix{
&0 \ar[d]& 0  \ar[d]& & \\
&\ker p \ar[d] \ar@{=}[r]& \ker p \ar[d] & & \\
0 \ar[r]&R^{(\alpha)} \ar[r] \ar[d]^p& U^{(\alpha)} \ar[r] \ar[d]& K^{(\alpha)} \ar@{=}[d] \ar[r] & 0\\
0 \ar[r]&M\ar[r] \ar[d] & Z \ar[r] \ar[d]& K^{(\alpha)} \ar[r] & 0\\
&0&0 &&}
\]
The module $Z$ is in $\Gen U=\D_\G$, and so the bottom row of the above diagram is a $\D_\G$-preenvelope of $M$ of the desired form. \\
\end{proof}
The following theorem follows easily from the above discussion.
\begin{thm} \label{T:divenv}
Suppose $u:R \to U$ is a flat injective ring epimorphism with $\pdim_R U \leq 1$. If the topological ring $\mathfrak{R}$ is pro-perfect, then $\D_\G$ is enveloping in $\RMod$.
\end{thm}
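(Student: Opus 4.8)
The plan is to deduce this from the Enochs--Xu criterion Theorem~\ref{T:Enochs2}, applied with $\C=\Add(K)$. Concretely, I would verify three things: that $\Add(K)$ is closed under direct limits, that it is closed under extensions, and that every $R$-module admits a special $\D_\G$-preenvelope whose cokernel lies in $\Add(K)$; combined with the identity $(\Add K)^{\perp_1}=\D_\G$, Theorem~\ref{T:Enochs2} then yields a $\D_\G$-envelope of every module, i.e.\ $\D_\G$ is enveloping in $\RMod$.

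The closure of $\Add(K)$ under direct limits is exactly where the hypothesis on $\mathfrak{R}$ is used: since $\pdim_R U\le 1$ and $\mathfrak{R}=\End_R(K)$ is pro-perfect, Positselski's Theorem~\ref{T:pos-addK} gives $\varinjlim\Add(K)=\Add(K)$. Closure under extensions holds because any short exact sequence $0\to L\to M\to N\to 0$ with $L,N\in\Add(K)$ splits (as recorded in the discussion preceding the theorem; this ultimately rests on $\Ext^1_R(K,K)=0$, which follows from $\pdim_R K\le 1$ together with $\Ext^1_R(K,U)\cong\Ext^1_U(K\otimes_R U,U)=0$), whence $M\in\Add(K)$. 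Finally $\pdim_R K\le1$ gives $(\Add K)^{\perp_1}=K^{\perp_1}=K^{\perp}=\D_\G$ by Remark~\ref{R:pdU=1}.

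For the preenvelope hypothesis I would invoke Lemma~\ref{L:coker-in-AddK}: it produces, for every $R$-module $M$, a monomorphism $\nu\colon M\to Z$ with $Z\in\D_\G$ and $\Coker\nu\in\Add(K)$. Since $K\in\A={}^{\perp_1}\D_\G$ and hence $\Add(K)\subseteq\A$, this $\nu$ is a special $\D_\G$-preenvelope of $M$ whose cokernel lies in $\C=\Add(K)$ --- precisely the input required by Theorem~\ref{T:Enochs2} (taken with $\C=\Add K$, so that $\C^{\perp_1}=\D_\G$). Applying that theorem, $M$ admits a $\D_\G$-envelope; as $M$ was arbitrary, $\D_\G$ is enveloping.

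Since every ingredient has already been assembled, there is no genuine obstacle remaining in this last step; the substantive work lies in Positselski's Theorem~\ref{T:pos-addK} and in Lemma~\ref{L:coker-in-AddK}. The only point demanding care is bookkeeping: one must keep the class $\C$ of Theorem~\ref{T:Enochs2} equal to $\Add(K)$ (and not, say, $\A$), so that the cokernel condition supplied by Lemma~\ref{L:coker-in-AddK} matches the hypothesis "cokernel in $\C$" verbatim while $\C^{\perp_1}$ is still $\D_\G$.
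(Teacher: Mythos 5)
Your proposal is correct and follows essentially the same route as the paper: invoke Theorem~\ref{T:pos-addK} for closure of $\Add(K)$ under direct limits, note closure under extensions by splitting, use Lemma~\ref{L:coker-in-AddK} for the special preenvelopes with cokernel in $\Add(K)$, and conclude via Theorem~\ref{T:Enochs2} since $\D_\G=\Add(K)^\perp$. The bookkeeping point you flag (taking $\C=\Add(K)$ rather than $\A$) is exactly how the paper proceeds.
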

\begin{proof}
 From Theorem~\ref{T:pos-addK} and Lemma~\ref{L:coker-in-AddK}, $\Add (K)$ does satisfy the conditions of Theorem~\ref{T:Enochs2}. Thus the conclusion follows, since $\D_\G=\Add(K)^\perp$.\end{proof}
 
Finally combining the above theorem with the results in Section~\ref{S:tilting-enveloping} and Section~\ref{S:enveloping} we obtain the two main results of this paper.

\begin{thm}\label{T:characterisation}
Suppose $u:R \to U$ is a commutative flat injective ring epimorphism, $\G$ the associated Gabriel topology and $\mathfrak R$ the topological ring $\End_R(K)$. The following are equivalent.
\begin{enumerate}
\item[(i)] $\D_\G$ is enveloping.
\item[(ii)] $R/J$ is a perfect ring for every $J\in \G$.
\item[(iii)] $\mathfrak{R}$ is pro-perfect.
\end{enumerate} 
It follows that $\pdim U \leq 1$. 
If $\D_\G$ is enveloping then the class $\Add (K)$ is closed under direct limits.
\end{thm}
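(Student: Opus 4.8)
The plan is to prove the statement by assembling results from Sections~\ref{S:tilting-enveloping}--\ref{S:properfect} into the cycle (i) $\Rightarrow$ (ii) $\Rightarrow$ (iii) $\Rightarrow$ (i). First I would dispose of the trivial case: if $u$ is an isomorphism then $K=0$, $\mathfrak{R}=\End_R(K)=0$, $\D_\G=\ModR$, and all three conditions hold with $\pdim_R U=0$; so assume $K\neq 0$ from now on. I would work (as in the rest of the section) with $\pdim_R U\le 1$ available — this is what makes Theorems~\ref{T:divenv} and~\ref{T:pos-addK} applicable, and in any case it is forced by~(i) through Proposition~\ref{P:pd1}.

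The implication (i) $\Rightarrow$ (ii) is exactly Theorem~\ref{T:rjperfect}. For (ii) $\Rightarrow$ (iii): by Proposition~\ref{P:ringiso} and the discussion in Section~\ref{S:compl-endK} the ring $\mathfrak{R}\cong\Lambda(R)$ is commutative, separated and complete, with a base of open (hence two-sided) ideals. Given any open ideal $V$ of $\mathfrak{R}$, Proposition~\ref{P:topologies-2} yields some $J\in\G$ and a surjective ring homomorphism $R/J\to\mathfrak{R}/V$; since $R/J$ is perfect by~(ii) and a quotient ring of a commutative perfect ring is again perfect (it remains a finite product of local rings with $T$-nilpotent maximal ideals, cf.\ Proposition~\ref{P:perfect}), $\mathfrak{R}/V$ is perfect. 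Thus every discrete quotient ring of $\mathfrak{R}$ is perfect, i.e.\ $\mathfrak{R}$ is pro-perfect. For (iii) $\Rightarrow$ (i) I would invoke Theorem~\ref{T:divenv}.

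This closes the cycle, giving (i) $\Leftrightarrow$ (ii) $\Leftrightarrow$ (iii). (One also gets (iii) $\Rightarrow$ (ii) directly: for finitely generated $J\in\G$ the ideal $J\mathfrak{R}$ is open — by Remark~\ref{R:topologies} and Lemma~\ref{L:discreteiso}, $\Lambda(R)J$ is the kernel of the projection $\Lambda(R)\to R/J$ — so $R/J\cong\mathfrak{R}/J\mathfrak{R}$ is a discrete quotient ring of the pro-perfect $\mathfrak{R}$, hence perfect; a general $J\in\G$ contains a finitely generated member of $\G$ since $\G$ is finitely generated, so $R/J$ is a quotient of a perfect ring.) For the final assertions: $\pdim_R U\le 1$ holds under each of~(i)--(iii) — under~(i) it is Proposition~\ref{P:pd1} — and, since~(iii) holds and $\pdim_R U\le 1$, Positselski's Theorem~\ref{T:pos-addK} gives $\varinjlim\Add(K)=\Add(K)$, i.e.\ $\Add(K)$ is closed under direct limits.

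The point needing genuine care — the step I expect to be the real obstacle if one wants the statement for an arbitrary flat injective ring epimorphism, not within the section's standing hypothesis — is that Theorems~\ref{T:divenv} and~\ref{T:pos-addK} presuppose $\pdim_R U\le 1$, whereas Proposition~\ref{P:pd1} delivers it only from~(i). To legitimise (ii) $\Rightarrow$ (i) (and (iii) $\Rightarrow$ (i)) in that generality one would first have to show that~(ii) forces $\pdim_R U\le 1$. The natural route: under~(ii) every $J\in\G$ lies in only finitely many maximal ideals (as $R/J$ is then a finite product of local rings), so the localisation arguments of Section~\ref{S:enveloping} apply without the enveloping hypothesis and give $K\cong\bigoplus_{\m}K_\m$ with each $K_\m$ indecomposable; one reduces to the local case and shows $\pdim_{R_\m}U_\m\le 1$ by writing $U_\m$ as a countable direct limit of copies of $\Hom_{R_\m}(J_n,R_\m)$ along a cofinal chain of finitely generated $J_n\in\G(\m)$, the telescope short exact sequence then bounding the projective dimension by~$1$. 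The substantive mathematical content behind the theorem, however, is not this bookkeeping but Positselski's equivalence (Theorem~\ref{T:pos-addK}) and the Enochs--Xu envelope criterion (Theorem~\ref{T:Enochs2}) underlying Theorem~\ref{T:divenv}.
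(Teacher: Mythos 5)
Your cycle and the results you invoke coincide exactly with the paper's own proof: (i)$\Rightarrow$(ii) via Proposition~\ref{P:pd1} and Theorem~\ref{T:rjperfect}, (ii)$\Rightarrow$(iii) via Lemma~\ref{L:discreteiso} and Proposition~\ref{P:topologies-2} together with the fact that quotients of commutative perfect rings are perfect, and (iii)$\Rightarrow$(i) via Theorem~\ref{T:divenv}; and you correctly isolate the delicate point, namely that Theorems~\ref{T:divenv} and~\ref{T:pos-addK} presuppose $\pdim_R U\le 1$, which Proposition~\ref{P:pd1} only delivers from (i). Your parenthetical direct argument for (iii)$\Rightarrow$(ii) is a sensible addition (it is what makes (iii)$\Rightarrow$(i) self-contained once one knows that (ii) forces $\pdim_R U\le1$), though note that Remark~\ref{R:topologies} only gives $\Lambda(R)J\subseteq W(J)$; to get openness of $J\mathfrak{R}$ you still need to observe that the induced surjection $\Lambda(R)/\Lambda(R)J\twoheadrightarrow\Lambda(R)/W(J)$ is a surjective endomorphism of the cyclic module $R/J$ (using Lemma~\ref{L:discreteiso} and surjectivity of $\pi_J$), hence an isomorphism.

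The genuine gap is your proposed route for ``(ii) implies $\pdim_R U\le 1$''. The paper does not prove this implication at all: it is attributed to Positselski via private communication, as a generalisation of \cite[Theorem 6.13]{BP1} (see Remark~\ref{R:pos-ref}), and it is a substantive contramodule-theoretic result, not bookkeeping. Your telescope sketch does not substitute for it. First, you give no reason why $\G(\m)$ should admit a countable cofinal chain of finitely generated ideals: perfectness of the rings $R/J$ gives that each $J$ lies in only finitely many maximal ideals, but no countability of the filter. Second, even granting such a chain $J_1\supseteq J_2\supseteq\cdots$ and $U_\m\cong\varinjlim_n\Hom_{R_\m}(J_n,R_\m)$, the telescope exact sequence only yields $\pdim U_\m\le 1+\sup_n\pdim\Hom_{R_\m}(J_n,R_\m)$, and these Hom modules are not projective in general. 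Third, the decomposition $K\cong\bigoplus_\m K_\m$ in Section~\ref{S:enveloping} is proved there using envelopes (the summands $X(\m)$ come from $\D_\G$-envelopes of the $R/\m$ and Theorem~\ref{T:Xu-sums}), so asserting that ``the localisation arguments apply without the enveloping hypothesis'' needs a separate argument (one exists, since under (ii) every cyclic $\G$-torsion module is a module over a commutative perfect ring and so splits into finitely many local pieces, but you must say this). As written, your proof establishes (ii)$\Rightarrow$(i) and (iii)$\Rightarrow$(i) only under the standing hypothesis $\pdim_R U\le 1$ of Section~\ref{S:properfect}; the unconditional statement rests on Positselski's cited result, not on your sketch.
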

\begin{proof}
(i)$\Rightarrow$(ii) Follows by Proposition~\ref{P:pd1} and Theorem~\ref{T:rjperfect}.

(ii)$\Rightarrow$(iii) Follows from Lemma~\ref{L:discreteiso} and Proposition~\ref{P:topologies-2}.

(iii)$\Rightarrow$(i) Follows from Theorem~\ref{T:divenv}.

That (i) implies $\pdim U \leq 1$ is Proposition~\ref{P:pd1}.

That (ii) implies $\pdim U \leq 1$ is a result of Positselski via private communication. The proof is a generalisation of \cite[Theorem 6.13]{BP1}.
\end{proof}

\begin{rem}\label{R:pos-ref}
In the original version of this paper, the assumption that $\pdim R_\G\leq 1$ was included on the right hand side of the equivalence in Theorem~\ref{T:tilting-envelope}. This has been removed as it was pointed out by Leonid Positselski (via private correspondence) that if the $R/J$ are perfect rings for $J \in \G$ and $\G$ is a perfect Gabriel topology, it follows that $\pdim R_\G \leq 1$. His proof is a generalisation of \cite[Theorem 6.13]{BP1}.
\end{rem}

 \begin{thm}\label{T:tilting-envelope} Assume that $T$ is a $1$-tilting module over a commutative ring $R$ such that the class $T^\perp$ is enveloping, and let $\G$ be the associated Gabriel topology of $\T$. Then we have the following equivalence.
  \begin{equation*}
  \T \text{ is enveloping} \Leftrightarrow \begin{cases}
    R/J \text{ is a perfect ring for each } J \in \G \\
    \G \text{ is a perfect Gabriel topology}
  \end{cases}
\end{equation*}
That is, there is a flat injective ring epimorphism $u\colon R\to U$ such that $\pdim U \leq 1$ and $U\oplus U/R$ is equivalent to $T$.
 \end{thm}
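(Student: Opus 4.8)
The plan is to reduce the statement to the characterisation already established for flat injective ring epimorphisms, Theorem~\ref{T:characterisation}, using Proposition~\ref{P:tilting-env} to produce the ring epimorphism in the first place. For ``$\Rightarrow$'', assume $\T=\D_\G$ is enveloping, where $\G$ is the faithful finitely generated Gabriel topology attached to $T$ by Theorem~\ref{T:Hrb-tilting}. Then Proposition~\ref{P:tilting-env} applies: $\G$ arises from a perfect localisation, so $\G$ is a perfect Gabriel topology and, setting $U:=R_\G$ and $u:=\psi_R\colon R\to R_\G$, the map $u$ is a flat injective ring epimorphism with associated Gabriel topology $\G$. Hence $(R,U,\G)$ is precisely the data of Theorem~\ref{T:characterisation}; since condition~(i) there holds, so does condition~(ii), i.e.\ $R/J$ is perfect for every $J\in\G$, and moreover $\pdim_R U\le1$. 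Because $\pdim_R U\le1$, Remark~\ref{R:pdU=1} (resting on \cite{AS} and \cite[Proposition~5.4]{H}) shows that $U\oplus K$, with $K=U/R$, is a $1$-tilting module with $(U\oplus K)^\perp=\Gen U=\D_\G=\T$, so $U\oplus K$ and $T$ are equivalent; this yields the two right-hand conditions together with the concluding assertion.

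For ``$\Leftarrow$'', assume $\G$ is a perfect Gabriel topology and $R/J$ is perfect for each $J\in\G$. Since $\G$ is perfect, $\psi_R\colon R\to R_\G$ is a flat ring epimorphism with associated Gabriel topology $\G$, and as $\G$ is faithful (being the topology of a $1$-tilting class, Theorem~\ref{T:Hrb-tilting}) this map is injective; put $u:=\psi_R$, $U:=R_\G$. We are again in the hypotheses of Theorem~\ref{T:characterisation}, whose condition~(ii) holds by assumption, so condition~(i) holds: $\D_\G$ is enveloping. Since $\T=\D_\G$ by Theorem~\ref{T:Hrb-tilting}, $\T$ is enveloping, and exactly as above $\pdim_R U\le1$ and $U\oplus K$ is equivalent to $T$.

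So the proof is essentially bookkeeping on top of Theorem~\ref{T:characterisation}. The one genuinely substantial ingredient buried inside it is the implication that ``$R/J$ is perfect for all $J\in\G$'' together with ``$\G$ is a perfect Gabriel topology'' forces $\pdim_R R_\G\le1$; on the enveloping side this is Proposition~\ref{P:pd1}, and on the perfect-quotients side it is Positselski's result recorded in Remark~\ref{R:pos-ref} (generalising \cite[Theorem~6.13]{BP1}), so I would simply invoke these. The only point needing care is to verify, in both directions, that the triple $(R,R_\G,\G)$ meets the hypotheses of Theorem~\ref{T:characterisation} --- in particular that ``$\G$ is a perfect Gabriel topology'' is exactly ``$\psi_R$ is a flat ring epimorphism with associated Gabriel topology $\G$'' --- and that the chain $\T=\D_\G=\Gen R_\G=(R_\G\oplus K)^\perp$ is legitimate, which is just Hrbek's theorem combined with Remark~\ref{R:pdU=1}.
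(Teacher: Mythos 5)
Your proposal is correct and follows essentially the same route as the paper: Proposition~\ref{P:tilting-env} produces the flat injective ring epimorphism $\psi_R\colon R\to R_\G$ in the forward direction, Theorem~\ref{T:characterisation} gives both implications, and Remark~\ref{R:pos-ref} (resp.\ Proposition~\ref{P:pd1}) supplies $\pdim R_\G\le 1$. The extra bookkeeping you supply (faithfulness of $\G$ forcing injectivity of $\psi_R$, and the identification $\T=\D_\G=\Gen R_\G=(R_\G\oplus K)^\perp$ via Remark~\ref{R:pdU=1}) is exactly what the paper leaves implicit.
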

\begin{proof} ($\Rightarrow$) By Proposition~\ref{P:tilting-env}, the Gabriel topology $\G$ associated to $T^\perp$ arises from a perfect localisation. Moreover, $\psi\colon R \to R_\G$ is injective so by setting $U=R_\G$ we can apply Theorem~\ref{T:characterisation} to conclude.\\
($\Leftarrow$)One applies Theorem~\ref{T:characterisation} to conclude that $\T$ is enveloping.
\\
The last statement follows by Remark~\ref{R:pos-ref}. 
\end{proof}

The following is an application of Theorem~\ref{T:tilting-envelope}, which allows us to characterise all the $1$-tilting cotorsion pairs over a commutative semihereditary ring (for example, for the category of abelian groups). 

\begin{expl}
Let $R$ be a semihereditary ring and $(\A, \T)$ a $1$-tilting cotorsion pair in $\ModR$ with associated Gabriel topology $\G$. Then by \cite[Theorem 5.2]{H}, $\G$ is a perfect Gabriel topology. Moreover, $R/J$ is a coherent ring for $J \in \G$, so $R/J$ is a perfect ring if and only if it is artinian \cite[Theorem 3.3 and 3.4]{Ch}. As $R/J$ is artinian, it has only finitely many (finitely generated) maximal ideals and the Jacobson radical of $R/J$ is a nilpotent ideal. Therefore in this case, $\G$ has a subbasis of ideals of the form $\{\m^{k} \mid \m \in \mSpec R \cap \G, k \in \bbN \}$ and moreover all the maximal ideals of $R$ contained in $\G$ are finitely generated.


In particular, in the case of $R = \bbZ$, every $1$-tilting class $\T$ is enveloping as $\bbZ$ is semihereditary and for any proper ideal $a\bbZ$ of $\bbZ$, $\bbZ/a\bbZ$ is artinian. 

\end{expl}
The following is an example of a ring $R$ and $1$-tilting class $\T$ such that $R$ has a $\T$-envelope but $\T$ is not enveloping. 
\begin{expl}\label{Ex:T-not-env}
Let $R$ be a valuation domain with valuation $v$ and valuation group $\Gamma(R)= \bbR$, and an idempotent maximal ideal $\m= <r_n \in R \mid v(r_n)= 1/n, n \in \bbZ^{>0}>$ (see \cite[Section II.3]{FS} for details on valuation rings). Then as $Q$ is generated by $a_n^{-1}$ with $v(a_n) = n$, it follows that the field of quotients $Q$ of $R$ is countably generated and therefore of projective dimension at most one. Thus $Q \oplus Q/R$ is a $1$-tilting module and the associated Gabriel topology is made up of the principal ideals generated by the non-zero elements of $R$. Moreover, the following is a $\T$-envelope of $R$.
\[
0 \to R \to Q \to Q/R \to 0
\]
However, we claim that $\T$ is not enveloping. If $\T$ is enveloping, then by Theorem~\ref{T:characterisation} $R/sR$ is a perfect ring for each regular element $s$ in $R$. By \cite[Theorem 4.4 and Proposition 4.5]{BSa}, $R$ must be a discrete valuation domain. However, by assumption $R$ is not noetherian as $\m$ is countably generated, a contradiction. 
\end{expl}

\sectionmark{For non-injective flat ring epimorphisms}
\section{The case of a non-injective flat ring epimorphism}\label{S:notmono}
\sectionmark{For non-injective flat ring epimorphisms}
Now we extend the results of the previous section to the case of a non-injective flat ring epimorphism $u\colon R \to U$ with $K=\Coker u$.

As before, the Gabriel topology $\G_u=\{J\leq R\mid JU=U\}$ associated to $u$ is finitely generated and the class
\[\D_{\G_u}=\{M\in \ModR \mid JM=M \text{ for every } J\in\G_u\}
\]
  of $\G_u$-divisible modules is a torsion class. Moreover, by \cite{AHHr} it is a silting class, that is there is a silting module $T$ such that $\Gen(T)=\D_{\G_u}$.

The ideal $I$ will denote the kernel of $u$ and $\overline R$ the ring $R/I$ so that there is a flat injective ring epimorphism $\overline u\colon \overline R\to U$. 
  
     To $\overline{u}$, one can associate the Gabriel topology $\G_{\overline u}=\{L/I\leq \overline R\mid LU=U, I \subseteq L\}$ on $\overline{R}$ and the following class of $\overline{R}$-modules.
      \[\D_{\G_{\overline u}}=\{M\in \Mod{\overline R} \mid (L/I)M=M,  \text{ for every } L/I\in\G_{\overline u}\} \]
That is, we have that if $J \in \G_u$, then $J+I/I \in \G_{\overline u}$, and conversely if $L/I \in \G_{\overline u}$, $L \in \G_u$.

We first note the following.
\begin{lem}\label{L: I-annih-div} Every module in $\D_{\G_u}$ is annihilated by $I$, thus $\D_{\G_u}=\D_{\G_{\overline u}}$.
\end{lem}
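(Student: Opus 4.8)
The plan is to first recognise the kernel $I$ as a $\G_u$-torsion module, and then to annihilate it against divisible modules by means of Lemma~\ref{L:G-top-facts}(i). Because $u$ is a flat ring epimorphism, the description of the torsion radical recalled in Subsection~\ref{SS:homological} (taken from \cite[Chapter XI.3, Proposition 3.4]{Ste75}) tells us that for any $R$-module $M$ the submodule $t_{\G_u}(M)$ is precisely the kernel of the canonical map $M\to M\otimes_R U$. Specialising to $M=R$ and using the identification $R\otimes_R U\cong U$, under which this canonical map becomes $u$ itself, we obtain $t_{\G_u}(R)=\ker u=I$. In particular $I$ is a $\G_u$-torsion $R$-module, and since it is an ideal it is an $R$-$R$-bimodule.

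Next, I would take an arbitrary $M\in\D_{\G_u}$ and apply Lemma~\ref{L:G-top-facts}(i) to the $\G_u$-torsion module $I$ and the $\G_u$-divisible module $M$, which gives $I\otimes_R M=0$. Since $IM$ is the image of the multiplication homomorphism $I\otimes_R M\to M$, it follows that $IM=0$; thus every module of $\D_{\G_u}$ is annihilated by $I$ and hence is canonically an $\overline R=R/I$-module.

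It then remains to deduce $\D_{\G_u}=\D_{\G_{\overline u}}$ from the correspondence of Gabriel topologies noted just before the statement, namely that $J\in\G_u$ exactly when $(J+I)/I\in\G_{\overline u}$, and $L/I\in\G_{\overline u}$ exactly when $L\in\G_u$. For $M\in\D_{\G_u}$ and $L/I\in\G_{\overline u}$ one has $L\in\G_u$, so $LM=M$; as $IM=0$ the action of $L/I$ on $M$ agrees with that of $L$, whence $(L/I)M=M$ and $M\in\D_{\G_{\overline u}}$. Conversely, any $M\in\D_{\G_{\overline u}}$ is an $\overline R$-module, so $I$ acts as zero on it, and for $J\in\G_u$ we get $JM=(J+I)M=((J+I)/I)M=M$, so $M\in\D_{\G_u}$.

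The computation is entirely routine; the only step that genuinely uses a hypothesis, and hence the one I would regard as the crux, is the identification $t_{\G_u}(R)=I$, which rests on $u$ being a flat (homological) ring epimorphism — for an arbitrary ring epimorphism the torsion radical need not have this transparent description.
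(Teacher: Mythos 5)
Your proof is correct and follows essentially the same route as the paper: both arguments hinge on identifying $I=\Ker u$ with the $\G_u$-torsion submodule of $R$ and then using that a $\G_u$-torsion module annihilates a $\G_u$-divisible one. The only cosmetic difference is that the paper carries out this last step elementwise (for $b\in I$ pick $J\in\G_u$ with $bJ=0$, so $bM=bJM=0$), whereas you invoke Lemma~\ref{L:G-top-facts}(i) in tensor form, which encapsulates the same computation.
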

\begin{proof} Note that $\Ker u=I$ is the $\G_u$-torsion submodule of $R$. Hence for every $b\in I$ there is $J\in \G_u$ such that $bJ=0$.
 Let $M\in \D_{\G_u}$, then $bM =bJM=0$, thus $IM=0$. We conclude that $\D_{\G_u}$ can be considered a class in $\Mod{\overline R}$ and coincides with $\D_{\G_{\overline u}}$.
 \end{proof}
 
 \begin{prop}\label{P:same-env} The class  $\D_{\G_u}$ is enveloping in $\ModR$ if and only if $\D_{\G_{\overline u}}$ is enveloping in $\Mod{\overline R}$.
 \end{prop}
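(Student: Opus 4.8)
The plan is to rely entirely on Lemma~\ref{L: I-annih-div}, which gives $\D_{\G_u}=\D_{\G_{\overline u}}$ as one and the same class of modules, sitting inside both $\ModR$ and $\Mod{\overline R}$. Since $R\to\overline R$ is a surjective ring homomorphism, hence a ring epimorphism, $\Mod{\overline R}$ is the full subcategory of $\ModR$ consisting of the modules annihilated by $I$, and for any two $\overline R$-modules the $R$-linear and the $\overline R$-linear homomorphisms between them coincide. In particular $\End_R(D)=\End_{\overline R}(D)$ for every $D\in\D_{\G_u}$, so the minimality condition in the definition of an envelope of a $\D_{\G_u}$-module is insensitive to which of the two module categories one works in. Everything will be a matter of this bookkeeping.

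For the forward implication, assume $\D_{\G_u}$ is enveloping in $\ModR$ and let $M$ be an $\overline R$-module. I would take a $\D_{\G_u}$-envelope $\mu\colon M\to D$ in $\ModR$; since $D\in\D_{\G_u}=\D_{\G_{\overline u}}$, both $D$ and $\mu$ are automatically $\overline R$-linear, and I claim $\mu$ is already a $\D_{\G_{\overline u}}$-envelope of $M$ in $\Mod{\overline R}$. The preenvelope property transfers directly: any $\overline R$-map $M\to D'$ with $D'\in\D_{\G_{\overline u}}$ is an $R$-map, so it factors through $\mu$ by an $R$-map $D\to D'$, which is again $\overline R$-linear. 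Minimality is literally the same statement in both categories by the remark above.

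For the converse, assume $\D_{\G_{\overline u}}$ is enveloping in $\Mod{\overline R}$ and let $N$ be an arbitrary $R$-module. Every $R$-homomorphism from $N$ to a module in $\D_{\G_u}$ kills $IN$, since such modules are annihilated by $I$, hence factors through the canonical projection $\pi\colon N\to N/IN$. So I would take a $\D_{\G_{\overline u}}$-envelope $\mu\colon N/IN\to D$ in $\Mod{\overline R}$ and show that the composite $\mu\pi\colon N\to D$ is a $\D_{\G_u}$-envelope of $N$ in $\ModR$: the preenvelope property follows from the factorization just noted together with the preenvelope property of $\mu$, and for minimality, if $f\in\End_R(D)=\End_{\overline R}(D)$ satisfies $f\circ(\mu\pi)=\mu\pi$, then $f\mu=\mu$ because $\pi$ is an epimorphism, whence $f$ is an automorphism by minimality of $\mu$.

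I do not expect a genuine obstacle here; the proof is purely formal and uses none of the finiteness or projective-dimension hypotheses. The one point that deserves a word of care is that an envelope need not be a monomorphism, so in the converse direction one is composing the envelope of $N/IN$ with the (in general non-injective) projection $N\to N/IN$, and it must be checked that this composite still satisfies both the preenvelope and the minimality conditions — which it does, precisely because $I$ annihilates every $\G_u$-divisible module and $\pi$ is surjective.
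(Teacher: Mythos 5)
Your proof is correct and follows essentially the same route as the paper: in the forward direction you use that every $\G_u$-divisible module is annihilated by $I$ (Lemma~\ref{L: I-annih-div}) so an envelope in $\ModR$ of an $\overline R$-module is already an envelope in $\Mod{\overline R}$, and in the converse direction you compose the envelope of $M/IM$ with the projection $\pi$ and use surjectivity of $\pi$ for minimality, exactly as in the paper. The extra details you supply (coincidence of $R$- and $\overline R$-endomorphism rings, factorization of maps into divisible modules through $\pi$) are just explicit versions of steps the paper leaves implicit.
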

 \begin{proof} Assume that $\D_{\G_u}$ is enveloping in $\ModR$ and let $\overline M\in \Mod{\overline R}$. Consider a $\D_{\G_u}$-envelope 
 $\overline{\mu}\colon \overline M\to D$ in $\ModR$. Since $R\to R/I$ is a ring epimorphism and $D$ is annihilated by $I$ by Lemma~\ref{L: I-annih-div},  it is immediate to conclude that  $\overline{\mu}$ is also a $\D_{\G_{\overline u}}$-envelope of $\overline M$.
 
  Conversely, assume that $\D_{\G_{\overline u}}$ is enveloping in $\Mod{\overline R}$. Take $M\in\ModR$ and let $\overline{\mu}\colon M/IM\to D$ be a $\D_{\G_{\overline u}}$-envelope of $ M/IM$ in $\Mod{\overline R}$.  Let $\pi\colon M\to M/IM$ be the canonical projection. We claim that $\mu=\overline{\mu} \pi$ is a $\D_{\G_u}$-envelope of $M$ in $\ModR$. Indeed, if $f\colon D\to D$ satisfies $f\mu=\mu$, then $f\overline{\mu}\pi =\overline{\mu} \pi$. As $\pi$ is a surjection, $f \overline{\mu} = \overline{\mu}$ and so $f$ is an automorphism of $D$.
  \end{proof}
  
  Note that $\End_R(K)$ coincides with $\End_{\overline R}(K)$ both as a ring and as a topological ring. It will be still denoted by $\mathfrak R$.
   Thus if $\D_{\G_u}$ is enveloping in $\ModR$ we can apply the results of the previous sections to the ring $\overline R$, in particular Theorem~\ref{T:characterisation}.
   
 \begin{thm}\label{T:non-mono} Let $u\colon R\to U$ be a commutative flat ring epimorphism with kernel $I$. Let $\G_u$ be the associated Gabriel topology and $\mathfrak R$ the topological ring $\End_R(K)$. The following are equivalent.
 \begin{enumerate}
\item[(i)] $\D_{\G_u}$ is enveloping.
\item[(ii)]$R/L$ is a perfect ring for every $L\in \G$ such that $L\supseteq I$.
\item[(iii)] $\mathfrak{R}$ is pro-perfect.
\end{enumerate} 
In particular,  $\pdim_{\overline{R}} U \leq 1$ and $U\oplus K$ is a $1$-tilting module over the ring $\overline R$ and since $\Gen(U)$ is contained in $\Mod{\overline R}$, $\D_{\G_u}=\Gen(U)$.
\end{thm}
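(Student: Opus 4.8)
The plan is to reduce the statement to the injective case already settled in Theorem~\ref{T:characterisation}, using the ring $\overline R=R/I$ and the flat injective ring epimorphism $\overline u\colon \overline R\to U$. First I would record that $\mathfrak R=\End_R(K)$ does not change when $K$ is regarded as an $\overline R$-module: since $u$ factors through $\overline R$, the module $U$ is an $\overline R$-module, hence so is $K=U/u(R)$, so $IK=0$; consequently every $R$-linear endomorphism of $K$ is automatically $\overline R$-linear, and the finite topologies agree. Thus $\End_R(K)=\End_{\overline R}(K)$ both as rings and as topological rings, which is why it is legitimate to feed $\mathfrak R$ into Theorem~\ref{T:characterisation} over $\overline R$.

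Next I would combine Proposition~\ref{P:same-env} and Lemma~\ref{L: I-annih-div} with Theorem~\ref{T:characterisation} applied to $\overline u$. This gives the chain: $\D_{\G_u}$ is enveloping in $\ModR$ $\Leftrightarrow$ $\D_{\G_{\overline u}}$ is enveloping in $\Mod{\overline R}$ $\Leftrightarrow$ $\overline R/\bar J$ is a perfect ring for every $\bar J\in\G_{\overline u}$ $\Leftrightarrow$ $\mathfrak R$ is pro-perfect. The only remaining point is the identification of the index sets: the ideals of $\G_{\overline u}$ are exactly those of the form $L/I$ with $L\in\G_u$ and $L\supseteq I$ (noted just before Lemma~\ref{L: I-annih-div}), and $\overline R/(L/I)\cong R/L$; hence the third item in the chain is precisely condition (ii). This establishes the equivalence of (i), (ii) and (iii).

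For the final clause, assuming (i) holds, Theorem~\ref{T:characterisation} over $\overline R$ yields $\pdim_{\overline R}U\leq 1$, so by Remark~\ref{R:pdU=1} the module $U\oplus K$ is a $1$-tilting $\overline R$-module whose tilting class is $\D_{\G_{\overline u}}=\Gen_{\overline R}(U)$. Since every epimorphic image of a direct sum of copies of $U$ is annihilated by $I$ and $R\to\overline R$ is a ring epimorphism, $\Gen_R(U)=\Gen_{\overline R}(U)\subseteq\Mod{\overline R}$; combining with Lemma~\ref{L: I-annih-div} we conclude $\D_{\G_u}=\Gen(U)$.

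I do not expect a genuine obstacle here: the section is an explicit reduction and the work is bookkeeping. The one place that needs a little care is verifying that each ingredient --- the Gabriel topology, the torsion class $\D$, the endomorphism ring with its finite topology, and the class $\Gen U$ --- is really unchanged under passage between $R$ and $\overline R$; each of these is a one-line check using that $R\to\overline R$ is a ring epimorphism and that the relevant modules are $I$-torsion.
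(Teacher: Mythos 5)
Your proposal is correct and follows exactly the paper's route: the paper proves Theorem~\ref{T:non-mono} by the same reduction to $\overline R=R/I$ via Lemma~\ref{L: I-annih-div} and Proposition~\ref{P:same-env}, the identification of $\G_{\overline u}$ with $\{L/I \mid L\in\G_u,\ L\supseteq I\}$, the observation that $\End_R(K)=\End_{\overline R}(K)$ as topological rings, and an application of Theorem~\ref{T:characterisation} over $\overline R$. Your write-up actually supplies slightly more detail (e.g.\ the check that the finite topologies on the two endomorphism rings agree) than the paper, which leaves the proof implicit in the discussion preceding the theorem.
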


As already noted, results from \cite{AHHr} imply that $\Gen(U)$ is a silting class in $\ModR$. Since we have that  $U\oplus K$ is a $1$-tilting module in $\Mod{\overline R}$ inducing the silting class $\Gen(U)$, it is natural to ask the following question.
\begin{ques} Is $U\oplus K$ a silting module in $\ModR$?
\end{ques}

\bibliographystyle{alpha}
\bibliography{references}
\end{document}